\newtheorem*{theorem*}{Theorem}
\newtheorem*{lemma*}{Lemma}
\newcommand{\Bin}{\mathrm{Bin}}
\theoremstyle{plain}
\newtheorem{lemma}{Lemma}
\newtheorem{definition}{Definition}
\newtheorem{remark}{Remark}
\newtheorem{claim}{Claim}
\newcommand{\reals}{\mathbb{R}}
\newcommand{\E}{\mathbb{E}}
\newcommand{\argmin}[1]{\underset{#1}{\mathrm{argmin}}}
\newcommand{\argmax}[1]{\underset{#1}{\mathrm{argmax}}}
\newcommand{\PP}{\mathbb{P}}
\newcommand{\tPP}{\tilde{\PP}_n}
\newcommand{\Ccal}{\mathcal{C}}
\newcommand{\Xcal}{\mathcal{X}}
\newcommand{\Fcal}{\mathcal{F}}
\newcommand{\Hcal}{\mathcal{H}}
\newcommand{\Rcal}{\mathcal{R}}
\newcommand{\Ncal}{\mathcal{N}}
\newcommand{\vol}{\mathrm{vol}}
\newcommand{\K}{\mathcal{K}^{(1)}_d}
\newcommand{\R}{\mathbb{R}}
\renewcommand{\eqref}[1]{Eq.~(\ref{#1})}
\newcommand{\TV}{\mathrm{TV}}
\begin{document}

\begin{frontmatter}

% "Title of the paper"
\title{Optimality of Maximum Likelihood for Log-Concave Density Estimation and Bounded Convex Regression}
\runtitle{The Optimality of MLE }

\author{\fnms{Gil} \snm{Kur}\ead[label=e1]{gilkur@mit.edu}}
\and
\author{\fnms{Yuval} \snm{Dagan}\ead[label=e2]{dagan@mit.edu}}
\and
\author{\fnms{
Alexander} \snm{Rakhlin} \ead[label=e3]{rakhlin@mit.edu}}
\affiliation{Massachusetts Institute of Technology}

\runauthor{Kur et al.}
\begin{abstract}
	In this paper, we study two problems: (1) estimation of a $d$-dimensional log-concave distribution and (2) bounded multivariate convex regression with random design with an underlying log-concave density or a compactly supported distribution with a continuous density. 
	First, we show that for all $d \ge 4$ the maximum likelihood estimators of both problems achieve an optimal risk of $\Theta_d(n^{-2/(d+1)})$\footnote{In the  regression setting, this bound is tight for certain measures, e.g. when the underlying distribution is uniform on a ball. However, for some log-concave measures, the minimax is of order $\Theta_d(n^{-\frac{4}{d+4}})$.} (up to a logarithmic factor) in terms of squared Hellinger distance and $L_2$ squared distance,  respectively. Previously, the optimality of both these estimators was known only for $d\le 3$. We also prove that the $\epsilon$-entropy numbers of the two aforementioned families are equal up to logarithmic factors. We complement these results by proving a sharp bound $\Theta_d(n^{-2/(d+4)})$ on the minimax rate (up to logarithmic factors) with respect to the total variation distance.  
	Finally,  we prove that estimating a log-concave density---even a uniform distribution on a convex set---up to a fixed accuracy requires the number of samples \emph{at least} exponential in the dimension. We do that by improving the dimensional constant in the best known lower bound for the minimax rate from $2^{-d}\cdot n^{-2/(d+1)}$ to $c\cdot n^{-2/(d+1)}$ (when $d\geq 2$).
\end{abstract}
\end{frontmatter}

\section{Introduction} 
\label{sec:1}
Consider the problem of estimating the regression function $f^*$ over the domain $\Xcal\subseteq\reals^d$, based on a set of real-valued observations $Y_i=f^*(X_i)+\xi_i$, $i=1,\ldots,n$, with the knowledge that $f^*\in\Fcal$. Here $X_1,\ldots,X_n\sim \PP$ are independent, and $\xi_1,\ldots,\xi_n$ are i.i.d. and zero mean. The Least Squares (LS) estimator 
$$\widehat{f} \in \argmin{f\in\Fcal}~ \sum_{i=1}^n (Y_i-f(X_i))^2$$
is perhaps the most basic procedure, and its optimality properties are of central interest. 

Similarly, the problem of density estimation corresponds to recovering the unknown density $f^*\in\Fcal$ based on $n$ independent observations from a distribution with this density. The most basic procedure, Maximum Likelihood Estimation (MLE), is defined as
$$\widehat{f} \in \argmax{f\in\Fcal}~ \sum_{i=1}^n \log f(X_i).$$

Analysis of LS and MLE for large non-parametric classes $\Fcal$ rests on uniform laws of large numbers, as quantified by the theory of empirical processes \citep{van2000empirical}. Classical results due to Le Cam and Birg\'e establish minimax rates for regression and density estimation based on notions of ``richness'' of $\Fcal$. In particular, the solution $\epsilon_{*}^2$ to 
\begin{align}
\label{minimax_fixed_pt_lecam}
    \frac{\log \Ncal_2(\Fcal, \epsilon_{*}, \PP)}{n} ~\asymp~ \epsilon_{*}^2
\end{align}
provides, under appropriate conditions, the global minimax rates for estimation with respect to squared $L_2(\PP)$ (for regression) and squared Hellinger (for density estimation) measures of closeness \citep{yang1999information}. Here $\Ncal_2(\Fcal, \epsilon, \PP)$ is a covering number of $\Fcal$ with respect to $L_2(\PP)$ at scale $\epsilon$, defined as the smallest number of functions $f_1,\ldots,f_N\in\Fcal$ such that $\forall f\in\Fcal, \exists j$ s.t. $\|f-f_j\|_{L_2(\PP)}\leq \epsilon$ . 

On the other hand, the work of \citep{birge1993rates} shows that upper bounds on the rate of MLE/LS can be derived by solving
\begin{align}
\label{eq:fixed_pt_dudley_old}
    \frac{1}{\sqrt{n}}\int_{0}^\epsilon \sqrt{\log \Ncal_2(\Fcal,\delta, \PP_n)} d\delta ~\asymp~ \epsilon^2,
\end{align}
where $\PP_n$ denotes the empirical measure with respect to i.i.d. samples $X_1,\ldots,X_n$ drawn from $\PP$. This technique is used to establish minimax optimality of MLE and LS, however it only works in the regime when the Dudley entropy integral in \eqref{eq:fixed_pt_dudley_old} is finite, meaning that the entropy  $\log \Ncal_2(\Fcal, \delta, \PP_n) = o(\delta^{-2})$. The complexity regime with converging integral is termed the ``Donsker regime,'' and the analysis of optimality of LS and MLE in the ``non-Donsker regime'' appears to be on a case-by-case basis. Existence of a general set of geometric conditions sufficient (and perhaps necessary) for establishing optimality of LS/MLE in the non-Donsker regime is still an open area of research in nonparametric statistics. 

There are a few approaches that can be attempted in the non-Donsker regime. First, the standard symmetrization-based analysis yields, under appropriate boundedness assumptions, a (potentially loose) upper bound on the estimation error for LS in terms of the \textit{global} Rademacher averages of $\Fcal$, defined below. One can estimate the global Rademacher averages via entropy numbers with respect to the empirical measure $\PP_n$, but estimates on theses numbers are not always available for the classes of interest. 
%The fixed point \eqref{eq:fixed_pt_dudley_old} captures the ``localization'' phenomenon \citep{koltchinskii2000rademacher,bartlett2005local}. 
The following fixed point has also been used in the literature as an upper bound on the rate of estimation in LS/MLE \citep{kim2016global,han2016multivariate,gardner2006convergence}:
\begin{align}
\label{eq:fixed_pt_dudley_old_non}
    \frac{1}{\sqrt{n}}\int_{\epsilon}^1 \sqrt{\log \Ncal_{[],2} (\Fcal,\delta, \PP)} d\delta ~\asymp~ \epsilon.
\end{align}
Here $\log \Ncal_{[], p}(\Fcal, \delta,\PP)$ denotes the bracketing entropy with respect to $L_p(\PP)$, defined as the smallest number of pairs $(\ell_j, u_j)\in\Fcal\times\Fcal$, $j=1,\ldots,N$, such that $\forall f\in\Fcal, \exists j$ s.t. $\ell_j\leq f\leq u_j$ and $\|u_j-\ell_j\|_{L_p(\PP)} \leq \epsilon$.
One can view the fixed point \eqref{eq:fixed_pt_dudley_old_non} as a (potentially loose) upper bound on the global Rademacher averages of $\Fcal$. Indeed, in the non-Donsker regime, one would replace the $L_2(\PP)$ bracketing numbers with the empirical $L_2(\PP_n)$ entropy for sharp estimates. The fixed point in \eqref{eq:fixed_pt_dudley_old_non} is larger than the fixed point in Eq. (\ref{minimax_fixed_pt_lecam}). Yet, as observed in the seminal work of \cite{birge1993rates}, \eqref{eq:fixed_pt_dudley_old_non} \emph{can} be tight for MLE/LS. %Therefore, establishing minimax optimality of MLE and LS beyond the Donsker regime is a challenging problem \citep{han2019isotonic}. 

Motivated by the work of \cite{schuhmacher2010consistency,carpenter2018near}, we present a rather simple recipe that can yield optimality of MLE and LS for certain families of functions or distributions in the non-Donsker regime\footnote{We acknowledge the work by \cite{han2019global} which appeared few months after our initial manuscript became available on arXiv. From a recent personal communication with the author, some of his results were achieved in his PhD thesis that was available online before our initial manuscript. The author used techniques that are very similar to our approach. }, and then showcase the technique by proving minimax optimality of \emph{bounded} convex regression (when the underlying distribution is log-concave or compactly supported with continuous density\footnote{See Remarks~\ref{Rem1},\ref{Rem2} for more details.}) and log-concave density estimation for $d \geq 4$. Both problems are known to fall in the non-Donsker regime as soon as $d \geq 4$, and optimality of MLE/LS has remained an open problem.
More specifically, we prove that MLE for log-concave density estimation and LS for bounded convex regression  achieve the optimal rates of $O_d(n^{-\frac{2}{d+1}})$, up to a logarithmic factor, in squared Hellinger and squared $L_2$, respectively. 

Our technique is to reduce the MLE/LS problem to a question of uniform convergence over indicators of sets $\Ccal$, the level sets of functions in the class. We establish an upper bound on the expected supremum of the empirical process 
\begin{align}
    \label{eq:sup_emp_proc_C}
    \E\sup_{C\in\Ccal} \left| \PP_n(C)-\PP(C) \right|,
\end{align}
for classes $\Ccal$ beyond the Donsker regime. In contrast to the classical fixed point in \eqref{eq:fixed_pt_dudley_old}, the rate is given by $\epsilon^2$ that solves
\begin{align}
\label{eq:new_fixed_pt}
    \frac{1}{\sqrt{n}}\int_{\epsilon^2}^1 \sqrt{\frac{\log \Ncal_{[],1}(\Ccal,\delta,\PP)}{\delta}} d\delta ~\asymp~ \epsilon^2.
\end{align}
% For sets, the bracketing metric is the measure of the symmetric difference between the two sets.

To illustrate the calculation, the bracketing entropy of a collection of convex subsets of a bounded set at scale $\delta$ can be upper bounded by $\Theta_d\left(\delta^{-\frac{d-1}{2}}\right)$. The above fixed point then yields $\epsilon^2 \asymp n^{-\frac{2}{d+1}}$, the optimal rate for log-concave density estimation and bounded convex regression (when the underlying distribution has an arbitrary log-concave density; see Remark \ref{Rem2}). 

The form of the Dudley entropy integral in \eqref{eq:new_fixed_pt} is derived by a chaining technique with Bernstein tail bounds. The maximal inequality for a collection of Binomial random variables---that is, the empirical process indexed by sets---has a mixture of the sub-Gaussian and sub-Exponential tails, and we ensure that the first tail is dominating by requiring 
$$\frac{\log \Ncal_{[],1}(\Ccal, \epsilon^2)}{n} \lesssim \sqrt{\frac{\epsilon^2 \log \Ncal_{[],1} (\Ccal, \epsilon^2)}{n}}. $$
The fixed point \eqref{eq:new_fixed_pt} is consistent with this requirement when the entropy has a polynomial growth in the non-Donsker regime.

By comparing the minimax rate given by  \eqref{minimax_fixed_pt_lecam} and the fixed point of \eqref{eq:new_fixed_pt} for the polynomial growth of entropy, we observe that the two rates match whenever 
\begin{equation}\label{relation}
    \log \Ncal_{[],1}(\Ccal, \epsilon^2,\PP) \asymp \log \Ncal_{2} (\Fcal, \epsilon, \PP). 
\end{equation}
% \begin{equation}\label{relation}
%     \log \Ncal_{[],2}(\Ccal, \epsilon,\PP) = \log \Ncal_{[],1}(\Ccal, \epsilon^2,\PP) \asymp \log \Ncal_{2} (\Fcal, \epsilon, \PP) \leq \log \Ncal_{[], 2} (\Fcal, \epsilon, \PP) .
% \end{equation}
%In this case, our ``recipe'' for analyzing MLE/LS by passing to level sets yields the optimal rate given by \eqref{minimax_fixed_pt_lecam}.
% Of course, for sets, $\Ncal_{[],1}(\Ccal, \epsilon^2,\PP) = \Ncal_{[],2}(\Ccal, \epsilon, \PP)$ where $\Ncal_{[],2}$ is the bracketing number with respect to $L_2(\PP)$. 
Interestingly, when the relation in \eqref{relation} holds, it implies that the bound \eqref{eq:fixed_pt_dudley_old_non} with $L_2(\PP)$ bracketing numbers is loose. 
%To see this, the usage of this bound is motivated by the \emph{assumption} that the population entropy $\log \Ncal_{[],2} (\Fcal, \epsilon, \PP)$ and the empirical entropy $\log \Ncal_2(\Fcal, \epsilon, \PP_n)$ are similar until the fixed point in \eqref{eq:fixed_pt_dudley_old_non}. Then, this fixed point with high probability is the global Rademacher complexity (up to a dimensional constant) of $\Fcal$. However, in our case, 
Indeed, Lemma \ref{lem:compare_Rad_avg} below implies that the Rademacher complexity of $\Fcal$ is $O(\epsilon_*^2)$, which is significantly smaller than the fixed point given by \eqref{eq:fixed_pt_dudley_old_non}. Thus, we deduce that the empirical entropy $\log \Ncal_{2} (\Fcal, \epsilon, \PP_n)$ \emph{must differ} from the population entropy $\log \Ncal_{2} (\Fcal, \epsilon, \PP)$ at scales above the fixed point of \eqref{eq:fixed_pt_dudley_old_non}. In fact, from Sudakov's minoration and Lemma~\ref{lem:compare_Rad_avg} below, 
$$\sup_{\alpha>0} \alpha \sqrt{\frac{\log \Ncal_2(\Fcal, \alpha, \PP_n)}{n}} \lesssim \epsilon_{*}^2$$
 hence, the two entropies already differ at  scales above $\epsilon_{*}$. 
%whenever \eqref{relation} holds;$$\sup_{\alpha>0} ~\alpha \sqrt{\frac{\log \Ncal_2(\Fcal, \alpha, \PP_n)}{n}} ~\lesssim~ \inf_{\epsilon>0} \left\{ \epsilon^2 + \frac{\log \Ncal_2(\Fcal, \epsilon, \PP)}{n} \right\} ~\asymp~ \epsilon_{*}.$$
To complement this,  \cite[Lemma 9]{rakhlin2017empirical} implies that $\Ncal_2(\Fcal, \alpha, \PP)$ and $\Ncal_2(\Fcal, \alpha, \PP_n)$ behave similarly above the level $c\epsilon_{*}$, whenever the global Rademacher complexity is $O(\epsilon_*^2)$. Thus, we conclude that whenever \eqref{relation} holds, the empirical and the population entropy numbers behave similarly the until $\epsilon_{*}$ (for a detailed discussion, see Section \ref{Sec:Disc}).

% As a converse, we conclude that if empirical and population entropies grow similarly until some $\bar\epsilon$ which is significantly smaller than the fixed point of \eqref{eq:new_fixed_pt} (or of \eqref{minimax_fixed_pt_lecam}), then this reduction to the level sets of functions does not yield the optimal rate.

\section{Main Results}

We start with a few definitions. For two probability measures $P$ and $Q$ with densities $p$ and $q$, total variation distance is defined as $d_{\TV}(P,Q) = \frac{1}{2}\int_{\R^d} |p(x)-q(x)|dx = \sup_{C \subset \R^d}|p(C)-q(C)|$. The squared Hellinger metric is defined as
$h^2(P,Q) = \frac{1}{2}\int_{\R^d}\left(\sqrt{p(x)}-\sqrt{q(x)} \right)^2 dx$. It holds that  $h^2 \leq d_{\TV} \leq \sqrt{2 h^2}$. For an estimator $\widehat{f}$ of a density $f^*\in\Fcal$, we define
$$\Rcal_{h^2}(\widehat{f}, \Fcal) = \sup_{f^*\in\Fcal} \E h^2(\widehat{f}, f^*)$$
and define $\Rcal_{\TV}(\widehat{f}, \Fcal)$ accordingly. Here the expectation is with respect to the data $X_1,\ldots,X_n$ from a distribution $\PP$ with density $f^*$. Similarly, for the problem of regression, we define
$$\Rcal_{L^2_2(\PP)}(\widehat{f}, \Fcal) = \sup_{f^*\in\Fcal} \E \left\| \widehat{f}- f^*\right\|_{L_2(\PP)}^2$$
where
$$\left\|f\right\|_{L_2(\PP)}^2 = \int f^2(x) \PP(dx),$$ and the expectation is with respect to $(X_1,Y_1),\ldots,(X_n,Y_n)$. Assumptions on the noise variable $Y$ will be specified in each theorem separately.

We denote by $\mathcal{F}_{d}$ the set of log-concave distributions in $\R^d$ and by $\Hcal_{d,\Gamma}$ the set of convex functions in $\R^d$ bounded by $\Gamma$. We denote a generic estimator by $\bar{f}$, namely a function from $n$-samples to a function. We use the notation $c,C, c',c_1$ for absolute \emph{positive} constants which do not depend on the problem parameters. $ c(d),C(d),c_d,\ldots $ are constants that depend only on the dimension of samples. Uppercase $C$ is used for constants greater than $1$ and lowercase $c$ for constants less than $1$. Throughout the paper these constant \emph{may} change from line to line.

The first set of results addresses log-concave density estimation in Hellinger loss.

\begin{restatable}{theorem}{thmone}
	\label{thm:hellinger}
	Assume that $d\geq 4$. The MLE $\widehat{f}$ achieves the risk
	\[
	\Rcal_{h^2}(\widehat{f}, \Fcal_d) \leq  C(d) \cdot n^{-\frac{2}{d+1}} \cdot \log(n).
	\]
	Furthermore, there exists a universal constant $c > 0$ such that for all $d \geq 2$,
	\begin{equation}\label{Eq:boundsfordim}
	\inf_{\bar{f}}\Rcal_{h^2}(\bar{f}, \Fcal_d)\geq \inf_{\bar{f}}\Rcal_{h^2}(\bar{f}, \Ccal_d) \geq c \cdot n^{-\frac{2}{d+1}},
	\end{equation}
	where $\mathcal{C}_d$ denotes the uniform distributions over the convex sets.
\end{restatable}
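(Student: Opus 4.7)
The plan is to handle the two inequalities of \eqref{Eq:boundsfordim} separately. The leftmost inequality is immediate because uniform distributions on convex bodies are log-concave, so $\Ccal_d \subseteq \Fcal_d$. The real content lies in (i) the MLE upper bound and (ii) the dimension-free lower bound.

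For the upper bound I would follow the general recipe sketched in the introduction. Using standard structural properties of the log-concave MLE---namely that $\widehat f$ is supported on the convex hull of the samples and is bounded (up to a polylogarithmic in $n$ truncation)---one reduces the control of $\E h^2(\widehat f, f^*)$ to the supremum of an empirical process indexed by the truncated superlevel sets $\{x : f(x) \geq t\}$ of log-concave densities:
\[
\sup_{C \in \Ccal}\,\bigl|\PP_n(C) - \PP(C)\bigr|.
\]
Since level sets of log-concave functions are convex, Bronshtein's estimate gives $\log \Ncal_{[],1}(\Ccal, \delta, \PP) \lesssim_d \delta^{-(d-1)/2}$. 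Using the sub-Binomial variance bound $\mathrm{Var}(\mathbf 1_C) \leq \PP(C)$, a chaining argument with Bernstein tails produces the maximal inequality corresponding to the fixed point \eqref{eq:new_fixed_pt}. Solving it with the entropy $\delta^{-(d-1)/2}$ yields $\varepsilon^2 \asymp n^{-2/(d+1)}$; the extra $\log n$ absorbs the peeling step and the truncation level.

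For the lower bound I would construct a large Hellinger-separated family of uniform distributions on convex bodies indexed by a spherical code. Fix the unit ball $B \subset \R^d$ and a cap height $h$. Pick directions $v_1, \ldots, v_m \in S^{d-1}$ with pairwise angular separation exceeding $2\arccos(1-h) \asymp \sqrt h$, so the half-space caps $H_i := B \cap \{x : \inner{x, v_i} \geq 1-h\}$ are pairwise disjoint in $B$; a standard spherical packing supplies $m \asymp h^{-(d-1)/2}$ such directions. For $\sigma \in \{0,1\}^m$ the set
\[
K_\sigma := B \cap \bigcap_{i : \sigma_i = 1}\{x : \inner{x, v_i} \leq 1-h\}
\]
is convex (intersection of $B$ with half-spaces), so its uniform distribution $P_\sigma$ lies in $\Ccal_d$. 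A direct calculation gives per-bit squared Hellinger $h^2(P_\sigma, P_{\sigma'}) \asymp v/|B|$ for neighbors $\sigma,\sigma'$, where $v$ is the cap volume. Choosing $h$ so that $v/|B| \asymp 1/n$---equivalently $h \asymp n^{-2/(d+1)}$---keeps the product-measure Hellinger of $n$ samples bounded away from $1$ for any adjacent pair. A Fano-type argument on a balanced binary code of size $\log N \asymp m \asymp n^{(d-1)/(d+1)}$ then gives a lower bound of order $m \cdot v/|B| \asymp n^{-2/(d+1)}$ on the minimax squared-Hellinger risk.

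The main obstacle in the upper bound is the quantitative reduction from MLE Hellinger loss to the set-indexed process \eqref{eq:sup_emp_proc_C}: one must show that the relevant process is effectively indexed by convex sets (bracketing entropy $\delta^{-(d-1)/2}$), not by the whole class of bounded log-concave densities (entropy $\delta^{-d/2}$), since only the former feeds the fixed point \eqref{eq:new_fixed_pt}. The main obstacle in the lower bound is the universal constant $c$: the previous $2^{-d}$-type bound used only $d$ coordinate-aligned caps, and upgrading to the full spherical code requires careful tracking of the dimension-dependent constants in the sphere-packing count, in the cap-volume formula, and in the Fano/Assouad step so that they cancel in the product $m \cdot v/|B|$ rather than leaving a $2^{-d}$ factor behind.
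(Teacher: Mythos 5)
Your upper-bound sketch follows essentially the same path as the paper: invoke the Carpenter--Diakonikolas reduction (the paper's Eq.~(\ref{Eq:KingDiako})) to pass from MLE Hellinger loss to the convex-set--indexed empirical process, then control that process by the chaining-with-Bernstein maximal inequality (Lemma~\ref{lem:final-chain}) fed by Bronshtein's bracketing bound, which at the fixed point gives $n^{-2/(d+1)}$. The only technical piece you skip is the decomposition of $\R^d$ into shells $A_i$ used in Lemma~\ref{lem:conv-general} to handle unbounded convex sets under a log-concave measure, but that is routine once one has Lemma~\ref{lem:density-decay}.

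The lower bound is where the proposal has a real gap, and it is exactly the constant you flag as the ``main obstacle.'' You propose picking directions $v_1,\dots,v_m$ on $S^{d-1}$ with pairwise angular separation large enough that the caps of height $h$ are \emph{pairwise disjoint}, and you assert $m\asymp h^{-(d-1)/2}$ ``by a standard spherical packing'' with the hope that careful bookkeeping makes the constants cancel. They do not. Disjointness forces the caps to form a strict packing of the sphere, and a strict packing of caps of angular radius $\theta\asymp\sqrt h$ covers only a $2^{-\Theta(d)}$ fraction of the surface; a direct computation shows that the product you need in Assouad's lemma satisfies $m\cdot v/|B|\asymp 2^{-d}\cdot h$, not $\asymp h$. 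Choosing $h$ so that $v/|B|\asymp 1/n$ then gives $m\cdot v/|B|\asymp 2^{-d}\, n^{-2/(d+1)}$, which is precisely the Kim--Samworth bound (\ref{Eq:KimLowerBound}) the theorem is trying to improve. There is no amount of ``careful tracking'' that recovers an absolute constant while keeping the caps disjoint, since the exponential loss is intrinsic to sphere packing. The missing ingredient is the paper's \emph{approximate-packing} construction: start from a near-optimal polytope approximation of $B_d$ by $N$ facets (the paper uses a lemma of Kur~(2017)), whose complementary caps \emph{overlap} but have union of volume $\Theta(N^{-2/(d-1)})\cdot\mathrm{vol}(B_d)$; then apply the greedy pruning argument of Lemma~\ref{lem:cupsets} to extract $\geq cN$ caps, each of which retains at least a constant fraction of its volume private. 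Assouad is then applied to the private volumes, so $K\eta\asymp N\cdot N^{-(1+2/(d-1))}=N^{-2/(d-1)}$ with \emph{absolute} constants. Replacing disjointness by ``constant fraction of private volume'' is exactly what upgrades $2^{-d}$ to $c$, and it is the step your proof does not supply. (The switch from your Fano phrasing to the paper's Assouad phrasing is immaterial; the packing issue is not.)
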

\noindent We believe that the logarithmic factor in the first part of the theorem is redundant. 

We now turn to the problem of bounded convex regression.
\begin{restatable}{theorem}{thm:regression}
	\label{thm:regression}
	Assume that $d\geq 4$, $\PP$ is log-concave and $\xi$ has $2+\epsilon$ moment bounded by $L$ (for some $\epsilon > 0$). Then the following holds for the LS estimator $\widehat{f}$:
    \begin{equation}\label{Eq:rEG}
        	\Rcal_{L^2_2(\PP)} (\widehat{f}, \Hcal_{d,\Gamma}) =  O_{d}(\Gamma \cdot \max\{C_{\xi},\Gamma\} \cdot n^{-\frac{2}{d+1}}).
    \end{equation}
	    where $C_{\xi}:= \int_{0}^{\infty}\sqrt{\Pr(|\xi|>t)}dt \leq L$.
\end{restatable}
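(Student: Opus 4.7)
The plan is to apply the recipe described in the introduction: reduce the analysis of the LS estimator to a \emph{localized} uniform convergence statement over the class $\Ccal$ of convex subsets of $\R^d$. Starting from the basic inequality
\begin{equation*}
    \|\widehat{f} - f^*\|_n^2 \;\leq\; \frac{2}{n}\sum_{i=1}^n \xi_i\bigl(\widehat{f}(X_i) - f^*(X_i)\bigr),
\end{equation*}
the goal is to bound, uniformly over $g = f - f^*$ with $f \in \Hcal_{d,\Gamma}$, both the right-hand side and the defect $\bigl|\|g\|_n^2 - \|g\|_{L_2(\PP)}^2\bigr|$ in terms of $\|g\|_{L_2(\PP)}$ and the target scale $r_n^2 := \Gamma\cdot\max\{C_\xi,\Gamma\}\cdot n^{-2/(d+1)}$. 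A standard peeling argument in $\|g\|_{L_2(\PP)}$ then yields \eqref{Eq:rEG}.

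The key reduction is the layer-cake identity: for $f_1, f_2 \in \Hcal_{d,\Gamma}$ (nonnegative after a harmless shift),
\begin{equation*}
    f_1(x) - f_2(x) \;=\; \int_{0}^{\Gamma}\bigl(\mathbf{1}\{f_2(x) \leq t\} - \mathbf{1}\{f_1(x) \leq t\}\bigr)\,dt,
\end{equation*}
and each sublevel set $\{f_i \leq t\}$ is convex. By Fubini, together with (for the quadratic defect) a symmetrization and a Talagrand contraction step $g\mapsto g^2$ that uses $|g|\leq 2\Gamma$, both processes above reduce to localized suprema of
\begin{equation*}
    \bigl|\PP_n(C)-\PP(C)\bigr|
\qquad\text{and}\qquad
    \Bigl|\tfrac{1}{n}\sum_i\xi_i\mathbf{1}\{X_i\in C\}\Bigr|,
\qquad C \in \Ccal,
\end{equation*}
weighted by the local radius $\sqrt{\PP(C)}\vee r_n$.

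To bound these localized suprema, I would feed the bracketing entropy estimate $\log\Ncal_{[],1}(\Ccal,\delta,\PP)\lesssim_d \delta^{-(d-1)/2}$, standard for convex sets under a log-concave reference, into the new fixed point \eqref{eq:new_fixed_pt} of the introduction; its solution is exactly $\epsilon^2\asymp n^{-2/(d+1)}$. The Bernstein--chaining argument sketched after \eqref{eq:new_fixed_pt} then delivers the localized bound for the centered empirical process over $\Ccal$. For the noise-weighted supremum, a layer-cake bound $\E\bigl[|\xi|\mathbf{1}_C\bigr]\leq C_\xi\sqrt{\PP(C)}$ combined with the same chaining scheme (with $\xi_i$ in place of Rademacher signs) produces the factor $C_\xi$, so that the two suprema together contribute the factor $\max\{C_\xi,\Gamma\}$ appearing in \eqref{Eq:rEG}.

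The main obstacle is running the chaining for the $\xi_i$-weighted process under only a $(2+\epsilon)$-moment assumption on the noise: the Bernstein tail for sums of bounded indicators that drives \eqref{eq:new_fixed_pt} in the binomial case does not transfer directly to $\sum\xi_i\mathbf{1}_C(X_i)$. The standard remedy is a truncation $\xi_i = \xi_i\mathbf{1}\{|\xi_i|\leq\tau\} + \xi_i\mathbf{1}\{|\xi_i|>\tau\}$ at a level $\tau$ calibrated to $r_n$: the bounded part inherits the Bernstein--chaining bound with variance proxy $\sigma_C^2\lesssim \tau^2\,\PP(C)$, while the unbounded tail is controlled in expectation through the $L_{2,1}$-type integral defining $C_\xi$. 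A secondary, essentially routine, step is verifying the $\delta^{-(d-1)/2}$ bracketing estimate for $\Ccal$ under an arbitrary log-concave $\PP$, which follows from the standard bounded-density structure of log-concave laws on their effective support.
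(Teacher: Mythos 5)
Your proposal is correct in spirit and reaches the right rate by the same basic reduction the paper uses (layer-cake decomposition to level sets, bracketing entropy $\delta^{-(d-1)/2}$ for convex sets, Bernstein--chaining), but the surrounding machinery differs in two notable ways. First, the paper forgoes localization and peeling entirely: since the problem is in the non-Donsker regime, the localized noise and quadratic-defect suprema already saturate at the global scale $n^{-2/(d+1)}$, so peeling buys nothing; the paper instead starts from the basic LS inequality, controls $\sup_h \tfrac1n\sum\xi_i h(X_i)$ globally (Lemma~\ref{corr:noise}) and $\sup_{f,g}|\PP_n(f-g)^2-\PP(f-g)^2|$ globally (Lemma~\ref{cor:Gen}), and adds them. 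If you trace your peeling through, the lower shells do not shrink and you recover the same global bound, so your route works but is heavier than necessary. Second, for the noise process you propose a bespoke truncation to push chaining through under the $(2+\epsilon)$-moment assumption; the paper instead invokes a standard multiplier inequality (van der Vaart--Wellner, Lemma~2.9.1) that directly converts the Rademacher average to the $\xi_i$-weighted one at cost $C_\xi=\int_0^\infty\sqrt{\Pr(|\xi|>t)}\,dt$. Your truncation plan is essentially a re-derivation of that inequality and should go through, but citing the multiplier lemma makes the argument cleaner and lets all of the chaining live on the Rademacher side where the Bernstein tails are immediate. The contraction step for the quadratic defect and the Fubini/layer-cake reduction to $\sup_{C}|\PP_n(C)-\PP(C)|$ match the paper's Lemmas~\ref{lem:compare_Rad_avg} and~\ref{cor:Gen}.
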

The proof of this theorem appears in Section~\ref{sec:convex-reg}.
\begin{remark}\label{Rem1}
	We can remove the assumption of $\PP$ being log-concave on $\R^d$ at the expense of assuming that $\PP$ is supported on a bounded set and has a continuous density function. Theorem \ref{thm:regression} then holds with constant $C(\PP,d)$. Unfortunately, our proof does not extend to fixed design. 
		
		The recent result in \cite[Theorem 1]{han2019convergence}, implies that we may reduce the bounded $2+\epsilon$ moment of $\xi$ to only  $\frac{d+1}{d-1}+ \epsilon$ bounded moment, for some $\epsilon > 0$. 
\end{remark}
\begin{remark}\label{Rem2}
    Observe that the bound in \eqref{Eq:rEG} is tight, and it is achieved, for example, when the underlying distribution is uniform on the $d$-dimensional Euclidean ball.
    However, this bound is not optimal for every log-concave distribution. For example, when the distribution is uniform on the $d$-dimensional cube, the MLE achieves a risk of at most $O_d(n^{-2/d})$, as was observed in \citep{han2016multivariate}. For a further discussion, see Section~\ref{Sec:Prior}.
\end{remark}
\begin{remark}
    For the uniform bound on the ball, we can find the explicit dimensional constant in \eqref{Eq:rEG}. Specifically, if $\Gamma =1$ and $\xi \sim N(0,1)$
    \[
                	\Rcal_{L^2_2(\textrm{Unif}(B_d))} (\widehat{f}, \Hcal_{d,1}) \leq  Cd  \cdot n^{-\frac{2}{d+1}}.
    \]
\end{remark}
Finally, we give optimal (up to logarithmic factors) bounds for the $\epsilon$-entropy numbers of the almost isotropic log-concave distributions. Let $\Fcal_{d,\tilde{I}}$ denote the set of almost isotropic log-concave densities, namely,  the class of log-concave distributions with norm of the mean bounded by small absolute constant, and covariance satisfying $I/2 < \Sigma < 2I$. Let $\Ncal_{h}$ and $\Ncal_{\TV}$ denote the $\epsilon$-covering with respect to the Hellinger and total variation distance (equiv., $L_1(\PP)$), respectively.
\begin{restatable}{theorem}{Thm:Entropy}
	\label{Thm:Entropy}
	For $d\geq 4$ and every $\epsilon > 0$, the following holds:
	\begin{enumerate}
	    \item $\Omega_d(\epsilon^{-(d-1)})\leq  \log \Ncal_{h}(\Fcal_{d,\tilde{I}}, \epsilon) \leq O_d(\epsilon^{-(d-1)}\log(\epsilon^{-1})^{(d+1)(d+2)/2}).$
	    \item $\Omega(\epsilon^{-d/2})\leq \log \Ncal_{\TV}(\Fcal_{d,\tilde{I}}, \epsilon) \leq O_d(\epsilon^{-d/2}\log(\epsilon^{-1})^{d(d+1)/2}).$
	\end{enumerate}
\end{restatable}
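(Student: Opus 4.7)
The plan is to relate covering numbers of log-concave densities to covering numbers of convex objects, with a truncation step to localize the density. The dichotomy is the one suggested by Section~1: $L_\infty$ closeness of log-densities controls total variation (via Bronshtein for convex \emph{functions}), whereas level-set closeness at a \emph{squared} scale controls Hellinger (via Bronshtein for convex \emph{sets}). Lower bounds come from the matching packings.

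\textbf{Truncation.} Standard sub-exponential tail bounds for isotropic log-concave measures yield $\Pr_f[X\notin B_R]\le\epsilon$ for $R=C(d)\log(1/\epsilon)$, so replacing $f$ by its restriction to $B_R$ costs $O(\epsilon)$ in both $h$ and $d_{\TV}$. On $B_R$ the convex function $\phi=-\log f$ is bounded by $M=C(d)\log(1/\epsilon)$ with Lipschitz constant $O(M)$ on compact subsets of $\{\phi<\infty\}$.

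\textbf{TV upper bound.} I apply Bronshtein's theorem for bounded convex functions: the $L_\infty$-cover of convex functions on $B_R$ bounded by $M$ at scale $\epsilon$ has cardinality $\exp(C_d(RM/\epsilon)^{d/2})$. For each representative $\tilde\phi$, set $\tilde f\propto e^{-\tilde\phi}$; then $\|\phi-\tilde\phi\|_\infty\le\epsilon$ gives $d_{\TV}(f,\tilde f)=O(\epsilon)$ after normalization. Substituting $R,M=O(\log(1/\epsilon))$ yields the stated bound $\epsilon^{-d/2}\log(1/\epsilon)^{d(d+1)/2}$, where the polylog exponent aggregates the factors $R^{d/2}$, $M^{d/2}$, and the Lipschitz correction.

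\textbf{Hellinger upper bound.} Here I use the superlevel-set decomposition $f(x)=\int_0^{\|f\|_\infty}\mathbf{1}\{f(x)\ge t\}\,dt$, whose level sets $A_t:=\{f\ge t\}$ are convex by log-concavity. I discretize $t$ on a geometric grid $t_k=2^{-k}\|f\|_\infty$ down to $t_K\asymp\epsilon^2$, yielding $K=O(\log(1/\epsilon))$ levels, and approximate each $A_{t_k}$ in Lebesgue symmetric difference by an element of a single $\epsilon^2$-cover of convex subsets of $B_R$. Bronshtein's theorem for convex sets gives cover cardinality $\exp(C_d\,\epsilon^{-(d-1)})$, and enumerating chains of length $K$ picks up only a $K$-factor, consistent with the polylog in the statement. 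To convert level-set approximation into Hellinger approximation, I use $(\sqrt a-\sqrt b)^2\le(a-b)^2/\max(a,b)$ together with the layer-cake identity to show
\[
h^2(f,\tilde f)\;\le\;\sum_{k=0}^{K-1}\frac{(t_k-t_{k+1})\,|A_{t_k}\triangle\tilde A_{t_k}|}{\sqrt{t_k}}\;+\;O(\epsilon^2),
\]
where the $O(\epsilon^2)$ absorbs the residual tail $\{f<t_K\}$. Each term in the sum is $O(\sqrt{t_k}\cdot\epsilon^2)$ and $\sum_k\sqrt{t_k}$ is a dimension-free constant, giving $h^2(f,\tilde f)=O(\epsilon^2)$.

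\textbf{Lower bounds.} For Hellinger I build $\exp(\Omega_d(\epsilon^{-(d-1)}))$ uniform distributions on a Bronshtein-style packing of convex subsets of $B_R$ at Lebesgue scale $\epsilon^2$, using $h^2(U_A,U_B)\asymp|A\triangle B|$ for sets of comparable volume. For TV I invoke a Bronshtein-style packing of convex \emph{functions} at $L_\infty$-scale $\epsilon$ of cardinality $\exp(\Omega(\epsilon^{-d/2}))$; the corresponding normalized densities $f_i\propto e^{-\phi_i}$ are $\Omega(\epsilon)$-separated in TV.

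\textbf{Main obstacle.} The chief technical step is the Hellinger upper bound: transferring independent $L_1$-approximations of the level sets $A_{t_k}$ into a single Hellinger bound on $f$, uniformly in $k$, while controlling the $1/\sqrt{t}$ amplification at low levels. The geometric gridding of $t_k$ and the truncation threshold $t_K\asymp\epsilon^2$ must be tuned simultaneously so that the per-level contribution and the residual tail each contribute $O(\epsilon^2)$.
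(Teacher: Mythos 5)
Your plan for the truncation step and the TV upper bound matches the paper in spirit (the paper works with the hypograph $K(f_0)\subset\R^{d+1}$ and invokes Bronshtein for convex bodies, which is the set-theoretic incarnation of your $L_\infty$-covering of $-\log f_0$). Your Hellinger lower bound (uniform distributions on a packing of convex bodies) is also the same construction the paper uses in the second part of Theorem~\ref{thm:hellinger}. Two points deserve comment.

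\textbf{Hellinger upper bound: a genuinely different route, with small gaps to fill.} The paper covers the truncated support $D_{f_0,\epsilon}$ using Bronshtein and then, conditionally on each support, invokes Gao--Wellner's Theorem~1.5 on the $L_2$-bracketing entropy of bounded convex functions ($\Theta_d(\epsilon^{-(d-1)})$), converting $L_2$-closeness of log-densities to Hellinger via boundedness. You instead perform a layer-cake decomposition into $K=O(\log(1/\epsilon))$ superlevel sets, each covered at $L_1(\vol)$-scale $\epsilon^2$ via Bronshtein for convex sets, sidestepping the $L_2$ convex-function entropy entirely. This works, and in fact the accounting you propose gives a \emph{smaller} polylogarithmic exponent than the theorem states, because you only pay the convex-set entropy exponent $(d-1)/2$ (at scale $\epsilon^2$) times a $K$-factor. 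But your sketch glosses two things. First, the per-level approximations $\tilde A_{t_k}$ must be \emph{nested} for the piecewise-constant $\tilde f$ and your per-level bound to be compatible; you can enforce this by taking outer brackets $\overline A_k \supset A_{t_k}$ and defining $\tilde A_k := \bigcap_{j\le k}\overline A_j$, which preserves $\vol(\tilde A_k\triangle A_{t_k})\le\vol(\overline A_k\setminus A_{t_k})$. Second, a tail mass of only $\epsilon$ outside $B_R$ is not enough: $h^2(f, f|_{B_R}) \asymp \Pr[X\notin B_R]$, so you need tail mass $\lesssim\epsilon^2$ (the paper actually arranges $\epsilon^3$ so that the $\vol(B_R)$ prefactor is absorbed); likewise the floor level $t_K$ must be taken $\asymp\epsilon^2/\vol(B_R)$ rather than $\epsilon^2$, since the residual $\int_{\{f<t_K\}}f \le t_K\vol(B_R)$. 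Both are easy adjustments that leave $R$ and $K$ of the same order.

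\textbf{TV lower bound: a genuine gap.} From $\|\phi_i-\phi_j\|_\infty\ge\epsilon$ alone you cannot deduce $d_{\TV}(e^{-\phi_i}, e^{-\phi_j})=\Omega(\epsilon)$: two bounded convex functions can be $\epsilon$-separated in $L_\infty$ yet differ only on a set of negligible volume (e.g.\ a thin neighborhood of the boundary of $B_R$), making the corresponding densities arbitrarily close in total variation. You need a packing that is $\Omega(\epsilon)$-separated in $L_1(\vol)$, of cardinality $\exp(\Omega(\epsilon^{-d/2}))$. And there is a second obstruction: a ``Bronshtein-style'' perturbation of a convex function need not remain the negative log of a log-concave density once you add it to the Gaussian exponent and normalize; convexity of the total log-density has to be maintained. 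This is precisely why the paper constructs an explicit family in Lemma~\ref{Lem:concavefunction} -- localized paraboloid bumps $g_{x_i,\delta}$ whose Hessian ($\preceq \tfrac12 I$) is dominated by the Gaussian curvature ($-I$) -- and then verifies the $L_1$ and Hellinger separations directly in Lemma~\ref{lem:alpha-beta}. Your argument would need to be replaced by, or reduced to, such an explicit construction.
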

This theorem is proved in Section~\ref{sec:entropy_proof}.
\begin{remark}
    The first part of the Theorem is based on the following key results in \citep{gao2017entropy}:
    	\begin{enumerate}
	    \item $\log \Ncal_{[],2}(\Hcal_{d,1}, \epsilon,\mathrm{Unif}(B_d)) =\Theta_d(\epsilon^{-(d-1)})$
	    \item $\log \Ncal_{[],1}(\Hcal_{d,1}, \epsilon,\mathrm{Unif}(B_d)) =\Theta_d(\epsilon^{-d/2})$
	\end{enumerate}
	where $B_d$ denotes the $d$-dimensional Euclidean ball and $\mathrm{Unif}$ denotes the uniform distribution.
\end{remark}

We complement Theorem~\ref{thm:hellinger} with a corresponding result in the Total Variation distance. The proof (which  appears in Sections~\ref{sec:TV_lower} and  \ref{sec:TV_upper}), however, uses a technique different from that of  Theorems~\ref{thm:hellinger} and \ref{thm:regression}.
\begin{restatable}{theorem}{thmtwoupper}
	\label{thm:TV}
	Assume that $d \geq 2$, and $n$ is large enough. Then there exists an estimator $\bar{f}_{S}$, such that
	\begin{equation}\label{Eq:UppertvBound}
	\Rcal_{\TV}(\bar{f}_{S},\mathcal{F}_d) \leq  O_d(\log(n)^{\frac{d(d+1)}{d+4}}n^{-\frac{2}{d+4}}).
	\end{equation}
	Furthermore, there exists a universal constant $c > 0$, such that for all $d \geq 2$:
	\begin{equation}\label{Eq:lowerboundTV}
	\inf_{\bar{f}}\Rcal_{\TV}(\bar{f},\mathcal{F}_d) \geq c^{d} n^{-\frac{2}{d+4}}.
	\end{equation}
\end{restatable}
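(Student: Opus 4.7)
My plan is to prove the two parts independently: the upper bound via a Le~Cam/Yatracos minimum-distance estimator fed by the entropy bound of \thmref{Thm:Entropy}(2), and the lower bound via Fano's inequality on a smooth log-concave packing around a Gaussian reference.

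For the upper bound, I first reduce to the almost-isotropic class $\Fcal_{d,\tilde I}$ using affine equivariance: split the sample, compute the empirical mean $\hat\mu$ and covariance $\hat\Sigma$ from the first half, whiten the rest via $x \mapsto \hat\Sigma^{-1/2}(x-\hat\mu)$, and un-whiten the estimated density at the end. Sharp concentration for log-concave laws (Adamczak--Litvak--Pajor--Tomczak, Kim--Samworth) ensures the whitened law lies in $\Fcal_{d,\tilde I}$ with probability $1-o(1)$ once $n\ge n_0(d)$. Then, using \thmref{Thm:Entropy}(2), take an $\epsilon$-net $\Fcal_\epsilon \subset \Fcal_{d,\tilde I}$ of cardinality $\log|\Fcal_\epsilon|=O_d(\epsilon^{-d/2}\log(1/\epsilon)^{d(d+1)/2})$, form the Yatracos class $\Acal_\epsilon=\{\{f>g\}:f,g\in\Fcal_\epsilon\}$, and define
\[
\bar f_S \in \argmin{f\in\Fcal_\epsilon}\sup_{A\in\Acal_\epsilon} |P_n(A)-P_f(A)|.
\]
The Devroye--Lugosi bound gives $d_{\TV}(\bar f_S,f^\star)\le 3\epsilon+2\sup_{A\in\Acal_\epsilon}|P_n(A)-P_{f^\star}(A)|$, and a Bernstein union bound over the $\le|\Fcal_\epsilon|^2$ Yatracos events yields expectation $\lesssim\sqrt{\log|\Fcal_\epsilon|/n}$. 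Balancing $\epsilon^2\asymp\log|\Fcal_\epsilon|/n$ solves to $\epsilon\asymp\log(n)^{d(d+1)/(d+4)}\,n^{-2/(d+4)}$, giving the upper rate in \eqref{Eq:UppertvBound}.

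For the lower bound, fix $f_0=N(0,I_d)$, choose $K\asymp\eta^{-d}$ strictly disjoint Euclidean balls $B_j=B(c_j,\eta)\subset B(0,1)$, and a $C^2$ bump $\psi$ supported in the unit ball with $\int\psi=0$ and bounded Hessian. Setting $\psi_j(x)=\psi(\eta^{-1}(x-c_j))$, for each $\sigma\in\{-1,+1\}^K$ define
\[
f_\sigma(x)\propto\exp\Bigl(-\tfrac{1}{2}\|x\|^2-\delta\sum_{j=1}^{K}\sigma_j\psi_j(x)\Bigr).
\]
Because the $\psi_j$ have disjoint supports, $\nabla^2\log f_\sigma=-I-\delta\sum_j\sigma_j\nabla^2\psi_j$; since each $\nabla^2\psi_j$ has operator norm $O(\eta^{-2})$ on $B_j$ and vanishes elsewhere, $f_\sigma$ is log-concave whenever $\delta\le c_0\eta^2$. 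A Taylor expansion yields $d_{\TV}(f_\sigma,f_{\sigma'})\asymp c_d\,\delta\eta^d\,d_H(\sigma,\sigma')$ and $\mathrm{KL}(f_\sigma\|f_{\sigma'})\lesssim c_d\,\delta^2\eta^d\,d_H(\sigma,\sigma')$, where $c_d\asymp(2\pi)^{-d/2}$ is the Gaussian density on $B(0,1)$ (times $\int\psi^2$). Varshamov--Gilbert extracts a subpacking of size $2^{cK}$ with pairwise $d_H\gtrsim K$, so the TV separation is $\asymp c_d\delta$ and the average KL is $\asymp c_d\delta^2$ (using $\eta^dK\asymp 1$). Fano's constraint $n\,c_d\delta^2\lesssim\log M\asymp\eta^{-d}$ combined with the binding log-concavity bound $\delta=c_0\eta^2$ yields $\eta^{d+4}\asymp 1/(n\,c_d)$ and $\epsilon\asymp c_d\delta\asymp c_d^{(d+2)/(d+4)}n^{-2/(d+4)}=c^d\,n^{-2/(d+4)}$, matching \eqref{Eq:lowerboundTV}.

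The main technical obstacle is ensuring log-concavity of $f_\sigma$ uniformly in $\sigma$. A smooth $C^2$ bump with compact support cannot itself be globally concave, so additive perturbations of a \emph{constant} density fail; the Gaussian reference is essential because its $-I$ Hessian absorbs the perturbation's curvature of order $\delta/\eta^2$, and placing the bumps in strictly disjoint balls reduces global concavity to a per-cell check. A secondary issue in the upper bound is quantifying the TV cost of plug-in whitening, which follows from sharp sample mean/covariance concentration for log-concave laws.
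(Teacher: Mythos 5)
Your upper bound is essentially the paper's own proof: the paper also forms a TV $\epsilon$-net from Theorem~\ref{Thm:Entropy}(2), defines the minimum-distance (``tournament'') estimator over the induced witness sets, controls the empirical deviations by a union bound yielding $\sqrt{\log \Ncal_{\TV}(\epsilon)/n}$, balances to get the rate, and reduces to the almost-isotropic class by the same split-sample whitening argument. Your lower bound, by contrast, is a genuinely different route to the same result. The paper also perturbs a Gaussian by $K\asymp\eta^{-d}$ bumps in disjoint balls, relying on the $-I$ Hessian of $\log\gamma$ to absorb the bump curvature (your observation that this mechanism is essential is exactly the point), but it uses the explicit piecewise-quadratic, \emph{nonnegative} bump $g_{x_0,\delta}(x)=\frac14(\delta-\|x-x_0\|_2)^2$ with $\alpha\in\{0,1\}^K$ and antipodal pairing (Lemma~\ref{Lem:Ballsinbin}) so that the normalization constants are literally identical across hypotheses; it then applies Assouad's lemma for the $h^2$ risk, boosts to TV via the relation $\delta^2 d_{\TV}\asymp h^2$ on the constructed family (Lemma~\ref{lem:alpha-beta}), and finishes with a projection lemma (Lemma~\ref{Lem:gene}) to pass from estimators valued in the discrete family back to arbitrary estimators. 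You instead use a mean-zero $C^2$ bump with $\pm1$ signs, which makes the normalization approximately constant rather than exactly so, extract a Varshamov--Gilbert subpacking, and run Fano directly in TV---one shot instead of three. This is cleaner in that it avoids both the metric-comparison step and the projection lemma, at the small cost of having to verify that the residual normalization variation (which the $\int\psi=0$ condition kills at leading order, leaving an $O(\delta\eta)$ remainder) is negligible against the $\Theta(c_d\delta)$ TV separation. One minor slip worth noting: the TV separation is proportional to $\int|\psi|$, and the KL/Hellinger to $\int\psi^2$; you attached $\int\psi^2$ to the TV constant, which does not affect the rate or the $c^d$ form of the dimensional constant but is worth fixing for the record.
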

\noindent{This theorem shows that the minimax rate of learning a log-concave distribution with respect to total variation distance is $\Theta_d(n^{-2/(d+4)})$, up to a factor $\log(n)^{\frac{d(d+1)}{d+4}}$. Thus, it ``harder" to learn a log-concave distribution with respect to the total variation than the Hellinger squared distance.}

Theorems~\ref{thm:hellinger} and \ref{thm:regression} are based on the following result on empirical processes indexed by sets, proved in Section~\ref{sec:pr-final-chain}. 
%\yuval{replace $\mathcal{N}_{\PP}$ with $\mathcal{N}_{d_\PP}$.}
\begin{lemma} \label{lem:final-chain}
	Let $\PP$ be a probability measure on $\mathbb{R}^d$. Let $\mathcal{A}$ be a family of measurable subsets of $\mathbb{R}^d$ with $\epsilon_0 =\PP(\bigcup \mathcal{A})$. Then for any  $\epsilon \ge 0$ such that $\log \mathcal{N}_{[],1}(\mathcal{A}, \epsilon,\PP) \le \epsilon n /3$,
\begin{equation} \label{eq:35}
\E\left[ \sup_{S \in \mathcal{A}}|\PP_n(S)-  \PP(S)| \right]
\le \epsilon +  \frac{C}{\sqrt{n}}\int_{\epsilon}^{\epsilon_0}\sqrt{\delta^{-1} \log \Ncal_{[],1}(\mathcal{A}, \delta,\PP)}d \delta.
\end{equation}
\end{lemma}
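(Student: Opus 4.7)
The plan is a bracketing-chaining argument tailored to set-indexed empirical processes. Because each $\mathbf{1}_S$ is Bernoulli, Bernstein's inequality produces a mixed sub-Gaussian / sub-Exponential tail, and the hypothesis $\log \Ncal_{[],1}(\mathcal{A},\epsilon,\PP) \leq \epsilon n/3$ is precisely what forces the sub-Gaussian part to dominate at every relevant scale, generating the nonstandard integrand $\sqrt{\log \Ncal_{[],1}/\delta}$ instead of the classical $\sqrt{\log \Ncal_{[],1}}$.

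Setup: I would choose the geometric scales $\delta_k = 2^{-k}\epsilon_0$ and set $K = \max\{k : \delta_k \geq \epsilon\}$. At each scale fix an $L_1(\PP)$-bracketing $\{(L_j^{(k)},U_j^{(k)})\}_{j=1}^{\Ncal_k}$ of $\mathcal{A}$ with $\Ncal_k := \Ncal_{[],1}(\mathcal{A},\delta_k,\PP)$, using the coarsest bracket $[\emptyset, \bigcup \mathcal{A}]$ to ensure $\Ncal_0 = 1$. For $S \in \mathcal{A}$ select a lower bracket $\pi_k(S)$ with $\pi_k(S) \subseteq S \subseteq U^{(k)}(S)$, so $\pi_0(S) = \emptyset$. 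The key geometric fact, coming from $\pi_j(S) \subseteq S \subseteq U^{(j)}(S)$ at each scale, is
\[
\PP(\pi_k(S) \triangle \pi_{k-1}(S)) \leq \delta_k + \delta_{k-1} \leq 2\delta_{k-1},
\]
because each half of the symmetric difference sits inside some $U^{(j)} \setminus L^{(j)}$.

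Decomposition and chaining: using the bracketing,
\[
|(\PP_n-\PP)(S)| \leq |(\PP_n-\PP)(\pi_K(S))| + |(\PP_n-\PP)(U^{(K)}(S) \setminus \pi_K(S))| + 2\delta_K,
\]
and the first term telescopes as $(\PP_n-\PP)(\pi_K(S)) = \sum_{k=1}^K (\PP_n-\PP)(\pi_k(S) \triangle \pi_{k-1}(S))$ against signed indicators. The $k$-th increment takes at most $\Ncal_k \Ncal_{k-1} \leq \Ncal_k^2$ distinct values, each an empirical average of bounded iid random variables with variance $\leq 2\delta_{k-1}$, so the Bernstein maximal inequality yields
\[
\E \sup_{S \in \mathcal{A}} |(\PP_n-\PP)(\pi_k(S) \triangle \pi_{k-1}(S))| \leq C\bigl(\sqrt{\delta_{k-1}\log \Ncal_k/n} + \log \Ncal_k/n\bigr).
\]
Monotonicity of bracketing numbers combined with the hypothesis gives $\log \Ncal_k/n \leq \delta_k/3$ for every $k \leq K$, hence $\log \Ncal_k/n \leq \sqrt{\delta_k \log \Ncal_k/n}/\sqrt{3}$: the sub-Exponential term is absorbed into the sub-Gaussian one at every scale. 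The same maximal inequality applied at scale $K$ to the $\Ncal_K$ sets $U_j^{(K)} \setminus L_j^{(K)}$ controls the residual term and is $O(\epsilon)$.

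Passage to the integral: on $[\delta_{k+1},\delta_k]$ the integrand $\sqrt{\log \Ncal_{[],1}(\mathcal{A},\delta,\PP)/\delta}$ is at least $\sqrt{\log \Ncal_k/\delta_k}$ while the width is $\delta_k/2$, so each Riemann block captures at least $\tfrac12 \sqrt{\delta_k \log \Ncal_k}$; summing,
\[
\sum_{k=1}^K \sqrt{\delta_{k-1}\log \Ncal_k/n} \leq \frac{C}{\sqrt n}\int_{\epsilon}^{\epsilon_0} \sqrt{\log \Ncal_{[],1}(\mathcal{A},\delta,\PP)/\delta}\, d\delta + O(\epsilon),
\]
where the $O(\epsilon)$ absorbs the boundary slice $[\epsilon/2, \epsilon]$ (whose integrand is pointwise at most $\sqrt{n/3}$ by the hypothesis). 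Collecting the chain bound, the residual $U^{(K)}(S) \setminus \pi_K(S)$ contribution, and the $2\delta_K \leq 2\epsilon$ term yields the claimed inequality. The main obstacle I anticipate is not conceptual but bookkeeping: one must verify scale by scale that the assumption at the single scale $\epsilon$, together with monotonicity, absorbs every sub-Exponential contribution and every boundary mismatch into the $\epsilon$ term; once that accounting is completed, Bernstein's maximal inequality plus telescoping closes the argument.
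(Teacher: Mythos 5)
Your proposal is correct and follows essentially the same bracketing-chaining-with-Bernstein strategy as the paper: geometric dyadic scales, telescoping through a hierarchy of $L_1(\PP)$-bracketing nets, a Bernstein-type maximal inequality for binomial increments at each scale, and the hypothesis $\log\Ncal_{[],1}(\mathcal{A},\epsilon,\PP)\le \epsilon n/3$ absorbing the sub-Exponential tail into the sub-Gaussian one. The only cosmetic difference is that you project to the lower bracket (and carry a separate bracket-width residual at the finest scale), whereas the paper first replaces $S$ by an element of the finest net at cost $2^{-r}$ and then projects to the nearest net element; these are equivalent bookkeeping choices.
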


 The following corollary of Lemma~\ref{lem:final-chain} is the key ingredient in the proofs of upper bounds.
\begin{restatable}{corollary}{mainthm}
	\label{cor:conv-mcdiarmid}
	For any log-concave probability measure $\PP$, the following holds:\footnote{This result holds for more general distributions; see Lemma \ref{lem:conv-general}.} 
	\[
	\E\left[\sup_{C \in \mathcal{K}_d} \left| \PP_n(C) - \PP(C)\right| \right] \leq   O_d(n^{-2/(d+1)}),
	\] 
	where $\mathcal{K}_d$ denotes the collection of convex sets in $\R^d$.
\end{restatable}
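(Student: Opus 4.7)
The plan is to apply \lemref{lem:final-chain} directly with $\mathcal{A} = \mathcal{K}_d$ and $\epsilon_0 = \PP(\bigcup \mathcal{K}_d) \leq 1$. The only nontrivial ingredient is an upper bound on the bracketing entropy $\log \Ncal_{[],1}(\mathcal{K}_d, \delta, \PP)$ for a log-concave $\PP$; once that is in hand, the conclusion follows from a fixed-point computation.

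For the entropy, the target is the Bronshtein-type rate $\log \Ncal_{[],1}(\mathcal{K}_d,\delta,\PP) \lesssim_d \delta^{-(d-1)/2}$ advertised in the introduction for convex subsets of a bounded region. To extend it to general log-concave $\PP$, I would first reduce to the isotropic case: $\mathcal{K}_d$ is affine-invariant and $L_1(\PP)$-brackets transform covariantly, so we may assume $\PP$ is isotropic log-concave. Then $\|f\|_\infty \leq C^d$ and $\PP(\|X\|>R)\leq C^d e^{-cR}$ by classical facts about isotropic log-concave densities. Truncate to $B(0,R)$ with $R \asymp_d \log(1/\delta)$ so the tails lose at most $\delta/2$ per bracket. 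Inside $B(0,R)$, Bronshtein's theorem produces a Hausdorff $\eta$-cover of convex bodies of log-size $\lesssim_d (R/\eta)^{(d-1)/2}$; converting Hausdorff to $L_1(\PP)$ proximity via $\|f\|_\infty$ and choosing $\eta$ appropriately gives the target bracketing entropy, up to polylog factors in $1/\delta$.

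With the entropy in place, the integral in \eqref{eq:35} becomes, for $d \geq 4$,
\begin{equation*}
\int_\epsilon^1 \sqrt{\delta^{-1}\log \Ncal_{[],1}(\mathcal{K}_d,\delta,\PP)}\, d\delta \lesssim_d \int_\epsilon^1 \delta^{-(d+1)/4}\, d\delta \asymp_d \epsilon^{(3-d)/4},
\end{equation*}
where $(d+1)/4 > 1$ for $d\geq 4$ forces the integrand to blow up at zero so the integral is controlled at its lower endpoint. Balancing $\epsilon \asymp \epsilon^{(3-d)/4}/\sqrt{n}$ yields $\epsilon^{(d+1)/4} \asymp n^{-1/2}$, hence $\epsilon \asymp n^{-2/(d+1)}$, which drops into the RHS of \eqref{eq:35} to give the desired bound. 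At this $\epsilon$, the side condition $\log \Ncal_{[],1}(\mathcal{K}_d,\epsilon,\PP) \leq \epsilon n /3$ of \lemref{lem:final-chain} becomes $n^{(d-1)/(d+1)} \lesssim_d n^{(d-1)/(d+1)}$ and so holds for $n$ large in $d$.

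The main obstacle is delivering the bracketing bound without stray logs, since the naive truncation-plus-Bronshtein argument above carries polylog factors from the radius $R\asymp\log(1/\delta)$ and from a worst-case sup-norm estimate on $f$ inside $B(0,R)$. A sharper route --- presumably the content of \lemref{lem:conv-general} referenced in the footnote --- is to dyadically partition $\mathbb{R}^d$ by level sets of the log-concave density (themselves convex bodies) and construct brackets on each stratum where $f$ is essentially constant before aggregating; this avoids the brute-force $\|f\|_\infty$ loss and should eliminate the excess logarithms. Keeping the $d$-dependence explicit and singly exponential in the final constant is the other place where technical care is needed.
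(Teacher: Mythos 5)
Your high-level architecture matches the paper exactly: reduce to the isotropic case by affine invariance of $\mathcal{K}_d$, feed the bracketing entropy of convex sets into \lemref{lem:final-chain}, and solve the fixed point $\epsilon \asymp n^{-1/2}\epsilon^{(3-d)/4}$ to get $n^{-2/(d+1)}$; your arithmetic and your check of the side condition are both correct. You also correctly diagnose that the one-shot truncation to $B(0,R)$ with $R\asymp\log(1/\delta)$ leaks polylog factors through the $R^{d(d-1)/2}$ term in Bronshtein's bound, which is precisely why a single global bracket does not suffice.

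Where you leave a gap is in actually delivering the log-free version, and your sketch of the fix (stratifying by level sets of the density, where $f$ is ``essentially constant'') is close in spirit but not what the paper does, and would be more delicate to execute. The paper's Lemma \ref{lem:conv-general} instead decomposes $\mathbb{R}^d$ into unit-width spherical shells $A_i = \{x : i \le \|x\|_2 < i+1\}$, bounds $\E\sup_{K}|\tPP(K\cap A_i)|$ on each shell via a single application of the fixed-point argument (Lemma \ref{lem:one-shell}) with local radius $R=i+1$ and local density bound $M_i(f)=\sup_{\|x\|\ge i}f(x)$, and then sums $\sum_i M_i(f)^{(d-1)/(d+1)}(i+1)^{d(d-1)/(d+1)}$. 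This does \emph{not} require $f$ to be roughly constant on each stratum --- only that the pointwise upper bound $M_i(f)$ decays fast enough, which Lemma \ref{lem:density-decay} supplies as $M_i(f)\lesssim e^{-c(d)i}$, making the sum a convergent geometric-type series with value $O_d(1)$. The shells thus absorb both the unbounded support and the $R^d$ blow-up inside Bronshtein at once, each shell being a fixed radius-one annulus, so no $\log(1/\delta)$ ever enters the bracketing entropy. Your level-set stratification would also need to handle the fact that the strata can be geometrically awkward (thin convex annuli of varying width), whereas unit shells plus the exponential tail are essentially free once the one-shell lemma is in hand. So the overall plan is right, the fixed-point calculus is right, but the actual decomposition that kills the logs is radial shells, not density level sets, and you would need to carry it out rather than gesture at it.
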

	\begin{remark}\label{Rem:Lowerboundball}
	The  bound in Corollary \ref{cor:conv-mcdiarmid} is tight, for example, when $\PP= \mathrm{Unif}(B_d)$. Indeed, when $n$ is large enough, the convex hull of $n$ points $X_1\ldots, X_n$ chosen uniformly at random from $B_d$ satisfies the following \citep[Thm 1]{affentranger1991convex}:
	\[
	\frac{\E[\vol(B_d \setminus \mathrm{conv}\{X_1\ldots,X_n\})]}{\vol(B_d)} = \Theta(d\cdot n^{-\frac{2}{d+1}}).
	\]
	Hence,
	$
	    	\E\left[\sup_{C \in \mathcal{K}_d} |\PP_n(C) - \PP(C)|\right]
	\ge  c d \cdot n^{-\frac{2}{d+1}}.\footnote{The lower bound also follows from Sudakov's minoration inequality with a constant that is smaller than $d$, see \citep[Theorem 2.3]{han2019global}}
	$
	Remarkably, Lemma \ref{lem:conv-general} below shows that up to an \emph{absolute} constant this bound is tight.
	Based on this observation, and the classical results that show that the Euclidean ball  ``is the hardest to approximate" \citep{macbeath1951extremal,artstein2015asymptotic} (in various settings), we believe that the bound in Corollary~\ref{cor:conv-mcdiarmid} is $\Theta(d \cdot n^{-\frac{2}{d+1}})$.
	\end{remark}
\section{Prior Work}\label{Sec:Prior}
% (applied with $R=1,M= \vol(B_d)^{-1},\PP=\mathrm{Unif}(B_d)$), then
% 	$
% 	    	\E\left[\sup_{K \in \mathcal{K}_d} |\PP_n(K) - \PP(K)|\right]
% 	\le  Cd \cdot n^{-\frac{2}{d+1}}.

Log-concave distributions constitute a natural non-parametric family that includes the Gaussian, exponential, uniform over convex bodies, logistic, Gamma, Laplace, Weibull, Chi and Chi-Squared, Beta distributions and more. This rich family has a key role in statistics \citep{bagnoli2005log}, pure mathematics \citep{brazitikos2014geometry,stanley1989log}, computer science \citep{balcan2013active,lovasz2007geometry,axelrod2019polynomial} and economics \citep{an1997log}. The task of estimating a log-concave density has a long history (see, for example, the recent survey of \citet{samworth2018recent}), and the rates of convergence have been a focus of the literature in the last decade, with an incomplete list being \citep{diakonikolas2016learning,dumbgen2011approximation,dumbgen2009maximum,kim2016global,cule2010theoretical,carpenter2018near,schuhmacher2010consistency,feng2018adaptation,kim2018adaptation,han2016approximation,doss2016global}.  Similarly, convexity is a natural assumption on the shape of a regression function. The rates of convex regression in random and fixed design have been investigated in the literature, including the works of \citep{gao2017entropy,han2016multivariate,guntuboyina2012optimal,guntuboyina2013covering,gardner2006convergence,bellec2018sharp,ghosal2017univariate}.

%Before putting our results in the context of prior work, we mention that the use of Bernstein inequalities in chaining is not new, and goes back at least to \cite{dudley1978central}. However, we are not aware of a study that points to the rate given by  \eqref{eq:new_fixed_pt} in the non-Donsker regime.

\paragraph{Density Estimation in Hellinger Distance} \citet{kim2016global} proved a lower bound on the minimax rate with respect to the squared Hellinger distance
\begin{equation}\label{Eq:KimLowerBound}
\inf_{\bar{f}} \mathcal{R}_{h^2}(\bar{f}, \Fcal_d) \geq
 \begin{cases}
cn^{-\frac{4}{5}} & \text{if} \ d = 1  \\ e^{-cd}n^{-\frac{2}{d+1}} & \text{if} \ d\geq 2.
\end{cases} 	
\end{equation}

The authors proved that the log-concave MLE  $\widehat{f}$ achieves this lower bound when $d \leq 3$, up to a logarithmic factor:
\begin{equation}\label{Eq:KimMLEUpperBound}
\mathcal{R}_{h^2}(\widehat{f}, \Fcal_d) \le C \cdot 
\begin{cases}
n^{-\frac{4}{5}} & d = 1 \\ n^{-\frac{2}{d+1}}\log(n) & d = 2, 3.
\end{cases}
\end{equation}
Their proof, based on $\epsilon$-entropy numbers of  log-concave distributions, does not extend for $d \geq 4$, due to the aforementioned obstacle of the non-Donsker regime. 

In high dimensions ($d \geq 4$), \cite{schuhmacher2010consistency} showed that the MLE is  consistent, and recently \citet{carpenter2018near} were the first to achieve an upper bound of
\begin{equation}\label{Eq:CarpMLEUpperBound}
\mathcal{R}_{h^2}(\hat{f}) \le O_d\left(n^{-\frac{2}{d+3}}\log^3(n)\right) \quad \text{for} \  d \geq 4
\end{equation}
for its risk. Yet, this estimate is still sub-optimal, and it strengthened the conjecture of \citet{samworth2018recent} that the MLE does not achieve the minimax rate when $d \geq 4$. The first part of Theorem~\ref{thm:hellinger} in the present paper disproves this conjecture (up to a logarithmic factor). Our proof is based on an approach first introduced in a paper of \citet{schuhmacher2010consistency} in order to show a uniform consistency of the MLE, and later  extended by \citet{carpenter2018near} to show the non-asymptotic bound in \eqref{Eq:CarpMLEUpperBound}. 

The second part of Theorem \ref{thm:hellinger} improves the constant in \eqref{Eq:KimLowerBound} from $e^{-cd}$ to an absolute constant. Moreover, it also gives an improvement to the main result of \cite{RademacherG09} that studied the sample complexity of estimating a convex body.\footnote{This work is in a slightly different setting, known as the random oracle design. The authors showed that at least $2^{\Omega(\sqrt{d})}$ samples are required to estimate a convex set up to fixed accuracy. This work improves the bound to $2^{\Omega(d)}.$}

\paragraph{Density Estimation in Total Variation} In terms of  total variation, the minimax risk is only known for univariate log concave distributions \citep{chan2013learning,devroye2012combinatorial,diakonikolas2016efficient} and equals  $\Theta(n^{-2/5})$. However, in Hellinger squared distance the risk is different and equals $\Theta(n^{-4/5})$. In high dimensions ($ d \geq 2$), a difference between the minimax risks was not known, and the best lower bound in both metrics was $\Omega_d(n^{-2/(d+1)})$, as already presented in \eqref{Eq:KimLowerBound}.  \citet{diakonikolas2016learning} were the first to achieve an upper bound for $d \geq 4$, showing that there is an estimator that achieves risk upper bounded by  $O_d(n^{-2/(d+5)} \log(n)^{4/(d+5)} )$. In the aforementioned paper, the authors conjecture that minimax rate of this problem is $\Theta_d(n^{-2/(d+4)})$. Indeed,  Theorem~\ref{thm:TV} in the present paper we prove that the minimax rate of learning a log-concave density is $\Theta_d(n^{-\frac{2}{d+4}})$, up to a factor of $\log(n)^{\frac{d(d+1)}{d+4}}$.

\paragraph{Bounded Convex Regression}
\cite{han2016multivariate} analyzed the problem of bounded convex regression, and their results are based on the tight bounds for the $\epsilon$-entropy numbers of the bounded convex functions in \citep{gao2017entropy}. One of their results is a tight estimation of the minimax rate, in the cases that the underlying distribution is uniform measure on the Euclidean ball or on the cube (more generally, a polytope with a restricted number of facets). They proved that
\begin{equation}{\label{Eq:111}}
    \inf_{\bar{f}}\Rcal_{L_2^2(\mathrm{Unif}(B_d))}(\bar{f}, \Hcal_{d,\Gamma}) =  \Theta_{d}(C_{\Gamma,\xi} \cdot n^{-\frac{2}{d+1}}) \ , \ \inf_{\bar{f}}\Rcal_{L_2^2(\mathrm{Unif}([0,1]^d))}(\bar{f}, \Hcal_{d,\Gamma}) =  \Theta_{d}(C_{\Gamma,\xi} \cdot n^{-\frac{4}{d+4}}).
\end{equation}

However, for LS, \citep{han2016multivariate} only showed the following upper bounds
\[
	\Rcal_{L_2^2(\mathrm{Unif}(B_d))}(\hat{f}, \Hcal_{d,\Gamma}) \le C(d,L) \cdot \Gamma \cdot 
	\begin{cases}
	n^{-\frac{4}{5}} & d = 1 \\ n^{-\frac{2}{d+1}}\log(n) & d = 2, 3 \\ n^{-\frac{1}{d-1}} & d \geq 4
	\end{cases}
\]
and
\begin{equation}\label{Eq:2222}
	\Rcal_{L_2^2(\mathrm{Unif}([0,1]^d))}(\hat{f}, \Hcal_{d,\Gamma})\le C(d,L) \cdot \Gamma \cdot
	\begin{cases}
	n^{-\frac{4}{d+4}} & d \leq 4  \\ n^{-\frac{2}{d}} & d \geq 5
	\end{cases}
\end{equation}
under the assumption that $\xi$ is $L$ sub-exponential. Observe that when $d\geq 4$ the first bound differs from the minimax rate, and similarly the second bound for $d\geq 5$.
These results suggest that in the setting of bounded convex regression, \emph{the domain plays a key role}.
 Theorem~\ref{thm:regression} and \eqref{Eq:111} show that the LS is optimal when  $\PP = \mathrm{Unif}(B_d)$ and $d \geq 4$.
Moreover, the upper bound in Theorem~\ref{thm:regression} holds for any log-concave probability measure and $\xi$ that has $\frac{d+1}{d-1}+\epsilon$ moment bounded by $L$ for some $\epsilon >0 $. However, by \eqref{Eq:2222} this bound is not optimal for some log-concave densities.

\paragraph{Learning Convex Bodies}
The problem of estimating a convex body from the convex hull of random samples is extensively studied in the high dimensional geometry literature (see, for example, the books by \citet{schneider2008stochastic} and \citet{chiu2013stochastic}) and statistics \citep{brunel2013adaptive,brunel2016adaptive}. It is well-known that if the points are drawn uniformly \citep{barany1992random}, then with high probability the convex hull is $ O_d(n^{-2/(d+1)})$-close to the original set in symmetric volume difference.
This implies that the  risk of learning uniform distributions over convex bodies is $O_d(n^{-2/(d+1)})$ with respect to both the Hellinger squared and the total variation. Observe that the aforementioned family is a small subset of the log-concave distributions. Remarkably, our first result implies that the log-concave MLE achieves the same risk with respect to the \emph{squared Hellinger} distance (up to a logarithmic factor). In contrast, our second result shows that with respect to the total variation distance, learning a log-concave density is \emph{harder} than the uniform distributions of convex sets.

\paragraph{The $\epsilon$-entropy for the two problems} In this paper, we show that entropy numbers of almost isotropic log-concave distributions and bounded convex functions are equivalent up to logarithmic factors.
Theorem~\ref{Thm:Entropy} closes the gap of the Lemma in \citep{kim2016global}, they proved the following lower bound for the $\epsilon$-entropy numbers of almost isotropic log-concave distributions when $d \geq 4$  with respect to the Hellinger metric:
\[
	\log \mathcal{N}_{h}(\Fcal_{d,\tilde{I}},\epsilon) \geq \Omega_d(\epsilon^{-(d-1)})
\]

We show that, up to logarithmic factors, the same upper bound holds. Our proof is based on main result in \cite{gao2017entropy} who showed that  $\epsilon$-entropy numbers of the bounded convex functions supported on the Euclidean ball are equal to $\Theta_d(\epsilon^{-(d-1)})$.  We also give a sharp bound (up to logarithmic factors) for the $\epsilon$-entropy numbers with respect to the total variation distance.

\section*{Acknowledgements}
The authors want to thank Adityanand Guntuboyina, Ilias Diakonikolas, and Richard Samworth for their helpful comments. This material is based upon work supported by the Center for Brains, Minds and Machines (CBMM), funded by NSF STC award CCF-1231216.
\section{Discussion: employing the bracketing numbers in high dimensions}\label{Sec:Disc}

For both problems studied in this paper, directly using the $\epsilon$-bracketing numbers of these families together with standard chaining techniques leads to sub-optimal rates. In this section, we elaborate on the discussion in the Introduction to highlight these difficulties. 

To start the discussion, consider the result of \citet{gao2017entropy} for bracketing entropy of bounded convex functions
\begin{equation} \label{eq:16}
\log \mathcal{N}_{[],2}(\Hcal_{d,1},\epsilon,\textrm{Unif}(B_d)) = \Theta_d(\epsilon^{-(d-1)}).
\end{equation}
These covering numbers are on distances with respect to the uniform measure over $B_d$. Given $n$ samples, we have access to a projection $\Hcal_{d,1,n} := \{(h(X_1),\ldots,h(X_n)) : h \in \Hcal_{d,1}\}$ of the class onto the data.
The symmetrization-based arguments lead us to estimate the global/local Rademacher complexity of the set $\Hcal_{d,1,n}$, therefore we consider the empirical entropy numbers $\log \mathcal{N}_{2}(\Hcal_{d,1},\epsilon,\PP_n)$. 
% of this $n$-dimensional set.
Estimating the empirical $\epsilon$-entropy may be a hard task, therefore we naturally try to use the population bracketing entropy numbers (in this case, uniform), and this step is justified (in order to find a tight bound) if they behave similarly to the empirical entropy numbers until the fixed point in \eqref{eq:fixed_pt_dudley_old_non} (in which case, this fixed point is the global Rademacher complexity, up to a constant). As we explain below, in this case, the equivalence only holds for $\epsilon \geq C(d)n^{-\frac{1}{d+1}}$, while the fixed point of \eqref{eq:fixed_pt_dudley_old_non} is $C_1(d)n^{-\frac{1}{d-1}}$. 

First, the lower bound on the $\epsilon$-covering numbers with respect to the uniform measure \citep[Theorem~4]{gao2017entropy} does not hold when one considers the empirical measure instead. 
The proof is based on a construction of an $\epsilon$-separated set of functions that differ from each other by a set of $c(d)\epsilon^{-(d-1)}$ disjoint caps (see Definition \ref{def:cap}) with height of $\epsilon^2$. Clearly any two functions differ by a set of volume  $c(d)\epsilon^2$. Moreover, observe that all these caps lie in the shell
\[
	\{ x \in \R^d: 1-c(d)\epsilon^2 \leq \|x\|_2 \leq 1\}
\]  
that has a volume of $\Theta_{d}(\epsilon^2)$. In order to witness the difference between the functions with respect to the empirical measure, we must have at least one point in each cap for at least half of the caps. Hence, we need that $c(d)\epsilon^{-(d-1)}$ points fall in a set of measure $C(d)\epsilon^2$, which is $\Theta_{d}(\epsilon^2 n)$ under the uniform measure. Therefore, the following must hold:
\[
	c(d)\epsilon^{-(d-1)} \leq \epsilon^2n \iff \epsilon \geq c(d) n^{-\frac{1}{d+1}}
\]
To conclude, if $\epsilon < c(d)n^{-\frac{1}{d+1}}$, the empirical distances provide a ``distorted" picture as compared to the population distances. As discussed in the Introduction, since the level sets technique succeeds in this case, for any such $\epsilon \leq c(d)n^{-\frac{1}{d+1}}$ one cannot find any $\epsilon$-separated set (w.r.t. to the $L_2(\PP_n)$) of the aforementioned cardinality, that is $\Omega_d(\epsilon^{-(d-1)})$. However, for $\epsilon > C(d)n^{-\frac{1}{d+1}}$, the population and empirical  entropies  are equal up to some absolute constant. It follows from a classical result in empirical process theory (see \cite[Lemma 5.11]{van2000empirical}) (see also \cite{rakhlin2017empirical,bousquet2002concentration}), that connects the empirical and population distances of any family of functions. More precisely, up a multiplicative absolute constant, for any $f,g \in \Hcal_{d,1}$,
\begin{equation}\label{Eq:bounddistance}
\|f-g\|_{L_2(\PP_n)} \asymp \|f-g\|_{L_2(\PP)} + d^{*},
\end{equation}
where $d^{*}$ is the solution to $\log \mathcal{N}_{[],2}(\Hcal_{d,1}-\Hcal_{d,1},\epsilon,\textrm{Unif}(B_d)) = Cn\epsilon^{2}$. Using the fact that $\Hcal_{d,1}$ is convex and it contains the zero function, \eqref{eq:16} implies that $\log \mathcal{N}_{[],2}(\Hcal_{d,1}-\Hcal_{d,1},\epsilon,\textrm{Unif}(B_d)) \lesssim C(d)\epsilon^{-(d-1)}$, and therefore $d^{*} \asymp n^{-1/(d+1)}$.

% the population and empirical  entropies  are equal up to some absolute constant. It follows from a result in empirical process theory, proved in \cite{rakhlin2017empirical,bousquet2002concentration}, 
% where $r^{*}$ is the ``localization radius" of $\mathcal{G} := \{(f-g)^2: f,g \in \Hcal_{d,1}\}$ (see Section 2 in \citep{rakhlin2017empirical} or \cite{bousquet2002concentration} for formal definition). Since the functions are bounded by one, $r^{*}$ is bounded by the Rademacher complexity of $2\Hcal_{d,1}$, which by Lemma \ref{corr:noise} is bounded by $C(d)n^{-\frac{2}{d+1}}$. Therefore, the $\epsilon$-entropies with respect  $L_2(\textrm{Unif}(B_d))$ and $L_2(\PP_n)$ scale similarly above $C(d)n^{-\frac{1}{d+1}}$ and start to differ below. Also, this result follows from \citep[Lemma 5.11]{van2000empirical}, where it was shown that the entropies 

This phenomenon of differing empirical and population entropy numbers above the fixed point in \eqref{eq:fixed_pt_dudley_old_non} \emph{does not always occur} in the non-Donsker regime. As an example, consider the family of distributions/functions constructed by \citet{birge1993rates} to show suboptimality of MLE for certain non-Donsker classes. For each $\alpha \in (0,1/2)$, the authors constructed a subset of functions that are $\alpha$-Holder continuous. Following their construction, the population $\epsilon$-entropy numbers are of the order $O(\epsilon^{-\frac{1}{\alpha}})$, and the empirical entropy numbers only differ (up to a constant that depends on $\alpha$) when $\epsilon \lesssim n^{-\alpha}$, that is the fixed point  \eqref{eq:fixed_pt_dudley_old_non}.
% in the below, which is below the minimax rate $n^{-\frac{\alpha}{2\alpha+1}}$ for this problem.

As mentioned before, our approach is to reduce the estimation problems to a question of uniform convergence over level sets. For bounded convex regression this reduction is established in Section~\ref{sec:convex-reg}, while the reduction for log-concave estimation was shown by \citet{carpenter2018near} (and earlier, an asymptotic  version in \cite{schuhmacher2010consistency}).

\section{Preliminaries and notations}

The volume (or a surface area) of a set is denoted by $\mathrm{vol}(\cdot)$. The centered  $d$-dimensional Euclidean ball with radius $1$ is denoted by $B_d$ and $\partial B_d$ denotes its surface. The Euclidean ball with center $x$ and radius $r$ is denoted by $B_d(x,r)$, and $ B_d(r) $ denotes a centered ball with radius $ r $.
The collection of all convex bodies contained in $\mathbb{R}^d$ is denoted by $\mathcal{K}_d$, and $\mathcal{K}^{(R)}_d$ denotes the collection of bodies contained in $ R \cdot B_d$.

Let $(\mathcal{M},d)$ be a metric space, fix $\epsilon>0$ and let $\mathcal{S} \subseteq \mathcal{M}$ be a finite subset. We say that $\mathcal{S}$ is an \emph{$\epsilon$-net ($\epsilon$-covering)} if for any $x \in \mathcal{M}$ there exists a $y \in \mathcal{S}$ such that $d(x,y) \le \epsilon$. The size of the smallest $\epsilon$-cover is called a covering number, and its logarithm is termed \textit{$\epsilon$-entropy}. 
	Given a partial ordering $\preceq$ over $\mathcal{M}$, we say that $\mathcal{S}$ is a \emph{$\epsilon$-net with bracketing} if for any $x \in \mathcal{M}$ there exists a $\overline{y}, \underline{y} \in \mathcal{S}$ such that $d(\underline{y},\overline{y}) \le \epsilon$ and $\underline{y} \preceq x \preceq \overline{y}$. Define the \emph{$\epsilon$-bracketing numbers} as the smallest cardinality of an $\epsilon$-net with bracketing (will be denoted by adding $[]$ as a subscript in  $\mathcal{N}$).
\subsection{Geometry}

For a point $p \in \mathbb{R}^d$ and a measurable set $K \subseteq \mathbb{R}^d$, define $d(K,p) = \inf_{u \in K} \| p - u \|_2$. The Hausdorff distance between two convex bodies $K_1$ and $K_2$ in $\mathbb{R}^d$ is defined as 
	\[
	d_H(K_1,K_2) = \max\left( \max_{p_1 \in K_1} d(K_2, p_1),~ \max_{p_2 \in K_2} d(K_1, p_2) \right).
	\]

\begin{definition}{\label{def:cap}}
	A \emph{cap} of a $d$-dimensional ball, with height $h \in (0,1)$ and a center $x_0  \in \partial B_d $, is defined as  
	\[
	C(x_0,h):= \{x \in B_d: x_0^{\top}x \geq 1-h \}.	
	\]
\end{definition}  

\begin{lemma}\label{Lem:EuclideanBallVolume}
	The volume of $d$-dimensional ball is the following:
	\[
	\vol(B_d) = \frac{\pi^{\frac{d}{2}}}{\Gamma(\frac{d}{2}+1)}= (1+O(d^{-1}))(\pi d)^{-0.5}\left(\frac{2\pi e}{d}\right)^{\frac{d}{2}}.
	\]
	Therefore, the radius of a ball with volume one is $ (1+O(\frac{\ln(d)}{d}))\sqrt{\frac{d}{2\pi e}} $. Additionally, the following identities hold:
	\begin{enumerate}
		\item For all $ r \in (1,\infty) $, $ \vol(B_d(r\sqrt{d/2\pi e})) \leq r^d \vol(B_d) $.
		\item $ c\sqrt{d} \leq \frac{\vol(B_d)}{\vol(B_{d-1})} \leq C\sqrt{d}$.
		\item $ \vol(\partial B_d) = d\vol(B_d) $.
	\end{enumerate}
\end{lemma}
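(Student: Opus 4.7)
The plan is to treat each of the five assertions as consequences of a single source, namely the classical closed form $\vol(B_d) = \pi^{d/2}/\Gamma(d/2+1)$ together with Stirling's approximation. First I would derive that closed form in the usual way: integrate the Gaussian density $e^{-\|x\|_2^2}$ over $\mathbb{R}^d$ in two ways. On one hand a product of $d$ one-dimensional integrals gives $\pi^{d/2}$; on the other, writing the integral in polar coordinates yields $\vol(\partial B_d)\int_0^\infty r^{d-1}e^{-r^2}dr = \tfrac{1}{2}\vol(\partial B_d)\,\Gamma(d/2)$. Combining this with identity (5), which I prove by differentiating the scaling relation $\vol(B_d(r))=r^d\vol(B_d)$ at $r=1$, gives $\vol(B_d) = \pi^{d/2}/\Gamma(d/2+1)$. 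The asymptotic form $(1+O(d^{-1}))(\pi d)^{-1/2}(2\pi e/d)^{d/2}$ is then obtained from Stirling's expansion $\Gamma(d/2+1)=(1+O(d^{-1}))\sqrt{\pi d}\,(d/(2e))^{d/2}$, which controls the multiplicative error uniformly in $d$.

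For part (2) I would invert the volume formula: if $r_0$ is the radius of a ball of unit volume then $r_0 = \vol(B_d)^{-1/d}$. Taking the $(-1/d)$-th power of the asymptotic from part (1) isolates the dominant factor $\sqrt{d/(2\pi e)}$, while the correction terms $(1+O(d^{-1}))^{-1/d}$ and $(\pi d)^{1/(2d)}$ both contribute a $1+O(\log d / d)$ prefactor, giving the stated formula. Part (3) is a direct scaling computation: $\vol(B_d(r \cdot r_0))=r^d r_0^d \vol(B_d)$, and substituting the value of $r_0$ (equivalently, writing $B_d$ in the normalization of unit-volume balls) yields the claimed inequality for $r\geq 1$; the $r^d$ factor is exact and the inequality direction comes from comparing $r_0$ to its leading order $\sqrt{d/(2\pi e)}$.

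For part (4) I would compute the ratio directly using the closed form:
\[
\frac{\vol(B_d)}{\vol(B_{d-1})} = \sqrt{\pi}\cdot\frac{\Gamma((d+1)/2)}{\Gamma(d/2+1)}.
\]
The Gautschi-type inequalities $\sqrt{x+1/2} \leq \Gamma(x+1)/\Gamma(x+1/2) \leq \sqrt{x+1}$ applied with $x=d/2$ pin down this ratio up to universal constants and give the bound stated in the lemma. This step is the only one that requires a nontrivial tool beyond a one-line computation.

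The main obstacle, if any, is purely bookkeeping: carrying the $(1+O(d^{-1}))$ and $(1+O(\log d/d))$ prefactors through the exponentiations and ratios without losing the claimed order of the error. All other steps (Gaussian integral, scaling, differentiation in (5), Gautschi bounds in (4)) are standard. I would therefore organize the proof as: (i) establish identity (5) as a one-line consequence of scaling; (ii) prove the closed form in (1) via the Gaussian integral and (5); (iii) plug Stirling into (1) for the asymptotic; (iv) derive (2) by inverting; (v) derive (3) by scaling; (vi) derive (4) from the Gamma-ratio.
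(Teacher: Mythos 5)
Your overall strategy — derive the closed form from the Gaussian integral, plug in Stirling, and read off each subsidiary claim — is sound, and since the paper states this lemma without proof you do need to supply one. Your derivations of the main asymptotic, of the unit-volume radius $r_0 = (1+O(\log d/d))\sqrt{d/(2\pi e)}$, and of the surface-area identity $\vol(\partial B_d) = d\vol(B_d)$ are all correct. For the two remaining numbered items, however, your sketch asserts that your calculations ``yield the claimed inequality'' and ``give the bound stated in the lemma,'' and this is not what happens: carrying out the bookkeeping reveals that the statements as printed cannot be right, and your method actually proves different (correct) versions.

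For item (1), direct scaling gives $\vol(B_d(r\sqrt{d/(2\pi e)})) = r^d\,(d/(2\pi e))^{d/2}\,\vol(B_d)$, which already exceeds $r^d\vol(B_d)$ once $d > 2\pi e$, so the printed inequality is false in high dimension. Your route — replace $\sqrt{d/(2\pi e)}$ by the unit-volume radius $r_0 \ge \sqrt{d/(2\pi e)}$ — correctly yields $\vol(B_d(r\sqrt{d/(2\pi e)})) \le \vol(B_d(r\, r_0)) = r^d$, but that is the bound $\le r^d$, \emph{not} the stated $\le r^d\vol(B_d)$, which is strictly stronger for large $d$ since $\vol(B_d)\to 0$; you should say explicitly that your method proves $\le r^d$ and that the spurious $\vol(B_d)$ factor in the statement must be a typo. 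For item (2), your own identity $\vol(B_d)/\vol(B_{d-1}) = \sqrt{\pi}\,\Gamma((d+1)/2)/\Gamma(d/2+1)$ combined with Gautschi forces this ratio to be $\Theta(1/\sqrt d)$, not $\Theta(\sqrt d)$ (check: $\vol(B_{10})/\vol(B_9)\approx 0.77<1$). Your Gautschi computation therefore \emph{refutes} rather than confirms the printed bound; the correct assertion, and the one the paper's later cap-volume calculations actually invoke, is the reciprocal $\vol(B_{d-1})/\vol(B_d)\asymp\sqrt d$, equivalently $\vol(\partial B_d)/\vol(B_{d-1})\asymp\sqrt d$. A complete write-up should prove these corrected versions and flag the two typos rather than claim to derive the literal statements.
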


The following is a bound by \citet{bronshtein1976varepsilon,dudley1999uniform} on the covering numbers of $\mathcal{K}^{(R)}_d$ under the volume metric.

\begin{lemma} \label{lem:bro}
	For any $0 < \epsilon <1$, 
	\[
	\log \mathcal{N}_{[],H}\left(\mathcal{K}_d^{(R)},\epsilon\right)
	\leq Cd^{5/2} (\epsilon/R)^{-\frac{d-1}{2}},
	\]
	where $\mathcal{N}_{[],H}$ denotes the bracketing numbers with respect to the Hausdorff distance.	Also when $\epsilon$ is small enough, the following holds 
		\[
		\log \mathcal{N}_{[],1}\left(\epsilon,\mathcal{K}_d^{(R)},\vol\right)
		\leq Cd^{(d+4)/2}\vol(B_d)^{(d-1)/2}\left(\epsilon/R^d\right)^{-\frac{d-1}{2}}.
		\]
\end{lemma}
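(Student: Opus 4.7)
The plan is to prove both inequalities by first establishing the Hausdorff bracketing bound via Bronshtein's classical polytope-approximation argument, and then deducing the volume bracketing bound by converting Hausdorff gaps into volume gaps using Steiner's formula.

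\emph{Hausdorff bracketing.} By scaling I reduce to $R=1$. The plan is to first construct a Hausdorff $(\epsilon/3)$-covering $\mathcal{N}_0$ of $\mathcal{K}_d^{(1)}$ of size at most $\exp(C d^{5/2} \epsilon^{-(d-1)/2})$. Following Bronshtein, I would fix a grid of $M = \Theta(d^{3/2}\epsilon^{-(d-1)/2})$ points on $\partial B_d$ spaced by $\Theta(\sqrt{\epsilon})$, where the $d^{3/2}$ comes from $\vol(\partial B_d)/\vol(B_{d-1}) = d \cdot \vol(B_d)/\vol(B_{d-1}) = \Theta(d^{3/2})$ (Lemma~\ref{Lem:EuclideanBallVolume}). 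Each $K \in \mathcal{K}_d^{(1)}$ is then approximated to Hausdorff error $\epsilon/3$ by the convex hull of those mesh points lying within $\epsilon/3$ of $\partial K$; counting subsets of the mesh yields at most $2^M$ candidate polytopes, giving the claimed $d^{5/2}\epsilon^{-(d-1)/2}$ log-cardinality after accounting for a $\log M$ factor. To upgrade covering to bracketing, I would replace each $K_0 \in \mathcal{N}_0$ with the pair of inner/outer parallel bodies $\underline{K}_0 = K_0 \ominus (\epsilon/3) B_d$ and $\overline{K}_0 = K_0 + (\epsilon/3) B_d$. For any $K$ with $d_H(K, K_0) \leq \epsilon/3$ the inclusion $\underline{K}_0 \subseteq K \subseteq \overline{K}_0$ is immediate, and the triangle inequality gives $d_H(\underline{K}_0, \overline{K}_0) \leq 2\epsilon/3 < \epsilon$.

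\emph{Volume bracketing.} Given any Hausdorff bracket $(\underline{K}, \overline{K})$ of width $\delta$ with $\overline{K} \subseteq R B_d$, the containment $\overline{K} \subseteq \underline{K} + \delta B_d$ holds, so Steiner's formula applied to $\underline{K}$ combined with monotonicity of quermassintegrals yields
\[
\vol(\overline{K}) - \vol(\underline{K}) \;\leq\; \sum_{i=1}^{d} \binom{d}{i} R^{d-i} \delta^i \vol(B_d) \;=\; \bigl((R+\delta)^d - R^d\bigr)\vol(B_d).
\]
When $\delta \leq R/d$, a direct expansion gives $(R+\delta)^d - R^d \leq 2 d R^{d-1} \delta$, so the volume gap is at most $2 d R^{d-1}\vol(B_d)\,\delta$. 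I will then choose $\delta \asymp \epsilon / (d R^{d-1}\vol(B_d))$ and substitute into the Hausdorff bound: the factor $(\delta/R)^{-(d-1)/2}$ becomes $\bigl(d R^d \vol(B_d)\bigr)^{(d-1)/2} \epsilon^{-(d-1)/2}$, which combined with the external $d^{5/2}$ produces exactly $d^{(d+4)/2} \vol(B_d)^{(d-1)/2} (\epsilon/R^d)^{-(d-1)/2}$.

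\emph{Main obstacles.} The delicate bookkeeping is concentrated in the first step: obtaining the polynomial constant $d^{5/2}$ (as opposed to a much larger $d$-dependent factor) requires the sharp surface-area-to-cross-section estimates from Lemma~\ref{Lem:EuclideanBallVolume} together with a Dudley-style count of vertex subsets rather than a crude $\binom{M}{\leq k}$ bound. The ``$\epsilon$ small enough'' hypothesis in the second inequality is precisely the condition $\delta \leq R/d$ needed to keep $(R+\delta)^d - R^d$ linear in $\delta$; without it the Steiner expansion contributes a factor exponential in $d$ that cannot be absorbed into the stated dimensional constant. A minor technicality is that $K_0 \ominus (\epsilon/3)B_d$ can be empty for thin $K_0$, but this is handled by a standard regularization (working with bodies containing a fixed small ball and treating degenerate bodies separately), which affects only absolute constants.
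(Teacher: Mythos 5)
The paper does not actually prove this lemma; it cites it to Bronshtein (1976) and Dudley (1999), with the second (volume) inequality being a standard consequence. So your proposal is an attempt at the underlying argument, and the Hausdorff$\to$volume conversion via Steiner's formula is the right idea and is essentially correct: $\overline{K}\subseteq\underline{K}+\delta B_d$, monotonicity of quermassintegrals gives $\vol(\overline{K})-\vol(\underline{K})\le((R+\delta)^d-R^d)\vol(B_d)\le C d R^{d-1}\vol(B_d)\delta$ once $\delta\le R/d$, and substituting $\delta\asymp\epsilon/(dR^{d-1}\vol(B_d))$ into $d^{5/2}(\delta/R)^{-(d-1)/2}$ does produce the exponent $d^{(d+4)/2}\vol(B_d)^{(d-1)/2}(\epsilon/R^d)^{-(d-1)/2}$ (with a harmless extra $C^{(d-1)/2}$ in front of the claimed absolute constant, consistent with the paper's loose treatment of constants).

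The genuine gap is in the Hausdorff bracketing step. As written, your construction covers $\mathcal{K}_d^{(1)}$ by convex hulls of mesh points chosen on $\partial B_d$, but a body $K$ strictly inside $B_d$ (say $K=B_d(0,1/2)$) has no mesh point within $\epsilon/3$ of $\partial K$, so your candidate approximant is empty and cannot be Hausdorff-close to $K$. Bronshtein's argument does not approximate $K$ by convex hulls of points on a fixed sphere; it works with the support function $h_K$ on $S^{d-1}$, exploiting that $h_K$ is the restriction of a convex, $1$-homogeneous, $1$-Lipschitz function — this extra convexity is what yields the exponent $\epsilon^{-(d-1)/2}$ instead of the $\epsilon^{-(d-1)}$ you would get from generic $1$-Lipschitz functions on $S^{d-1}$, and it is exactly the part your sketch elides. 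The vertex-subset count $2^M$ and the appeal to a ``$\log M$ factor'' to reach the claimed $d^{5/2}$ are not justified and in fact would give $d^{3/2}\epsilon^{-(d-1)/2}$ (using the paper's Lemma~\ref{Lem:EuclideanBallVolume}), not $d^{5/2}\epsilon^{-(d-1)/2}$. A second, smaller, gap: the upgrade from covering to bracketing via $\underline{K}_0=K_0\ominus\tfrac{\epsilon}{3}B_d$, $\overline{K}_0=K_0\oplus\tfrac{\epsilon}{3}B_d$ does not give $d_H(\underline{K}_0,\overline{K}_0)\le\tfrac{2\epsilon}{3}$ for thin $K_0$ — if $K_0$ is lower-dimensional, $\underline{K}_0$ is empty and the triangle-inequality step fails; you flag this but it needs a genuine fix (e.g.\ a separate $\epsilon$-net for bodies that contain no ball of radius $\epsilon$), not just a remark about regularization, since the first part of the lemma is a statement about Hausdorff bracketing, not merely volume bracketing.
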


Let us formally define log-concave distributions. We refer the reader to \citet{brazitikos2014geometry} for further details.
\begin{definition}
	A $d$-dimensional density function $f$ 
	is log-concave if the following holds for all $x , y \in \R^{d}$ and $\lambda \in (0,1)$:
	\[
	f(\lambda x + (1-\lambda)y) \geq f(x)^\lambda f(y)^{1-\lambda} \,.
	\]
	Equivalently, $f(x) = e^{-U(x)}$ where $U:\R^p \to \R\cup\{\infty\}$ is convex.
	We say that $f$ is an isotropic log-concave density if it has a zero mean and identity covariance.
\end{definition}

\begin{lemma}[\citep{kim2016global}] \label{lem:density-decay}
Let $f$ be an isotropic log-concave density on $\R^{d}.$ Then $f$ can be bounded by
\[
	f(x) \leq e^{-c_A(d)\|x\|_2 + C_B(d)},
\]
for positive dimensional constants $c_A(d)$ and $C_B(d)$.
\end{lemma}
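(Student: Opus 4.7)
The plan is to combine two classical properties of isotropic log-concave densities, both derived by convexity-plus-isotropy arguments. Write $f(x)=e^{-U(x)}$ with $U:\R^d\to\R\cup\{\infty\}$ convex. The target $U(x)\ge c_A(d)\|x\|_2-C_B(d)$ splits naturally into (i) a uniform upper bound $\sup_x f(x)\le e^{C_B(d)}$ and (ii) exponential decay of $f$ along every ray, which I would establish separately and then combine.

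For step (i), the super-level set $S_\tau=\{x:f(x)\ge\tau\}$ is convex by log-concavity. On the one hand, $\int f=1$ forces $\vol(S_\tau)\le 1/\tau$. On the other hand, isotropy gives $\mathrm{Var}\langle X,u\rangle=1$ in every direction $u$, which prevents the bulk mass of $f$ from concentrating in an arbitrarily narrow convex region; quantifying this tension (with $\vol(B_d)$ estimated via Lemma~\ref{Lem:EuclideanBallVolume}) caps $\sup f$ by $e^{C_B(d)}$ for a dimensional constant.

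For step (ii), I would first locate a base point near the origin with controlled density. Isotropy gives $\E\|X\|_2^2=d$, so Markov yields $\Pr(\|X\|_2\le 2\sqrt d)\ge 3/4$. A volumetric pigeonhole inside $B_d(2\sqrt d)$ produces an $x_0$ with $\|x_0\|_2\le 2\sqrt d$ and $U(x_0)\le c_0(d)$. For any $x\neq x_0$, consider the one-dimensional convex function $\phi(s)=U(x_0+sv)$ along the unit vector $v=(x-x_0)/\|x-x_0\|_2$. Since $\int_0^\infty e^{-\phi(s)}\,ds$ is finite (as $f$ is integrable along a.e. ray) and $\phi$ is convex with $\phi(0)\le c_0(d)$, a standard one-dimensional argument shows $\phi$ must reach $\phi(0)+1$ within some distance $L(d)$: otherwise $\int_0^{L(d)}e^{-\phi(s)}\,ds\ge L(d)\,e^{-c_0(d)-1}$ would grow linearly, forcing $L(d)$ to be bounded. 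Convexity then propagates to linear growth $\phi(s)\ge s/L(d)-O(1)$ for $s\ge L(d)$, giving $f(x_0+sv)\le e^{-s/L(d)+O_d(1)}$.

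The hard part will be making the constant $L(d)$ \emph{uniform} over directions $v$: a direction-by-direction convexity argument only yields direction-dependent slopes. The cleanest resolution is to invoke the classical Lovász–Vempala tail inequality $\Pr(\|X\|_2\ge t\sqrt d)\le e^{-\Omega(t-1)}$ for isotropic log-concave $X$, itself a consequence of log-concavity plus isotropy but uniform over directions. Combining this uniform tail estimate with log-concavity (a large value of $f$ at a far point $x$ would, by log-concavity, force $f$ to be large on a convex corridor from $x_0$ to $x$, contradicting the tail bound) and with the sup bound from step (i) (to cover the regime where $\|x\|_2$ is not large) yields $f(x)\le e^{-c_A(d)\|x\|_2+C_B(d)}$ with positive dimensional constants.
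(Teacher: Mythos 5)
The paper does not prove this lemma; it is quoted from Kim and Samworth (2016) as a preliminary fact, so there is no in-paper proof to compare against. Your blind outline follows the standard route in the log-concave literature and is correct at the level of ideas: bound $\sup f$ via a volume/isotropy argument, locate a region of bounded-below density near the origin, and then use convexity of $\log f$ together with a global constraint to force linear decay of $\log f$.

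One step in the middle of (ii) does not close as written, and the problem is more basic than the ``uniformity over directions'' issue you flag. The quantity $\int_0^L e^{-\phi(s)}\,ds=\int_0^L f(x_0+sv)\,ds$ is the restriction of a $d$-dimensional density to a measure-zero ray; $\int_{\R^d} f = 1$ imposes no bound on it, and even with $\sup f\le M$ it can be arbitrarily large (consider a long thin log-concave slab, whose ray integral along the long axis is unbounded). So ``forcing $L(d)$ to be bounded'' has nothing to push against. Your proposed fix via the Lov\'asz--Vempala tail bound (or, equivalently, the one-dimensional marginal tail $\Pr(|\langle X,v\rangle|\ge t)\le 2e^{-ct}$) is the right move, but to exploit it you must thicken the ray into a positive-volume corridor. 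For that you need not merely a single point $x_0$ with $U(x_0)\le c_0(d)$, but a ball $B(x_0,\rho(d))$ on which $f\ge e^{-C(d)}$: with such a ball the super-level set $\{f\ge e^{-C(d)-c\|x\|}\}$ is convex, contains both the ball and any far point $x$ at which $f$ is allegedly large, hence contains a cone of controlled volume whose slices at distance $\|x\|-1$ have $(d-1)$-volume $\gtrsim_d (\rho/\|x\|)^{d-1}$, and comparing this against the tail bound yields the exponential decay after absorbing the polynomial factor. The needed ball does follow from your step (i): the super-level set $\{f\ge\tau(d)\}$ for a suitable $\tau(d)$ is convex, has volume bounded below (since $\sup f$ is bounded above and most of the mass lies in $B_d(O(\sqrt d))$), and lies essentially inside that ball, so it contains an inscribed ball of dimensional radius. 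Make this explicit; as written your pigeonhole produces a point, and the corridor from a point to a point is a measure-zero segment.
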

The following is a variant of Assouad's  Cube Lemma  \citep{van2000asymptotic,tysbakovnon}
\begin{lemma}[Assouad's  lemma]{\label{Lem:Assoud}}
	Let $\mathcal{F}$ denote a family of functions. Fix $K \in \mathbb{N}$,
	and suppose that the family $\{f_{\alpha} \in \mathcal{F}: \alpha \in \{0,1\}^K \}$ has  the following two properties:
	\begin{enumerate}
		\item 	
		\[
		h^2(f_{\alpha},f_{\beta}) \geq \eta\|\alpha -\beta\|_0.
		\]
		for all $\alpha,\beta \in \{0,1\}^{K},$ where $\|\alpha-\beta\|_0$  denotes the Hamming distance between $\alpha$ and $\beta$
		\item There exists $ c \in (0,1)$ such that for every $\alpha,\beta \in \{0,1\}^{K}$ satisfying $ \|\alpha -\beta\|_0 = 1$, we have
		\[
		h^2(f_{\alpha},f_{\beta}) \leq \frac{c}{n}.
		\]
		Then
		\[
		\inf_{\bar{f}}\mathcal{R}_L(\bar{f}) \geq \frac{K}{8} \cdot (1-\sqrt{c})\cdot \eta.
		\]
	\end{enumerate}
\end{lemma}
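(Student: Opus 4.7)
The plan is to run the standard Assouad cube argument, reducing density estimation on $\{f_\alpha\}_{\alpha\in\{0,1\}^K}$ to hypercube identification. First, given any estimator $\bar f$ of the density, I would introduce the induced parameter estimator $\hat\alpha \in \argmin{\alpha'\in\{0,1\}^K} h(\bar f, f_{\alpha'})$. Since $h$ is a metric, the minimality of $\hat\alpha$ together with the triangle inequality gives $h(f_{\hat\alpha}, f_\alpha)\le 2\,h(\bar f, f_\alpha)$, and property~1 then yields
\[
h^{2}(\bar f, f_\alpha)\;\ge\;\tfrac14 h^{2}(f_{\hat\alpha}, f_\alpha)\;\ge\;\tfrac{\eta}{4}\,\|\hat\alpha-\alpha\|_{0}.
\]

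Next, I would lower bound the supremum risk by the average over a uniform prior on $\{0,1\}^{K}$ and expand the Hamming distance coordinatewise:
\[
\sup_\alpha \E_\alpha h^{2}(\bar f, f_\alpha)
\;\ge\;\frac{\eta}{4\cdot 2^{K}}\sum_{i=1}^{K}\sum_{\alpha\in\{0,1\}^{K}} \Pr_\alpha(\hat\alpha_i\neq\alpha_i).
\]
For each coordinate $i$, partition $\{0,1\}^{K}$ into $2^{K-1}$ antipodal pairs $\{\alpha,\alpha^{(i)}\}$ with $\alpha^{(i)}$ obtained by flipping bit $i$. The elementary two-point inequality $\Pr_P(A)+\Pr_Q(A^{c})\ge 1-d_{\TV}(P,Q)$, applied to $P=f_\alpha^{\otimes n}$, $Q=f_{\alpha^{(i)}}^{\otimes n}$ and the event $A=\{\hat\alpha_i=1-\alpha_i\}$, gives
\[
\Pr_\alpha(\hat\alpha_i\neq\alpha_i)+\Pr_{\alpha^{(i)}}(\hat\alpha_i\neq \alpha_i^{(i)})\;\ge\;1-d_{\TV}\bigl(f_\alpha^{\otimes n},\,f_{\alpha^{(i)}}^{\otimes n}\bigr).
\]

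The last ingredient is to convert property~2 into a bound on the product total variation. Using the Hellinger affinity $\rho(P,Q)=1-h^{2}(P,Q)/2$, one has the tensorization $\rho(P^{\otimes n},Q^{\otimes n})=\rho(P,Q)^{n}$ and Le Cam's inequality $d_{\TV}^{2}\le 1-\rho^{2}$. Under property~2, $h^{2}(f_\alpha, f_{\alpha^{(i)}})\le c/n$, so Bernoulli's inequality yields $(1-h^{2}/2)^{2n}\ge 1-n h^{2}\ge 1-c$, hence $d_{\TV}(f_\alpha^{\otimes n},f_{\alpha^{(i)}}^{\otimes n})\le \sqrt{c}$. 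Summing the pair inequalities over the $2^{K-1}$ pairs and then over the $K$ coordinates produces $\sum_\alpha \E\|\hat\alpha-\alpha\|_{0}\ge K\cdot 2^{K-1}(1-\sqrt c)$, and plugging this back into the averaged risk bound gives precisely $\tfrac{K\eta(1-\sqrt c)}{8}$.

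The only substantive step is the tensorization/Le~Cam calculation that converts the per-pair Hellinger bound of scale $c/n$ into a product total variation bound of scale $\sqrt c$; this is exactly why the hypothesis is written as $h^{2}\le c/n$ rather than a constant. The rest --- metric triangle inequality, averaging over a uniform prior, and pair-partitioning each coordinate --- is routine bookkeeping.
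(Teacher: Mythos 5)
Your argument is the standard Assouad cube proof (induced estimator $\hat\alpha$, triangle inequality, uniform prior, coordinate pairing, two--point total variation bound, Hellinger tensorization plus Le Cam), and the paper itself gives no proof of this lemma --- it is cited from van der Vaart and Tsybakov --- so there is nothing to compare against step by step. The logic of each stage is sound. However, there is a constant inconsistency with the paper's normalization that is worth flagging: the paper defines $h^2(P,Q) = \tfrac12\int(\sqrt p - \sqrt q)^2$, under which the Hellinger affinity satisfies $\rho(P,Q)=\int\sqrt{pq}=1-h^2(P,Q)$, not $1-h^2(P,Q)/2$ as you wrote. Running your own calculation with the correct relation gives $\rho(f_\alpha^{\otimes n},f_{\alpha^{(i)}}^{\otimes n})=\rho(f_\alpha,f_{\alpha^{(i)}})^n\ge(1-c/n)^n\ge 1-c$, and then Le Cam's bound $d_{\TV}\le\sqrt{1-\rho^2}$ yields $d_{\TV}(f_\alpha^{\otimes n},f_{\alpha^{(i)}}^{\otimes n})\le\sqrt{2c-c^2}\le\sqrt{2c}$ rather than $\sqrt c$. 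Plugging that back in produces a lower bound $\tfrac{K}{8}(1-\sqrt{2c})\eta$. So either the hypothesis should read $h^2(f_\alpha,f_\beta)\le c/(2n)$, or the conclusion should read $1-\sqrt{2c}$; the statement in the paper and your derivation are only reconciled under the unnormalized convention $h^2=\int(\sqrt p-\sqrt q)^2$. This is a bookkeeping constant, not a gap in the method, but it does matter if one tries to use the lemma with $c$ close to $1/2$, as the paper implicitly does in its application.
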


\section{Proof of Theorem \ref{thm:regression}} \label{sec:convex-reg}
We use the notation $\PP_n(f) := \frac{1}{n}\sum_{i=1}^{n}f(X_i)$ and $\PP(f):= \E[f]$, where $f$ might be a level set of a convex function in the relevant context. The proof of the theorem rests on the following two results:
\begin{restatable}{lemma}{corr:noise}
	Assume that $d\geq 4$. Suppose $\xi$ has a $2+\epsilon $ moment bounded by $L$, and $\PP$ is a log-concave measure on $\R^d$. Then
	\label{corr:noise}	
	\[
	\E\left[\sup_{h \in \Hcal_{d,\Gamma}}\frac{1}{n}\sum_{i=1}^{n}h(X_i)\xi_i\right] \leq O_d(\Gamma \cdot C_{\xi} \cdot n^{-\frac{2}{d+1}}),
	\]
	where $C_{\xi}:= \int_{0}^{\infty}\sqrt{\Pr(|\xi|>t)}dt \leq L$.
\end{restatable}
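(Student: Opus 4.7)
Two-stage peeling: first replace each $h \in \Hcal_{d,\Gamma}$ by an integral over its (convex) sublevel sets to reduce the problem to a weighted empirical process indexed by $\K_d$, then decompose the noise via the layer-cake formula in $t$ and apply Lemma~\ref{lem:final-chain} slice-by-slice, integrating back in $t$.

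\textbf{Step 1 (level-set reduction).} After the shift $\tilde h := h+\Gamma \in [0,2\Gamma]$, the sublevel sets $C_{h,t}:=\{\tilde h\le t\}$ are convex, so $\tilde h(x) = \int_0^{2\Gamma}\mathbf{1}\{\tilde h(x)>t\}\,dt = \int_0^{2\Gamma}(1-\mathbf{1}_{C_{h,t}}(x))\,dt$. A termwise bound yields
\[
\sup_{h \in \Hcal_{d,\Gamma}}\Big|\tfrac{1}{n}\sum_i h(X_i)\xi_i\Big|
\le 2\Gamma\,\Big|\tfrac{1}{n}\sum_i\xi_i\Big| + 2\Gamma\sup_{C \in \K_d}\Big|\tfrac{1}{n}\sum_i\xi_i \mathbf{1}_C(X_i)\Big|.
\]
The first summand has expectation $O(\Gamma L n^{-1/2})$ which, since $d\ge 4$, is dominated by the target rate $\Gamma C_\xi n^{-2/(d+1)}$. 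It thus suffices to bound $E := \E \sup_{C\in\K_d}\big|\tfrac{1}{n}\sum_i\xi_i\mathbf{1}_C(X_i)\big|$ by $O_d(C_\xi n^{-2/(d+1)})$.

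\textbf{Step 2 (layer-cake in $\xi$).} Using $\xi_i = \int_0^\infty(\mathbf{1}\{\xi_i>t\}-\mathbf{1}\{\xi_i<-t\})\,dt$, Fubini, and the fact that $\E\xi = 0$ gives $\int_0^\infty(p_t^+ - p_t^-)\,dt = 0$ with $p_t^\pm := \Pr(\pm\xi>t)$, we may write
\[
\tfrac{1}{n}\sum_i\xi_i\mathbf{1}_C(X_i) = \int_0^\infty\bigl(Z_t^+(C)-Z_t^-(C)\bigr)\,dt,\quad Z_t^\pm(C) := \tfrac{1}{n}\sum_i\mathbf{1}\{\pm\xi_i>t,\,X_i\in C\} - p_t^\pm\PP(C).
\]
Passing $\sup_C$ inside the integral and Tonelli reduce the problem to bounding $\int_0^\infty\E\sup_C|Z_t^\pm(C)|\,dt$.

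\textbf{Step 3 (Lemma~\ref{lem:final-chain} per slice).} Fix $t$ and let $\Acal_t^+:=\{C\times(t,\infty):C\in\K_d\}\subset\R^{d+1}$ be equipped with the product measure $\PP\otimes\Pr_\xi$. Any $(\delta/p_t^+)$-bracket for $\K_d$ in $L_1(\PP)$ lifts to a $\delta$-bracket for $\Acal_t^+$. Converting the volume-bracketing bound of Lemma~\ref{lem:bro} on $\K_d^{(R)}$ to an $L_1(\PP)$-bracketing bound---by restricting to a Euclidean ball $B_d(R)$ with $R=O_d(\log n)$ so that $\PP(B_d(R)^c)$ is negligible (Lemma~\ref{lem:density-decay}) and using the uniform density bound on $B_d(R)$---gives $\log\Ncal_{[],1}(\Acal_t^+,\delta,\PP\otimes\Pr_\xi)\le c_d(p_t^+/\delta)^{(d-1)/2}$. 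Since $\PP\otimes\Pr_\xi(\bigcup\Acal_t^+)=p_t^+$, Lemma~\ref{lem:final-chain} gives
\[
\E\sup_C|Z_t^+(C)| \le \epsilon + \frac{c_d^{1/2}(p_t^+)^{(d-1)/4}}{\sqrt n}\int_\epsilon^{p_t^+}\delta^{-(d+1)/4}\,d\delta.
\]
For $d\ge 4$ the inner integral is $\Theta_d(\epsilon^{-(d-3)/4})$ (the genuinely non-Donsker regime), and balancing yields the choice $\epsilon\asymp(p_t^+)^{(d-1)/(d+1)}n^{-2/(d+1)}$, hence $\E\sup_C|Z_t^+(C)|\le O_d((p_t^+)^{(d-1)/(d+1)}n^{-2/(d+1)})$, and symmetrically for $Z_t^-$.

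\textbf{Step 4 (integration in $t$).} Since $d\ge 4$ forces $(d-1)/(d+1)\ge 1/2$ and $p_t^\pm\le 1$,
\[
\int_0^\infty (p_t^\pm)^{(d-1)/(d+1)}\,dt \le \int_0^\infty \sqrt{p_t^\pm}\,dt \le \int_0^\infty\sqrt{\Pr(|\xi|>t)}\,dt = C_\xi,
\]
so $E\le O_d(C_\xi n^{-2/(d+1)})$, and Step 1 concludes the proof.

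\textbf{Main obstacle.} The delicate point is Step 3: translating the $\vol$-based bracketing bound of Lemma~\ref{lem:bro} to an $L_1(\PP)$-based bracketing bound for $\K_d$ under a general log-concave $\PP$, with the tail of $\PP$ outside $B_d(R)$ handled separately and all dimensional prefactors kept absorbable into $O_d(\cdot)$. Notably, the exponent $(d-1)/(d+1)$ on $p_t^+$ produced by the fixed-point balance in Lemma~\ref{lem:final-chain} is exactly what is needed for the $t$-integral to be finite against the $C_\xi$ norm on the noise---a weaker moment condition would break this integrability.
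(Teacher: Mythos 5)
Your proposal takes a genuinely different route from the paper's: where the paper first invokes the multiplier inequality (Lemma~2.9.1 of van der Vaart--Wellner) to trade the noise $\xi_i$ for Rademacher signs at the cost of the $C_\xi$ factor, then de-symmetrizes via Lemma~\ref{lem:compare_Rad_avg} and finishes with Corollary~\ref{cor:conv-mcdiarmid}, you instead fold the noise into the empirical process by slicing at every threshold $t$ and apply Lemma~\ref{lem:final-chain} directly to the lifted classes $\{C\times(t,\infty)\colon C\in\mathcal{K}_d\}$ on the product space. The observation that balancing the fixed point in Lemma~\ref{lem:final-chain} produces the exponent $(d-1)/(d+1)\ge 1/2$, which then integrates against $\sqrt{p_t^\pm}$ precisely to recover the $L_{2,1}$-type norm $C_\xi$, is a clean alternative derivation of the multiplier dependence without appealing to the contraction/multiplier inequality at all. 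Steps~1, 2 and 4 are correct as written.

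Step~3, however, contains a genuine gap. Restricting to a single ball $B_d(R)$ with $R = O_d(\log n)$ does \emph{not} yield an $n$-free bracketing bound $\log\Ncal_{[],1}(\mathcal{A}_t^+,\delta)\le c_d(p_t^+/\delta)^{(d-1)/2}$. From Lemma~\ref{lem:bro} the volume bracketing of $\mathcal{K}_d^{(R)}$ scales as $(\epsilon/R^d)^{-(d-1)/2}$, and passing to $L_1(\PP)$ via a density sup $M_0$ introduces the prefactor $(R^d M_0)^{(d-1)/2}$; with $R\asymp\log n$ this contributes $\log(n)^{d(d-1)/2}$ to the log-bracket count, and pushing this through your balance inflates each slice bound, hence the final rate, by $\log(n)^{d(d-1)/(d+1)}$ --- a weaker, log-polluted rate than the lemma states. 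Removing the log factor requires exactly the shell decomposition used to prove Lemma~\ref{lem:conv-general}: write each convex set over the annuli $A_i=\{x: i\le\|x\|<i+1\}$, apply Lemma~\ref{lem:final-chain} (or allocate an $\epsilon_i$-bracket) to each shell with $\epsilon_i$ chosen proportionally to $(M_i(f)(i+1)^d)^{(d-1)/(d+1)}$, and sum the per-shell bounds using the exponential decay of $M_i(f)$ from Lemma~\ref{lem:density-decay}. This is precisely the extra machinery the paper packages inside Corollary~\ref{cor:conv-mcdiarmid}; without incorporating it into your Step~3, the argument does not deliver the stated $O_d(\Gamma C_\xi n^{-2/(d+1)})$ bound.
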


\begin{restatable}{lemma}{corone}
	\label{cor:Gen}
	Assume that $d\geq 4$. For any distribution $\PP$ with a log-concave density,
	\[
	\E\left[\sup_{f,g \in \Hcal_{d,\Gamma}}\bigg| \PP_{n}(f-g)^2 -\PP(f-g)^2 \bigg|\right] \leq   O_d(\Gamma^2\cdot n^{-\frac{2}{d+1}}).
	\]
\end{restatable}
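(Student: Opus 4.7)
My plan is to use Rademacher symmetrization combined with the Ledoux--Talagrand contraction principle to replace the squared difference $(f-g)^2$ with the linear functional $f-g$, after which a layer-cake decomposition exploits the convexity of the sublevel sets of individual convex functions and reduces the problem to Corollary~\ref{cor:conv-mcdiarmid}.

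First, I would symmetrize the class $\{(f-g)^2:f,g\in\Hcal_{d,\Gamma}\}$, then apply contraction for the map $t\mapsto t^2$, which is $(4\Gamma)$-Lipschitz on $[-2\Gamma,2\Gamma]$ and vanishes at $0$, followed by a triangle inequality to pass from $f-g$ to $f$ alone. This chain of steps yields
\[
\E\sup_{f,g}\bigl|(\PP_n-\PP)(f-g)^2\bigr|\;\le\; C\Gamma\cdot\E\sup_{f\in\Hcal_{d,\Gamma}}\Bigl|\tfrac{1}{n}\sum_i\epsilon_i f(X_i)\Bigr|
\]
for an absolute constant $C$, where $\epsilon_i$ are independent Rademacher signs.

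Next, I would shift each $f$ to $\tilde f := f+\Gamma\in[0,2\Gamma]$; the shift costs only $\Gamma\cdot|\tfrac{1}{n}\sum_i\epsilon_i|$, of expected order $\Gamma/\sqrt n$, which is dominated by $\Gamma\cdot n^{-2/(d+1)}$ as soon as $d\ge 4$. Then I invoke the layer-cake identity $\tilde f(x)=\int_0^{2\Gamma}\mathbf 1_{\{\tilde f>t\}}(x)\,dt$ and bound the resulting integral of Rademacher sums by $2\Gamma$ times the supremum over $\tilde f$ and $t$. Since $\tilde f$ is convex, each sublevel set $\{\tilde f\le t\}$ lies in $\mathcal K_d$, so $\{\tilde f>t\}$ is the complement of a convex set, and using $\mathbf 1_{C^c}=1-\mathbf 1_C$ gives
\[
\E\sup_{\tilde f}\Bigl|\tfrac{1}{n}\sum_i\epsilon_i\tilde f(X_i)\Bigr|\;\le\;2\Gamma\cdot\E\sup_{C\in\mathcal K_d}\Bigl|\tfrac{1}{n}\sum_i\epsilon_i\mathbf 1_C(X_i)\Bigr|+O(\Gamma/\sqrt n).
\]
A converse symmetrization inequality turns this Rademacher average into $\E\sup_{C\in\mathcal K_d}|\PP_n(C)-\PP(C)|$, which is $O_d(n^{-2/(d+1)})$ by Corollary~\ref{cor:conv-mcdiarmid} (the only step that uses log-concavity of $\PP$). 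Chaining the three blocks yields the announced $O_d(\Gamma^2\cdot n^{-2/(d+1)})$ bound, the various $n^{-1/2}$ remainders all being absorbed since $2/(d+1)<1/2$ for $d\ge 4$.

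The main obstacle is conceptual: the squared difference of two convex functions has neither convex sub- nor super-level sets, and has no useful representation as a difference of convex functions, so Corollary~\ref{cor:conv-mcdiarmid} cannot be applied to $(f-g)^2$ directly. The Rademacher plus contraction step is precisely what dissolves this obstruction --- reducing a quadratic functional of the class to a linear one --- after which the convexity of the individual class elements makes the level-set reduction available.
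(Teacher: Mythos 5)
Your proposal is correct and follows essentially the same route as the paper's main proof: Rademacher symmetrization and Ledoux--Talagrand contraction for $t\mapsto t^2$ to reduce to the Rademacher average of $\Hcal_{d,\Gamma}$, then a shift to nonnegative functions, the layer-cake representation, the superlevel-set-is-complement-of-convex-set observation, and finally Corollary~\ref{cor:conv-mcdiarmid}. You have inlined what the paper packages as Lemma~\ref{lem:compare_Rad_avg}, but the argument is the same in substance; your explicit accounting of the Lipschitz constant and of the sub- versus super-level-set convexity are both correct.

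One factual quibble in your closing remark: it is not true that $(f-g)^2$ ``has no useful representation as a difference of convex functions.'' In fact, for nonnegative convex $f,g$ one has $(f-g)^2 = 2f^2 + 2g^2 - (f+g)^2$, a difference of convex functions, and the paper's alternative proof of this very lemma (in the supplementary, Section~\ref{Sup:cor:Gen}) exploits exactly this identity together with the tail formula $\PP(h^2)=\int_0^{2\Gamma}2t\,\PP(h\ge t)\,dt$ to bypass symmetrization and contraction entirely. That route is arguably more elementary, avoiding the Ledoux--Talagrand inequality at the cost of the algebraic decomposition; the symmetrization-plus-contraction route you chose is the one the paper presents as primary, and it generalizes more readily since it does not rely on the DC identity.
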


We can now prove the theorem. The following holds for the LS estimator:
\begin{align*}
&\mathrm{argmin}_{f \in \Hcal_{d,\Gamma}}\sum_{i=1}^{n}(f_0(X_i) + \xi_i - f(X_i))^2 = 
\\& \mathrm{argmin}_{f \in \Hcal_{d,\Gamma}} -2\sum_{i=1}^{n}(f_0(X_i) - f(X_i))\xi_i+\sum_{i=1}^{n}(f_0(X_i) - f(X_i))^2 
\end{align*}
Using the two Lemmas \ref{corr:noise} and \ref{cor:Gen}, we can bound the minimum in the last equation as follows: \footnote{The lemmas are proven in expectation. Using standard concentration inequalities they can be extended to high probability as well.}
\[
\argmin{f \in \Hcal_{d,\Gamma} } ~\PP(f_0-f)^2 + O_d(\Gamma \cdot \max\{C_{\xi},\Gamma\}  \cdot n^{-\frac{2}{d+1}}).
\]
It remains to prove Lemmas \ref{corr:noise} and  \ref{cor:Gen}.
\qed

\subsection{Proof of Lemma \ref{corr:noise}}
In order to prove the Lemma, we shall use the following Lemma (that is proven in the end of this-subsection):
\begin{lemma}
	\label{lem:compare_Rad_avg}
	Let $\Hcal = \{h:\R^d\to [0,\Gamma]\}$ be a class of non-negative bounded functions, and let $\Ccal$ be the corresponding collection of level sets. Assume $X_1,\ldots,X_n$ are i.i.d. from $\PP$. Then
	\begin{align}
	\E \sup_{h\in \Hcal} \frac{1}{n}\sum_{t=1}^n \epsilon_i h(X_i) \leq \Gamma\cdot \E \sup_{C\in\Ccal} |\PP_n(C) - \PP(C)| + \frac{C\cdot\Gamma}{\sqrt{n}}.
	\end{align}
\end{lemma}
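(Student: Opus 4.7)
The proof is a two-stage reduction: first a layer-cake argument converts the Rademacher process indexed by $\Hcal$ into one indexed by the level-set collection $\Ccal$; then a standard de-symmetrization relates the latter to the centered empirical process $\PP_n-\PP$.

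\emph{Stage 1 (layer cake).} Because $h\in[0,\Gamma]$, I would write $h(x)=\int_0^\Gamma \mathbf{1}\{h(x)\ge t\}\,dt$ and substitute:
\[
\frac{1}{n}\sum_{i=1}^n \epsilon_i h(X_i)=\int_0^\Gamma \frac{1}{n}\sum_{i=1}^n \epsilon_i \mathbf{1}\{h(X_i)\ge t\}\,dt.
\]
Since each super-level set $\{h\ge t\}$ lies in $\Ccal$ by definition, the integrand is dominated by $\sup_{C\in\Ccal}\frac{1}{n}\sum_i \epsilon_i \mathbf{1}_C(X_i)$, which does not depend on $(h,t)$. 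Taking the supremum over $h$ outside the integral gives
\[
\sup_{h\in\Hcal}\frac{1}{n}\sum_i \epsilon_i h(X_i)\;\le\;\Gamma\cdot\sup_{C\in\Ccal}\frac{1}{n}\sum_i \epsilon_i \mathbf{1}_C(X_i),
\]
reducing the problem to controlling a (one-sided) Rademacher average of the indicator class.

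\emph{Stage 2 (de-symmetrization).} Recentering $\mathbf{1}_C=(\mathbf{1}_C-\PP(C))+\PP(C)$ and using $\PP(C)\in[0,1]$ with the triangle inequality,
\[
\sup_{C\in\Ccal}\frac{1}{n}\sum_i \epsilon_i \mathbf{1}_C(X_i)\;\le\;\sup_{C\in\Ccal}\frac{1}{n}\sum_i \epsilon_i \bigl(\mathbf{1}_C(X_i)-\PP(C)\bigr)\;+\;|\bar\epsilon|,
\]
where $\bar\epsilon=\frac{1}{n}\sum_i \epsilon_i$ satisfies $\E|\bar\epsilon|\le (\E\bar\epsilon^{\,2})^{1/2}=1/\sqrt n$, producing the additive $O(\Gamma/\sqrt n)$ term. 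For the first, now centered, term I would run the standard ghost-sample argument: introduce an independent copy $X_1',\ldots,X_n'$ of $X_1,\ldots,X_n$, write $\PP(C)=\E_{X'}\mathbf{1}_C(X_i')$, pull the $X'$-expectation outside the supremum via Jensen, and use the distributional invariance $(X_i,X_i')\leftrightarrow(X_i',X_i)$ (which flips $\epsilon_i$) together with a final triangle inequality to get
\[
\E\sup_{C\in\Ccal}\frac{1}{n}\sum_i \epsilon_i \bigl(\mathbf{1}_C(X_i)-\PP(C)\bigr)\;\le\; 2\,\E\sup_{C\in\Ccal}\bigl|\PP_n(C)-\PP(C)\bigr|.
\]
Combining Stages 1 and 2 and absorbing the absolute constants into the universal constant $C$ yields the stated inequality.

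\emph{Main obstacle.} The argument is essentially textbook, so there is no serious obstacle. The only subtlety is that the level-set class $\{\mathbf{1}_C : C\in\Ccal\}$ is not mean-zero; this forces the recentering by $\PP(C)$ in Stage 2 and produces the additive $O(\Gamma/\sqrt n)$ correction from $\E|\bar\epsilon|$, which accounts exactly for the second term on the right-hand side of the lemma.
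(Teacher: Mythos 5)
Your argument is correct and rests on the same two ingredients as the paper---the layer-cake (tail) identity and ghost-sample de-symmetrization---applied in the opposite order. You use the layer cake first, at the level of the Rademacher process, to pass from $h$ to the indicator class $\{\mathbf{1}_C : C\in\Ccal\}$, and then recenter and de-symmetrize that indicator class; the paper first recenters $h$ by $\PP(h)$ and de-symmetrizes to obtain $\E\sup_{h}|\PP_n(h)-\PP(h)|$, and only then applies the layer cake to the centered empirical process indexed by $\Hcal$. The two routes are equivalent up to the same universal constant (the factor $2$ from de-symmetrization appears in either ordering, and both produce the $O(\Gamma/\sqrt n)$ remainder from the non-centeredness of the class), so the reordering is a stylistic choice with no substantive effect on the bound.
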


It is enough to show that 
\[
	\E\left[\sup_{h \in \Hcal_{d,\Gamma}}\frac{1}{n}\sum_{i=1}^{n}\epsilon_i \cdot h(X_i)\right] \leq O_d(\Gamma \cdot n^{-\frac{2}{d+1}}),
\]
where $\epsilon_1,\ldots,\epsilon_n$ are i.i.d. independent Rademacher random variables. Indeed, by a standard result in empirical process (Lemma 2.9.1 in \citep{van1996weak}), it would follow that
\begin{equation}\label{Eq:radGau}
\E\left[\sup_{h \in \Hcal_{d,\Gamma}}\frac{1}{n}\sum_{i=1}^{n}\xi_i \cdot h(X_i)\right] \leq O_d(\Gamma \cdot C_{\xi} \cdot n^{-\frac{2}{d+1}}),
\end{equation}
where $C_{\xi}= \int_{0}^{\infty}\sqrt{\Pr(|\xi| > t)}dt$, (since the $2+\epsilon$ moment is bounded by $L$ we know that $C_\xi \leq L$).

The bound on Rademacher averages follows from Lemma~\ref{lem:compare_Rad_avg} and Corollary~\ref{cor:conv-mcdiarmid} since $h$ is convex on  $\R^{d}$ and $\PP$ is log-concave:
\begin{equation}\label{Eq:Tight}
    \E\left[\sup_{h \in \Hcal_{d,\Gamma}}\frac{1}{n}\sum_{i=1}^{n}\epsilon_i \cdot h(X_i)\right] \leq \int_{0}^{\Gamma}\E \sup_{h\in \Hcal_{d,\Gamma}}|\PP_n(h(x) \geq t) -\PP(h(x) \geq t)|dt 
\leq O_d(\Gamma \cdot n^{-\frac{2}{d+1}}). 
\end{equation}
All we need to to is to prove Lemma~\ref{lem:compare_Rad_avg}.
\subsection{Proof of Lemma~\ref{lem:compare_Rad_avg}}
	\begin{equation}\label{Eq:bla}
	\begin{aligned}
	\E\left[\sup_{h \in \Hcal}\frac{1}{n}\sum_{i=1}^{n}\epsilon_i h(X_i)\right]&=\E\left[\sup_{h \in \Hcal}\frac{1}{n}\sum_{i=1}^{n}\epsilon_i  (h(X_i)-\PP(h)+\PP(h))\right] 
	\\& \leq \E\left[|\sup_{h \in \Hcal}\frac{1}{n}\sum_{i=1}^{n}\epsilon_i  (h(X_i)-\PP(h))|\right] + \E \left[|\sup_{h \in \Hcal} \PP(h)\frac{1}{n}\sum_{i=1}^{n}\epsilon_i|\right] \\& \leq \E \left[\sup_{h \in \Hcal}|\mathbb{P}_n[h-\PP(h)]|\right] + C \cdot \Gamma \cdot n^{-0.5}, 
	\end{aligned}
	\end{equation}
	by the standard de-symmetrization argument (Lemma 2.3.6 in \citep{van1996weak}) and the fact that $h$ is nonnegative and bounded by $\Gamma$. Next, using the tail formula
	\begin{align*}
	\E \left[\sup_{h\in \Hcal}\big|\PP_n(h) -\PP(h)\big|\right] 
	&=\E\left[\sup_{h\in \Hcal}\big|\int_{0}^{\Gamma} \left(\PP_n(h(x) \geq t) -\PP(h(x) \geq t) \right)dt \big|\right]  \\
	&\leq  \int_{0}^{\Gamma} \E \sup_{h\in \Hcal} |\PP_n(h(x) \geq t) -\PP(h(x) \geq t)| dt
	\\& \leq \Gamma\cdot \E \sup_{C\in\Ccal} |\PP_n(C) - \PP(C)|,
	\end{align*}
	and the claim follows by the last two equations.
\subsection{Proof of Lemma \ref{cor:Gen}}
Since the functions in $\Hcal_{d,\Gamma}$ are uniformly bounded $\Gamma$, the contraction and  symmetrization Lemmas (see  \citep[Thms  2.1,2.3]{koltchinskii2011oracle})  imply that
\begin{align*}
\E\left[\sup_{f,g \in \Hcal_{d,\Gamma}}\bigg| \PP_{n}(f-g)^2 -\PP(f-g)^2 \bigg|\right] &\leq 8\Gamma\cdot \E\left[\sup_{f - g \in \Hcal_{d,\Gamma}-\Hcal_{d,\Gamma}}\frac{1}{n}\sum_{i=1}^{n}\epsilon_i \cdot (f-g)(X_i)\right]
\\& \leq 16 \Gamma \cdot \E\left[\sup_{h \in \Hcal_{d,\Gamma}}\frac{1}{n}\sum_{i=1}^{n}\epsilon_i h(X_i)\right].
\end{align*}
By Lemma~\ref{lem:compare_Rad_avg} and Corollary~\ref{cor:conv-mcdiarmid} , the last term is bounded by $O_d(\Gamma^2 \cdot n^{-\frac{2}{d+1}})$, and the lemma follows. We also provide an alternative proof to this Lemma that uses only Corollary~\ref{cor:conv-mcdiarmid} (see sub-section~\ref{S-Sup:cor:Gen} in the Supplementary).
\section{Proof of Theorem \ref{thm:hellinger}, Upper Bound} \label{sec:hellinger_upper}
\citet{carpenter2018near} used the properties of the log-concave distributions and squared Hellinger loss (Section~3 in their paper) and showed that the MLE risk is upper bounded by
\begin{equation}\label{Eq:KingDiako}
\Rcal_{h^2}(\hat{f}) \le C(d)\cdot \log(n) \cdot \E \left[\sup_{C \in \mathcal{K}_d} \left| \PP_n(C) - \PP(C)\right| \right],
\end{equation}
where the expectation is over the draw of $n$ i.i.d. samples from $\PP$. From Corollary \ref{cor:conv-mcdiarmid}, the expected supremum is $\Theta_d(n^{-2/(d+1)})$, and the rate in the theorem follows.

\citet{carpenter2018near} achieved a rate of $O_d(n^{-\frac{2}{d+3}}\log(n)^2)$ by utilizing the VC-dimension of polytopes with restricted number of facets. Our improvement in the bound follows from a different proof technique: We use the $\epsilon$-bracketing numbers of the convex sets under the volume metric, chaining methods, and Lemma~\ref{lem:density-decay} which certifies that isotropic log-concave have a ``fast" enough decay. For completeness, we give the sketch of the proof of \eqref{Eq:KingDiako} in Section~\ref{sec:log-concave-pr} in the Supplementary.

\section{Proof of Lemma~\ref{lem:final-chain}} \label{sec:pr-final-chain}

Given some measure $\PP$ over $\mathbb{R}^d$ and a family $\mathcal{A}$ of subsets of $\mathbb{R}^d$, we will bound 
\[
\E\sup_{S \in \mathcal{A}} |\PP_n(S) - \PP(S)|
\]
in terms of the bracketing numbers of $\mathcal{A}$ with respect to the natural metric induced by $\PP$ on sets, namely the measure of the symmetric difference $\PP(S \triangle S')$ of two sets $S,S'\subset \R^d$  (equivalently, $L_1(\PP)$ on sets).  The derived bound is a variant of Dudley's integral, proved using the chaining method on ``sub-Exponential'' random variables. Let $r \ge 0$ be an integer and assume that $\log \mathcal{N}_{[],1}(\mathcal{A},2^{-r},\PP) \le 2^{-r} n /3$. We will now prove that 
\begin{equation} \label{eq:36}
\E\left[ \sup_{S \in \mathcal{A}}|\PP_n(S)-  \PP(S)| \right]
\le 2^{-r} + C \sum_{i=r_0}^{r}
n^{-0.5}\sqrt{2^{-i} \log \mathcal{N}_{[],1}(\mathcal{A},2^{-r},\PP)},
\end{equation}
where $r_0$ be the smallest integer $i$ such that $2^{-i} < \PP(\bigcup \mathcal{A})$.

For convenience, denote $\tPP := \PP_n - \PP$. Let $\mathcal{N}_i$ be the minimal net $2^{-i}$-net with bracketing for $\mathcal{A}$ with respect to $L_1(\PP)$. Fix some $S \in \mathcal{A}$ and let $\overline{S},\underline{S} \in \mathcal{N}_r$ be elements such that $\underline{S} \subseteq S \subseteq \overline{S}$ and $\PP(\overline{S} \setminus \underline{S}) \le 2^{-r}$. Then
\[
\tPP(S) = \PP_n(S) - \PP(S) 
= (\PP_n(\overline{S}) - \PP(\overline{S})) + (\PP(\overline{S}) - \PP(S)) 
\le \max_{S \in \mathcal{N}_r} |\tPP(S)| + 2^{-r}.
\]
Similarly,
\[
- \tPP(S) = (\PP(S) - \PP_n(S)) \le (\PP(S) - \PP(\underline{S})) + (\PP(\underline{S}) - \PP_n(\underline{S})) \le  2^{-r} + \max_{S \in \mathcal{N}_r} |\tPP(S)|.
\]
We conclude that 
\begin{equation}\label{eq:78}
\E \sup_{S\in \mathcal{A}} |\tPP(S)|
\le \E \max_{S\in \mathcal{N}_r} |\tPP(S)| + 2^{-r}.
\end{equation}

%We create nets for $\mathcal{A}$ of increasing accuracy, $(\mathcal{N}_i)_{i=r_0-1}^{r-1}$, where $\mathcal{N}_i$ is a $2^{-i}$ net for $\mathcal{A}$ with respect to $L_1(\PP)$ of optimal size, namely, $|\mathcal{N}_i| = \mathcal{N}_{1}(\mathcal{A},2^{-i},\PP)$. 
We can assume that $\mathcal{N}_{r_0-1} = \{ \emptyset\}$. Indeed, from the conditions of this lemma, any $S \in \mathcal{A}$ satisfies $\PP(S \triangle \emptyset) = \PP(S) \le \PP(\bigcup \mathcal{A}) \le 2^{-r_0+1}$,  hence $\{\emptyset\}$ is a $2^{-r_0-1}$-net as required. As in a standard chaining argument, we write $\tPP(S)$ as a telescoping sum, with respect to elements in the different epsilon nets: let $\pi_i(S) = \arg\min_{S' \in \mathcal{N}_i} \PP(S \triangle S')$, and note that for any $S \in \mathcal{N}_r$,
	\[
	\tPP(S)
	= \tPP(\pi_r(S)) - \tPP(\pi_{r_0-1}(S))
	= \sum_{i = r_0}^r \left(\tPP(\pi_i(S)) - \tPP(\pi_{i-1}(S))\right),
	\]
where the first equality follows from the fact that $\pi_r(S) = S$ since $S \in \mathcal{N}_r$ and that $\pi_{r_0-1}(S) = \emptyset$.
This enables us to bound the maximum over $\mathcal{N}_r$ in terms of differences between elements of consecutive nets:

\begin{equation}
\begin{aligned} \label{eq:33}
\E\left[ \max_{S \in \mathcal{N}_r} |\tPP(S)| \right]
&= \E\left[ \max_{S \in \mathcal{N}_r}  \left| \sum_{i=r_0}^r \tPP(\pi_i(S)) - \tPP(\pi_{i-1}(S))\right|\right]
\\&\le \sum_{i=r_0}^r
\E\left[\max_{S \in \mathcal{N}_r}\left| \tPP(\pi_i(S)\triangle \pi_{i-1}(S)) \right|\right] 
%\\&= \sum_{i=r_0}^r\E\left[\max_{S \in \mathcal{N}_{r},(\pi_i(S),\pi_{i-1}(S))\in \mathcal{N}_{i} \times \mathcal{N}_{i-1}}\left| \tPP(\pi_i(S)\triangle \pi_{i-1}(S)) \right|\right].
%  \\&
% = \E\left[ \max_{S \in \mathcal{N}_r} \left| \sum_{i=r_0}^r \tPP(\pi_i(S)\triangle \pi_{i-1}(S)) - \tPP(\pi_{i-1}(S)\triangle \pi_{i}(S))\right| \right] \\&
% + \sum_{i=r_0}^r
% \E\left[\max_{S \in \mathcal{N}_r}\left| \tPP(\pi_{i-1}(S) \triangle \pi_i(S)) \right|\right].
\end{aligned}
\end{equation}
Note that the right hand side of \eqref{eq:33}, which is composed of a maximum of variables $\tPP(U)$ for sets $U \in \mathbb{R}^d$. 
Note that for each set $U$, $\PP_n(U)$ is a binomial random variable $\Bin(n, \PP(U))$. Depending on the size of the deviation relative to $n$ and $p$, a binomial variable can exhibit sub-Gaussian or sub-Exponential behavior. By balancing the two tails (see e.g. \citep[Corollary 2.6]{boucheron2013concentration}), we obtain the following lemma, under the condition that the number of variables is not too large:
\begin{lemma}\label{lem:exp-sup}
	Fix $0 < p < 1$ and  $n,k$ are positive integers. Assume that $Y_1, \dots, Y_k$ are random variables such that $Y_i \sim \Bin(n,p_i)$ for $p_i \le p$. Then, if $k \ge 2$ and $\log k \le n p/3$, then
	\begin{equation} \label{eq:2}
	\E\left[ \max_{1 \le i \le k} |Y_i/n - p_i| \right]
	\le C \sqrt{\frac{p \log k}{n}}.
	\end{equation}
\end{lemma}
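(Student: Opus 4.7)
The plan is to apply Bernstein's inequality to each binomial $Y_i$ separately, then combine via a union bound, and finally convert the tail bound into an expectation bound by integrating. The hypothesis $\log k \le np/3$ should be exactly what is needed to ensure that the sub-Gaussian part of the Bernstein tail dominates at the relevant scale $t^* \asymp \sqrt{p\log k/n}$.

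More concretely, first I would recall the Bernstein-type inequality for a binomial variable $Y \sim \mathrm{Bin}(n,q)$: for every $t > 0$,
\[
\Pr\bigl(|Y/n - q| \ge t\bigr) \;\le\; 2\exp\!\left(-\frac{nt^2}{2q(1-q) + 2t/3}\right) \;\le\; 2\exp\!\left(-\frac{nt^2}{2p + 2t/3}\right),
\]
where the second inequality uses $p_i \le p$. A union bound over $i = 1, \ldots, k$ then yields
\[
\Pr\!\left(\max_{1\le i\le k} |Y_i/n - p_i| \ge t\right) \;\le\; 2k\exp\!\left(-\frac{nt^2}{2p + 2t/3}\right).
\]

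Next I would set $t^{*} := c_0 \sqrt{p\log k/n}$ for a sufficiently large absolute constant $c_0$. The key observation, using the hypothesis $\log k \le np/3$, is that
\[
t^{*} \;=\; c_0\sqrt{\frac{p\log k}{n}} \;\le\; c_0\sqrt{\frac{p \cdot np/3}{n}} \;=\; \frac{c_0}{\sqrt{3}}\, p,
\]
so $t^{*} = O(p)$. Consequently, for every $t$ in a suitable range above $t^{*}$ (specifically, while $t \le 3p$) the linear term $2t/3$ in the Bernstein denominator is dominated by $2p$, and the tail bound collapses to the sub-Gaussian estimate $2k\exp(-nt^2/(4p))$. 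For $t \ge 3p$, the sub-Exponential term $2k\exp(-3nt/4)$ takes over, but by that point $t$ is already much larger than $t^{*}$ and the contribution is lower order.

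Finally, I would bound the expectation by the layer-cake formula:
\[
\E\Bigl[\max_i |Y_i/n - p_i|\Bigr] \;\le\; t^{*} + \int_{t^{*}}^{3p} 2k\exp\!\left(-\frac{nt^2}{4p}\right)dt + \int_{3p}^{\infty} 2k\exp\!\left(-\frac{3nt}{4}\right)dt.
\]
The Gaussian integral is bounded by a constant times $\sqrt{p/n} \cdot 2k\exp(-n(t^{*})^2/(4p)) = 2k^{1 - c_0^2/4}\sqrt{p/n}$, which for $c_0$ large enough is at most $\sqrt{p\log k/n}$ (using $k \ge 2$). The sub-Exponential integral is bounded by $(8k/(3n))\exp(-9np/4)$, which is exponentially small and absorbed into the constant. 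Combining these three contributions gives the claimed bound $C\sqrt{p\log k/n}$.

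The main obstacle, if any, is purely bookkeeping: verifying that $t^{*}$ indeed lies in the sub-Gaussian regime (which is exactly what the hypothesis $\log k \le np/3$ guarantees), and choosing the constant $c_0$ large enough so that the Gaussian tail integral starting from $t^{*}$ is dominated by $t^{*}$ itself.
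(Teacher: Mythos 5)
Your proposal is correct and is essentially the same approach as the paper's. The paper invokes a packaged maximal inequality for sub-gamma random variables (Corollary 2.6 in Boucheron--Lugosi--Massart), which directly yields $\E\max_i |Y_i/n - p_i| \le \sqrt{2(p/n)\log(2k)} + \frac{\log(2k)}{3n}$, and then uses $\log k \le np/3$ to absorb the sub-exponential correction $\frac{\log(2k)}{3n}$ into $C\sqrt{p\log k/n}$. You rederive that same maximal inequality by hand: Bernstein's inequality, union bound, and a layer-cake integration with the crossover point at $t \asymp p$. Both proofs hinge on the same identity of roles for the hypothesis $\log k \le np/3$ (ensuring the sub-Gaussian part of the Bernstein tail dominates at scale $t^{*} \asymp \sqrt{p\log k/n}$), so the substance is the same; you simply unwind the cited lemma.

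One small bookkeeping remark, to make the last step airtight: in bounding the sub-exponential integral $\frac{8k}{3n}e^{-9np/4}$, you should note that the hypotheses $k \ge 2$ and $\log k \le np/3$ jointly force $np \ge 3\log 2$, so $np$ is bounded away from zero. With that in hand, $k e^{-9np/4} \le e^{np/3 - 9np/4} = e^{-23np/12}$ decays fast enough in $np$ that the whole term is dominated by $\sqrt{p/n} \le \sqrt{p\log k / (n\log 2)}$, which is absorbed into the constant as you claim. Without the lower bound $np \ge 3\log 2$ this absorption would not be automatic, so it is worth stating explicitly. You should also fix a concrete $c_0$ (any $c_0 \in [2\sqrt{2},\, 3\sqrt{3}]$ works) so that simultaneously $t^{*} \le 3p$ (keeping $t^{*}$ in the sub-Gaussian regime) and $k^{1-c_0^2/4} \le 1$ (making the Gaussian tail integral $O(\sqrt{p/n})$).
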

Next, we bound each summand of the right hand side of \eqref{eq:33}. Fix $i = r_0, \dots, r$ and $S \in \mathcal{A}$, note that %and we will bound the summation term with respect to $i$
$n\PP_n(\pi_i(S)\triangle \pi_{i-1}(S)) \sim \Bin(n,p)$, with
\[
p := \PP(\pi_i(S) \triangle \pi_{i-1}(S)) 
%	\le \PP(d_i(S),\pi_{i-1}(S))
\le \PP(\pi_i(S) \triangle S) + \PP(S  \triangle \pi_{i-1}(S))
\le 2^{-i} + 2^{-i+1}
= 3 \cdot 2^{-i}.
\]
Additionally, the maximum is over at most $\mathcal{N}_{[],1}(\mathcal{A},2^{-i},\PP) \cdot \mathcal{N}_{[],1}(\mathcal{A},2^{-i+1},\PP)$ elements, hence we can apply 
Lemma~\ref{lem:exp-sup} with $p = 3 \cdot 2^{-i}$ and $k \le \mathcal{N}_{[],1}(\mathcal{A},2^{-i},\PP) \mathcal{N}_{[],1}(\mathcal{A},2^{-i+1},\PP)$ to bound this maximum. Note that Lemma~\ref{lem:exp-sup} requires $\log k \le np/3$, and indeed using the assumption of this Lemma we know that
	$ \log \mathcal{N}_{[],1}(\mathcal{A},2^{-i},\PP) \le 2^{-i} n /3$.
	Hence,
	\[
	\log k
	\le \log \mathcal{N}_{[],1}(\mathcal{A},2^{-i},\PP) + \log \mathcal{N}_{[],1}(\mathcal{A},2^{-i+1},\PP)
	\le 2 \cdot 2^{-i}n /3
	\le np/3,
	\]
	substituting $p =3 \cdot 2^{-i}$.
	Applying Lemma~\ref{lem:exp-sup}, one obtains that for each $i = r_0,\dots,r$:
% 	\footnote{ Note that Lemma~\ref{lem:exp-sup} requires that $k \ge 2$. However, since, $\mathcal{N}_{[],1}(\mathcal{A},\PP)\ge 2$ for all $\epsilon$, this lemma can be applied.}
	\begin{align*}
	\E\left[ \max_{S \in \mathcal{N}_{r}} \left|\tPP(\pi_i(S)\triangle \pi_{i-1}(S))\right| \right] \le C \sqrt{2^{-i} \log \mathcal{N}_{[],1}(\mathcal{A},2^{-i},\PP)/n}.	    
	\end{align*}
	 We sum over $ i = r_0,\dots,r $ and conclude that  
	\[
	\E\left[ \max_{S \in \mathcal{N}_r} |\tPP(S)| \right]
	\le \sum_{i=r_0}^r 2C \sqrt{2^{-i} \log \mathcal{N}_{[],1}(\mathcal{A},2^{-i},\PP)/n}.
	\]
Combining with 	\eqref{eq:78}, then \eqref{eq:36} follows. 

\section{Proof of Corollary ~\ref{cor:conv-mcdiarmid}}
First, we state a lemma for arbitrary probability measures with a continuous density function.
\begin{lemma} \label{lem:conv-general}
	Let $\PP = f(x)dx$ be a probability measure with a continuous density function, and for any $r \ge 0$, let $M_r(f) := \sup_{x \colon \|x\| \ge r}  f(x) \cdot d \cdot \vol(B_d)$ denote a bound on the tail of $f$. Let $X_1, \dots, X_n \sim \PP$ be i.i.d. samples. Then,
	\begin{equation} \label{eq:21}
	\E\left[\sup_{K \in \mathcal{K}_d} |\PP_n(K) - \PP(K)|\right]
	\le  C \cdot \left(\sum_{i=0}^\infty 
	M_i(f)^{\frac{d-1}{d+1}} (i+1)^\frac{d(d-1)}{d+1}\right) \cdot n^{-\frac{2}{d+1}}.
	\end{equation}
	
    If we further assume that $\mathrm{supp}(\PP) \subset R \cdot B_d$, for some $R > 0$. Then, the following holds:
    	\begin{equation} \label{eq:211}
	\E\left[\sup_{K \in \mathcal{K}_d} |\PP_n(K) - \PP(K)|\right]
	\le  C(R^dM_0(f))^{\frac{d-1}{d+1}}n^{-\frac{2}{d+1}}.
	\end{equation}
% 	where $M(f) := \max_{x \in \mathrm{supp}(\PP)} f(x) \cdot d \cdot \vol(B_d)$.  
\end{lemma}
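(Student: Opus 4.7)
}

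The plan is to apply the set-indexed chaining inequality of Lemma~\ref{lem:final-chain} after reducing to bracketing numbers with respect to volume (Lemma~\ref{lem:bro}). I will first handle the bounded-support estimate \eqref{eq:211}, then bootstrap it by an annular decomposition to obtain the general bound \eqref{eq:21}.

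\textbf{Step 1: the bounded case \eqref{eq:211}.} Since $\mathrm{supp}(\PP)\subseteq R\cdot B_d$, every level set of every convex body may be replaced by its intersection with $R\cdot B_d$, so I take $\mathcal{A}=\mathcal{K}_d^{(R)}$. The density is uniformly bounded by $M_0(f)/(d\,\vol(B_d))$, hence $\PP(K_1\triangle K_2)\le (M_0(f)/d\vol(B_d))\,\vol(K_1\triangle K_2)$, which converts the volume bracketing bound of Lemma~\ref{lem:bro} into
\[
\log\mathcal{N}_{[],1}(\mathcal{K}_d^{(R)},\delta,\PP)\le C_d \,\bigl(R^d M_0(f)\bigr)^{(d-1)/2}\,\delta^{-(d-1)/2}.
\]
Plugging this into \eqref{eq:35} of Lemma~\ref{lem:final-chain}, the integrand becomes a constant times $\delta^{-(d+1)/4}$. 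For $d\ge 4$ the exponent exceeds $1$, so the integral from $\epsilon$ to $1$ is dominated by its lower endpoint and is of order $\epsilon^{-(d-3)/4}$. Balancing $\epsilon$ with the resulting $n^{-1/2}(R^d M_0(f))^{(d-1)/4}\epsilon^{-(d-3)/4}$ term gives the optimal choice $\epsilon\asymp (R^dM_0(f))^{(d-1)/(d+1)}n^{-2/(d+1)}$, which yields \eqref{eq:211}. I will also check that this $\epsilon$ satisfies the admissibility condition $\log\mathcal{N}_{[],1}(\mathcal{A},\epsilon,\PP)\le n\epsilon/3$ required by Lemma~\ref{lem:final-chain}; a direct substitution shows both sides are of the same order $n\epsilon$ up to a $d$-dependent constant, which can be absorbed by enlarging $\epsilon$ by a dimensional factor.

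\textbf{Step 2: annular decomposition for \eqref{eq:21}.} Let $A_i=(i+1)B_d\setminus iB_d$ for $i\ge 0$. Because $\{A_i\}$ is a disjoint cover of $\mathbb{R}^d$,
\[
\sup_{K\in\mathcal{K}_d}|\PP_n(K)-\PP(K)|\;\le\;\sum_{i=0}^{\infty}\sup_{K\in\mathcal{K}_d}|\PP_n(K\cap A_i)-\PP(K\cap A_i)|.
\]
Define $\mathcal{A}_i:=\{K\cap A_i:K\in\mathcal{K}_d\}$. Although the sets in $\mathcal{A}_i$ are not themselves convex, a bracket for $K$ by convex bodies $K_1\subseteq K\subseteq K_2$ in $(i{+}1)B_d$ induces a bracket $K_1\cap A_i\subseteq K\cap A_i\subseteq K_2\cap A_i$, whose symmetric-difference volume is no larger than $\vol(K_1\triangle K_2)$. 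Hence Lemma~\ref{lem:bro} gives a volume-bracketing bound for $\mathcal{A}_i$ of the same form as for $\mathcal{K}_d^{(i+1)}$. On $A_i$ the density is at most $M_i(f)/(d\vol(B_d))$, so the corresponding $L_1(\PP)$-bracketing number satisfies
\[
\log\mathcal{N}_{[],1}(\mathcal{A}_i,\delta,\PP)\le C_d\,(i+1)^{d(d-1)/2}\,M_i(f)^{(d-1)/2}\,\delta^{-(d-1)/2}.
\]

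\textbf{Step 3: chaining on each annulus and summation.} Applying Lemma~\ref{lem:final-chain} to each $\mathcal{A}_i$ exactly as in Step~1, the optimal truncation is $\epsilon_i\asymp (i+1)^{d(d-1)/(d+1)}M_i(f)^{(d-1)/(d+1)}n^{-2/(d+1)}$, which yields
\[
\E\sup_{S\in\mathcal{A}_i}|\PP_n(S)-\PP(S)|\;\le\;C_d\,M_i(f)^{(d-1)/(d+1)}(i+1)^{d(d-1)/(d+1)}n^{-2/(d+1)}.
\]
Summing over $i\ge 0$ produces \eqref{eq:21}. Corollary~\ref{cor:conv-mcdiarmid} then follows because, by Lemma~\ref{lem:density-decay}, the coefficients $M_i(f)$ for an isotropic log-concave density decay exponentially in $i$, making the series $\sum_i M_i(f)^{(d-1)/(d+1)}(i+1)^{d(d-1)/(d+1)}$ finite and bounded by a dimensional constant.

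\textbf{Main obstacle.} The delicate point is not the chaining itself but keeping the admissibility hypothesis of Lemma~\ref{lem:final-chain} valid uniformly across the annular pieces and verifying that for $d\ge 4$ the Dudley-type integral is indeed dominated by its lower endpoint (so that the whole complexity is encoded by a single fixed-point scale $\epsilon_i$). Tracking the dependence on $(i+1)^{d(d-1)/2}$ through the optimization, and confirming it produces exactly the exponent $d(d-1)/(d+1)$ appearing in \eqref{eq:21}, is the most bookkeeping-heavy part of the argument.
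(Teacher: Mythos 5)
Your proposal is correct and takes essentially the same route as the paper. The paper organizes the argument by extracting a standalone ``one-shell'' lemma (Lemma~\ref{lem:one-shell}), proving it for $R=1$ and then rescaling, whereas you fold the $R$-dependence directly into the bracketing bound for each annulus; this is only a cosmetic difference. Your annular decomposition $A_i=(i+1)B_d\setminus iB_d$, the conversion of the volume bracketing numbers of Lemma~\ref{lem:bro} into $L_1(\PP)$ bracketing numbers via the pointwise density bound $M_i(f)/(d\,\vol(B_d))$ on $A_i$, the observation that a convex bracket $K_1\subseteq K\subseteq K_2$ induces a bracket for $K\cap A_i$, the application of Lemma~\ref{lem:final-chain}, the observation that for $d\ge 4$ the integrand $\delta^{-(d+1)/4}$ makes the entropy integral lower-endpoint dominated, the fixed point $\epsilon_i\asymp (i+1)^{d(d-1)/(d+1)}M_i(f)^{(d-1)/(d+1)}n^{-2/(d+1)}$, and the verification that this $\epsilon_i$ satisfies the admissibility hypothesis $\log\mathcal{N}_{[],1}\le n\epsilon/3$ up to a dimensional constant, all match the paper's argument.
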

First, we show that the Corollary \ref{cor:conv-mcdiarmid} follows from the aforementioned lemma. In the following sub-sections we will prove Lemma~\ref{lem:conv-general}. 

\begin{proof}[Proof of Corollary ~\ref{cor:conv-mcdiarmid}]
	First, if $\PP$ is isotropic log-concave, then, by Lemma~\ref{lem:density-decay}, 
	$f(x) \le e^{-c(d)\|x\|_2 + C(d)}$, and the claim follows by Lemma~\ref{lem:conv-general}. Now assume that $\PP$ is not isotropic, and that the covariance of $f$ is invertible (and a similar proof holds when it is singular).
	Then, there exists an invertible linear transformation $A$, such that $\PP^A$ is isotropic, where $\PP^A(S) := \PP(A^{-1}(S))$ for any measurable set $S \subset \R^d$. 
	Note that $\mathcal{K}_d = \{ T(K) \colon K \in \mathcal{K}_d\}$ for any invertible linear transformation $T$. Hence,
	\begin{align*}
	\E \sup_{K \in \mathcal{K}_d} | \PP_n(K) - \PP(K)|&
	= \E \sup_{K \in \mathcal{K}_d} | \PP^A_n(A^{-1}(K)) - \PP^A(A^{-1}(K))|
	= \E \sup_{K' \in \mathcal{K}_d} | \PP^A_n(K') - \PP^A(K')|
	\\&= O_d(n^{-2/(d+1)}),
	\end{align*}
	where the last step follows from the fact that $\PP^A$ is isotropic log-concave.  
\end{proof}
\begin{proof}[Proof of Lemma \ref{lem:conv-general}]\label{sec:conclude-main-lemma}
Note that we would like to bound the supremum of $\tPP(S)$ for any convex set $S$ which is not necessarily bounded. However, it would be convenient to consider bounded convex sets. In particular, we define the disjoint sets $A_0,A_1,A_2$ that satisfy $\bigcup_{i=0}^\infty A_i = \mathbb{R}^d$: $A_i := \{ x\in \mathbb{R}^d \colon i \le \|x\|_2 < i+1 \}$. We will write each convex set $i$ as a disjoint union of its intersections with the above sets $S = \bigcup_{i=0}^\infty S \cap A_i$. We will bound the supremum corresponding to each $A_i$ separately:	\begin{equation} \label{eq:3}
\E \sup_{K\in \mathcal{K}_d} |\tPP(K)|
\le \sum_{i=0}^\infty \E\sup_{K\in \mathcal{K}_d} |\tPP(K \cap A_i)|.
\end{equation}
We bound each summand, by using the following lemma that bounds $\E \sup_{S\in \mathcal{K}_d} |\tPP(S \cap A)|$ for a general bounded set $A$. Denote by $C_N(d)$ the dimensional constant of the bound on $\log \mathcal{N}_{[],1}(\K,\epsilon, \vol)$, namely, the minimal number such that $\log \mathcal{N}_{[],1}(\K,\epsilon,\vol) \le C_N(d) \epsilon^{-(d-1)/2}$ for all $0 < \epsilon \le 1$.

\begin{lemma} \label{lem:one-shell}
	Assume that $\PP = f(x)dx$ where $f$ is an arbitrary density function. Let $A$ be a bounded measurable set, and let $R = \sup_{x \in A} \|x\|_2$ and $M = \sup_{x \in A} f(x)$. 
	Then,
	\begin{equation}\label{eq:1}
	\E\left[\sup_{C \in \mathcal{K}_d} \left| \PP_{n}(C \cap A) - \PP(C \cap A) \right| \right]
	\le C \cdot \left( \frac{C_N(d)}{n}\right)^{\frac{2}{d+1}} (R^d M)^{\frac{d-1}{d+1}}.
	\end{equation}
\end{lemma}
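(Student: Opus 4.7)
\medskip

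The plan is to invoke Lemma~\ref{lem:final-chain} on the class $\mathcal{A}_R := \{K \cap A : K \in \mathcal{K}_d^{(R)}\}$. Since $A \subseteq R \cdot B_d$, any convex set $C$ satisfies $C \cap A = (C \cap RB_d) \cap A$, so the supremum in \eqref{eq:1} equals the supremum over $\mathcal{A}_R$. The main work is to (i) translate the volume bracketing entropy of $\mathcal{K}_d^{(R)}$ into an $L_1(\PP)$ bracketing entropy of $\mathcal{A}_R$, and (ii) balance the two terms in the Dudley-type bound of Lemma~\ref{lem:final-chain}.

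For (i), given a volume bracketing pair $(\underline{K}, \overline{K})$ for $\mathcal{K}_d^{(R)}$ at scale $\delta/M$, the pair $(\underline{K} \cap A, \overline{K} \cap A)$ brackets $K \cap A$ in $L_1(\PP)$ with
\[
\PP\bigl((\overline{K} \setminus \underline{K}) \cap A\bigr) \le M \cdot \vol(\overline{K} \setminus \underline{K}) \le \delta,
\]
since $f \le M$ on $A$. Rescaling $\mathcal{K}_d^{(1)}$ to $\mathcal{K}_d^{(R)}$ multiplies volumes by $R^d$, so by the definition of $C_N(d)$,
\[
\log \mathcal{N}_{[],1}(\mathcal{A}_R, \delta, \PP) \le \log \mathcal{N}_{[],1}\bigl(\mathcal{K}_d^{(R)}, \delta/M, \vol\bigr) \le C_N(d) \bigl(R^d M / \delta\bigr)^{(d-1)/2}.
\]

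For (ii), I would set
\[
\epsilon^{*} := c \cdot \bigl(C_N(d)\bigr)^{2/(d+1)} (R^d M)^{(d-1)/(d+1)} n^{-2/(d+1)}
\]
for an appropriate absolute constant $c$; this choice makes $\log \mathcal{N}_{[],1}(\mathcal{A}_R, \epsilon^{*}, \PP) \le \epsilon^{*} n/3$, satisfying the hypothesis of Lemma~\ref{lem:final-chain}. Plugging the entropy bound into the Dudley integral,
\[
\frac{C}{\sqrt{n}}\int_{\epsilon^{*}}^{\epsilon_0}\sqrt{\delta^{-1}\log \mathcal{N}_{[],1}(\mathcal{A}_R,\delta,\PP)}\, d\delta \le \frac{C\sqrt{C_N(d)}(R^d M)^{(d-1)/4}}{\sqrt{n}} \int_{\epsilon^{*}}^{\epsilon_0} \delta^{-(d+1)/4}\, d\delta.
\]
For $d \ge 4$, $(d+1)/4 > 1$, so the integral is dominated by its lower endpoint and is bounded by $\frac{4}{d-3}(\epsilon^{*})^{(3-d)/4}$. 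A direct computation shows this contribution has the same order as $\epsilon^{*}$ itself, yielding the claimed bound.

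Finally, I would dispose of the edge case where $\epsilon^{*} \ge \epsilon_0$ (which happens when $n R^d M$ is small) by the trivial estimate $\E\sup_{S} |\PP_n(S) - \PP(S)| \le \PP(A) \le M \cdot \vol(R B_d) \lesssim R^d M$; in this regime one checks that $R^d M$ is already dominated by the right-hand side of \eqref{eq:1}. The principal obstacle is purely bookkeeping of dimensional constants in the verification of $\log \mathcal{N}_{[],1}(\mathcal{A}_R, \epsilon^{*}, \PP) \le \epsilon^{*} n/3$ and in matching the integral contribution with $\epsilon^{*}$; the strategic ingredients (level-set reduction plus chaining with mixed sub-Gaussian/sub-Exponential increments, packaged in Lemma~\ref{lem:final-chain}) do all the real work.
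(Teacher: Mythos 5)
Your proposal is correct and follows essentially the same route as the paper: apply Lemma~\ref{lem:final-chain}, convert volume bracketing of convex bodies into $L_1(\PP)$ bracketing of the intersected class via the density bound $f\le M$ on $A$, and balance the resulting Dudley-type integral against the additive $\epsilon$ term. The one cosmetic difference is that you carry the scale $R$ through the bracketing estimate directly (using the $R^d$ volume scaling inside the exponent), whereas the paper first normalizes to $R=1$ and recovers the general case at the end by the rescaling $x\mapsto x/R$ (their Eq.~(\ref{eq:22})); the two are interchangeable bookkeeping choices. One small remark: the constant $c$ in your choice of $\epsilon^{*}$ must be at least $3^{2/(d+1)}$ (so e.g.\ $c=3$ works uniformly) for the hypothesis $\log\mathcal{N}_{[],1}(\mathcal{A}_R,\epsilon^{*},\PP)\le\epsilon^{*}n/3$ to hold, and this is exactly what the paper means when it says its $\epsilon$ is ``the smallest one (up to an absolute constant)'' satisfying that inequality.
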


Finally, we can prove Lemma~\ref{lem:conv-general}. 
	For any set $A_i,  i \ge 0$, we can apply Lemma~\ref{lem:one-shell} with $R = (i+1)$ and $M = M_i(f) = \sup_{x \colon \| x \| \ge i} f(x)$, and obtain that
	\[
	\E \sup_{K\in \mathcal{K}_d} |\tPP(K \cap A_i)|
	\le C \cdot \left( \frac{C_N(d)}{n}\right)^{\frac{2}{d+1}} (i+1)^{\frac{d(d-1)}{d+1}} M_i(f)^{\frac{d-1}{d+1}}.
	\]
	By \eqref{eq:3}, we derive that 
	\[
	\E \sup_{K\in \mathcal{K}_d} |\tPP(K)| \leq C \cdot \left( \frac{C_N(d)}{n}\right)^{\frac{2}{d+1}} 
	\sum_{i=0}^\infty 
	M_i(f)^{\frac{d-1}{d+1}} (i+1)^\frac{d(d-1)}{d+1}.
	\]
	Using the fact $C_N(d) \leq C_1 d^{(d+4)/2} \vol(B_d)^{(d-1)/2}$,  the lemma follows. Our last debt is  to prove  Lemma~\ref{lem:one-shell}.
\end{proof}
\begin{proof}[Proof of Lemma~\ref{lem:one-shell}]
	
	First, we will assume that $R = 1$, and then we will extend to arbitrary values of $R$. In particular, it suffices to consider only convex sets contained in $B_d$.
	Define $\mathcal{A}= \{ A \cap K \colon K \in \K \}$, and it holds that
	\begin{equation} \label{eq:331}
	\E \sup_{C \in \mathcal{K}_d} |\tPP(C \cap A)|
	= \E \sup_{S \in \mathcal{A}} |\tPP(S)|.
	\end{equation}
	We would like to apply Lemma~\ref{lem:final-chain}: For this purpose, we  use the following bound on the bracketing numbers of $\mathcal{A}$: for any $0 < \epsilon \le \PP(\cup\mathcal{A})$,
	\begin{equation} \label{eq:333}
	\log \mathcal{N}_{[],1}(\mathcal{A},\epsilon, \PP) 
	\le \log \mathcal{N}_{[], 1}(\K,\epsilon/M, \vol)
	\le C_N(d) \left( \frac{M}{\epsilon} \right)^{\frac{d-1}{2}},
	\end{equation}
	where the first part of the inequality follows from the fact that for any $S,S' \in \mathcal{A}$, $\PP(S \triangle S') \le \vol(S \triangle S') \sup_{x \in A} f(x) = M \vol(S \triangle S')$, and the second part follows Bronstein's lemma (see Lemma \ref{lem:bro}).
	We set $\epsilon_0 = \PP(\cup\mathcal{A})$ and $\epsilon$  that will be define later. Then, Lemma~\ref{lem:final-chain} implies
\begin{equation} \label{eq:111}
\begin{aligned}
\E \sup_{S\in \mathcal{A}} |\tPP(S)|
&\le \epsilon + Cn^{-0.5}\int_{\epsilon}^{\epsilon_0}\sqrt{t^{-1}\log \mathcal{N}_{[],1}(\mathcal{A},\epsilon, \PP)}dt \\
&\le \epsilon + C \cdot (n/C_N(d))^{-0.5}M^{\frac{d-1}{2}}\int_{\epsilon}^{\epsilon_0}t^{-(d+1)/4}dt\\
&\le \epsilon +  C \cdot (n/C_N(d))^{-0.5}M^{\frac{d-1}{2}}\epsilon^{-(d-3)/4},
\end{aligned}
\end{equation}	
	where the last inequality follows from the fact that $d \geq  4$. 
	We equalize the two terms of the right hand side of \eqref{eq:111}, and derive that $\epsilon \approx \left( \frac{ C_N(d)}{n}\right)^{\frac{2}{d+1}} M^{\frac{d-1}{d+1}}$.  
	Formally, $\epsilon$ is a number inside an interval that is uniquely defined by a unique integer, denoted by $r$, such that
	\[ 2^{-r-1} <  \left( \frac{C_N(d)}{n}\right)^{\frac{2}{d+1}} M^{\frac{d-1}{d+1}} \le 2^{-r}. \]
	Using this $\epsilon$, we obtain that the right hand side of \eqref{eq:111} is bounded by $C \left( \frac{C_N(d)}{n}\right)^{\frac{2}{d+1}} M^{\frac{d-1}{d+1}}$ as required. It remains to verify that we can apply Lemma~\ref{lem:final-chain} for this $\epsilon$, namely that $$\log \mathcal{N}_{[],1}(\mathcal{A},\epsilon, \PP) \leq C_N(d) \left( \frac{M}{\epsilon} \right)^{\frac{d-1}{2}}$$ Indeed, our $\epsilon$ is the smallest one (up to an absolute constant) that can be chosen that satisfies this inequality.
	
	Previously, we assumed that $R = \sup_{x \in A} \|x\| = 1$, and we will now remove this assumption. Assume that $R$ is arbitrary, Then rescale $x \mapsto x/R$.
	Formally, denote by the rescaled distribution $\PP'$ by $R$, namely that has the density $f'(y) = R^d f(Ry)$. Define the set $A' = A / R$, and note that
	\begin{equation} 
	\begin{aligned}
	\label{eq:22}
		\E \sup_{K \in \mathcal{K}_d} |\tPP(K \cap A)| 
		&= \E\sup_{K \in \mathcal{K}_d} |\tPP'(K/R \cap A/R)|
		= \E\sup_{K' \in \mathcal{K}_d} |\tPP'(K' \cap A')|\\
		&\le C \cdot \left( \frac{C_N(d)}{n}\right)^{\frac{2}{d+1}} (M')^{\frac{d-1}{d+1}},
	\end{aligned}
	\end{equation}
	where $M' = \sup_{x \in A'} f'(x)$ and the last inequality follows from applying this lemma on $\PP'$ and $A'$.
	Lastly, substituting $M' = R^d M$ in the right hand side of \eqref{eq:22}, the bound follows.
\end{proof}

\section{Proof of Theorem \ref{thm:hellinger}, Lower Bound}

Before we prove the theorem, we note that the proof builds upon the proof of Theorem 1 part (ii) in the paper by \cite{kim2016global}. Their proof relies on a packing argument of the unit sphere to generate a family of log-concave distributions. Any packing of the unit sphere can cover at most $e^{-cd}$ of its surface area, and this caused their proof to suffer from a similar factor in the risk.
Instead, we build an \emph{``approximate-packing''} which covers a constant fraction of the sphere. It allows us to create a large enough family of disjoint distributions to attain the desired bound. 

In order to simplify the proof, we first show how the following main lemma implies the theorem, and then we prove the lemma.
\begin{lemma}\label{Lem:Kur}
	Let $N \geq 10^{d}$ and $t_{d,N} \in (0.5,1)$, then there is a set of caps $C(x_1,t_{d,N}),\ldots, C(x_{cN},t_{d,N}) \subset B_d$ (see Definition \ref{def:cap}) that sasifies the following:
	\begin{enumerate}
	    \item $\vol(C(x_1,t_{d,N}))=\ldots = \vol(C(x_{cN},t_{d,N})) = c_{d,N}N^{-\left(1+\frac{2}{d-1}\right)}$.
	    \item 	For all $1 \leq i \leq cN$, the following holds:
	    \begin{equation}\label{Eq:Caps}
		  \vol\left( C(x_i,t_{d,N}) \setminus \bigcup_{j=1, i \ne j}^{cN}C(x_j,t_{d,N})\right) \geq c_3\vol(C(x_i,t_{d,N})) \geq c_4N^{-\left(1+\frac{2}{d-1}\right)}\vol(B_d).
\end{equation}
	\end{enumerate}
	where $c \in (0,1)$, $c_{d,N} \in (c_1,C_1)$.
\end{lemma}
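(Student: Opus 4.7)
The plan is to construct the caps by a random sampling argument on $\partial B_d$, replacing the strict packings of \cite{kim2016global} (which produce only $e^{-cd}N$ caps) with an ``approximate packing'' in which the caps may overlap but each retains a constant fraction of its volume as a private region.

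First I would calibrate the cap size by choosing $h := 1 - t_{d,N}$ so that each cap has volume exactly $c_{d,N} N^{-(1+2/(d-1))}\vol(B_d)$ for some $c_{d,N}\in(c_1,C_1)$. The standard asymptotic $\vol(C(x,h))\asymp(2h)^{(d+1)/2}\vol(B_{d-1})/(d+1)$ (obtained by integrating the cross-section area $\vol(B_{d-1})(2s-s^2)^{(d-1)/2}$) forces $2h\asymp N^{-2/(d-1)}$ up to absolute factors, and since $N\ge 10^d$ this yields $h\le 10^{-1}$, so $t_{d,N}\in(0.9,1)\subset(0.5,1)$ as required. Next I would sample $y_1,\dots,y_N$ i.i.d.\ uniformly on $\partial B_d$, set $A_i:=C(y_i,h)$, and define the private volume $V_i:=\vol(A_i\setminus\bigcup_{j\neq i}A_j)$. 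Conditioning on $y_i$, independence of the other samples gives
\[
\E[V_i\mid y_i]
=\int_{A_i}\bigl(1-p(\|z\|)\bigr)^{N-1}\,dz,\qquad
p(\rho):=\Pr_{y\sim\mathrm{Unif}(\partial B_d)}(y^\top z\ge 1-h),
\]
which depends only on $\rho=\|z\|$ by rotational symmetry, vanishes for $\rho<1-h$, and is monotone increasing on $[1-h,1]$.

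The crux of the argument is to show that $p(1)=\Theta(1/N)$ with an \emph{absolute} (not $d$-dependent) constant. Geometrically, $p(1)$ is the angular area of an $h$-cap on $\partial B_d$, which scales like $(2h)^{(d-1)/2}/\sqrt{2\pi d}$; the ratio of angular area to cap volume is $\Theta(1/h)$, and the relation $2h\asymp N^{-2/(d-1)}$ converts the volume scale $N^{-(1+2/(d-1))}$ into exactly $N^{-1}$. The dimensional factors $\vol(B_d)/\vol(B_{d-1})\asymp\sqrt{2\pi/d}$ and $(d+1)$ cancel symmetrically in this volume-vs.-angular-area comparison, leaving only absolute constants. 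Once $p(1)\le C/N$ is established, $(1-p(\|z\|))^{N-1}\ge e^{-C}$ uniformly on $A_i$, so $\E[V_i\mid y_i]\ge c_3'\vol(A_i)$ for an absolute $c_3'>0$.

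Summing over $i$, $\E[\sum_i V_i]\ge c_3'N\vol(A)$; since each $V_i\le\vol(A)$, a standard averaging argument yields a realization in which at least $cN:=\bigl(c_3'/(2-c_3')\bigr)N$ indices satisfy $V_i\ge(c_3'/2)\vol(A)$, and taking these as $(x_1,\dots,x_{cN})$ produces the desired caps with $c_3=c_3'/2$ and $c_4=c_3 c_1$. The main obstacle is the volume/angular-area computation verifying $p(1)=\Theta(1/N)$ with an absolute constant: any dimension-dependent constant here would be amplified to $e^{-\Theta(d)}$ through the factor $(1-p(1))^{N-1}$, losing the improvement over \cite{kim2016global}. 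The hypothesis $N\ge 10^d$ is exactly what is needed to absorb the residual $d$-dependent factors. In contrast, a strict pairwise-disjoint packing of $h$-caps yields only $\Theta(N/2^{d-1})$ points, so the approximate-packing relaxation is what recovers the crucial $2^{d-1}$ factor.
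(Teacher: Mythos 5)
Your proposal is correct, and it takes a genuinely different route from the paper. The paper's proof imports a deterministic construction from \cite{kur2017approximation}: an explicit choice of $t_{d,N}$ and $N$ points on $\partial B_d$ whose caps form an optimal polytope approximation to $B_d$, so that $\vol(\bigcup_i C(x_i,t_{d,N})) = c_{d,N}N^{-2/(d-1)}\vol(B_d)$. It then applies a clean combinatorial greedy lemma (Lemma~\ref{lem:cupsets}): given $N$ equal-volume sets whose union covers a $c$-fraction of $Nv$, repeatedly discard any set that has lost more than half its potential private volume, and at least $cN/2$ survivors remain. Your argument instead samples the cap centers i.i.d.\ uniformly on $\partial B_d$ and computes $\E[V_i\mid y_i]=\int_{A_i}(1-p(\|z\|))^{N-1}dz$ directly, then averages. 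Both proofs hinge on the same cancellation: that the ratio of the normalized spherical cap measure $p(1)$ to the normalized solid cap volume is $\Theta(1/h)$ up to \emph{absolute} constants, which, after the calibration $(2h)^{(d+1)/2}\asymp \frac{(d+1)\vol(B_d)}{\vol(B_{d-1})}N^{-(1+2/(d-1))}$, yields $p(1)\asymp 1/N$ --- the $\sqrt{d}$ and $(d+1)$ factors appear to powers $2/(d+1)$ and are therefore bounded uniformly in $d$. Your computation of this is correct. The random-sampling route is more self-contained (no dependence on the polytope-approximation literature) and makes the mechanism by which the $e^{-cd}$ factor of \cite{kim2016global} is avoided especially transparent: instead of requiring disjointness (which costs a factor $c^d$ in the count), one only requires $(1-p)^{N-1}\geq\text{const}$, which is a single $1/N$-scale bound. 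The trade-off is that you must do the angular-area vs.\ solid-volume bookkeeping yourself, whereas the paper outsources the delicate constants to a citation and reduces the rest to an elementary greedy argument. A couple of small points to tighten in a full write-up: state explicitly that $p(\rho)\le p(1)$ and vanishes for $\rho<1-h$, and verify that the absolute constant $C$ in $p(1)\le C/N$ satisfies $C<N$ (true since $N\ge 10^d\ge 100$ and $C$ is moderate).
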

Observe that the \eqref{Eq:Caps} implies that the overlap between the caps is not too large. Morever, these caps define a body 
	\[
	P_{d,N} :=  B_d \setminus \bigcup_{i=1}^{cN}C(x_i,t_{d,N})
	\]
	with volume
	\begin{equation}\label{Eq:volPd}
		\mathrm{vol}(P_{d,N}) = \left(1-C_{d,N}N^{-\frac{2}{d-1}}\right)\mathrm{vol}(B_d).
	\end{equation}
where $C_{d,N} \in (c_1,C_1)$.
The value of $N$ will be defined later, and we set $ K := cN$, where $c$ is the constant in Lemma~\ref{Lem:Kur}. Define $$\mathcal{F}_{\{0,1\}^{K}} :=  \left\{f_\alpha \in \mathcal{C}_d: \alpha \in \{0,1\}^{K} \right\},$$ where $f_{\alpha}$ is defined as 
\[
	f_{\alpha}(x) := \frac{\mathbbm{1}_{P_{d,N}}(x) + \mathbbm{1}_{\cup_{i=1}^{K}C(x_i,t_{d,N})\mathbbm{1}_{\alpha_i =1 }}(x)}{\vol(B_d)(1- C_{\alpha})},
\]
where $ \vol(B_d)(1- C_{\alpha}) = \vol(\bigcup_{i=1}^{K}C(x_i,t_{d,N})\mathbbm{1}_{\alpha_i =1 } \cup P_{d,N} )$ is the normalization factor.
Observe that the normalization factors may be different for different $\alpha$. However, by \eqref{Eq:volPd} we know that
\begin{equation}\label{key}
C_{\alpha} \leq C_{d,N}N^{-\frac{2}{d-1}} \leq CN^{-\frac{2}{d-1}}.
\end{equation}
Assume that $N \geq C^d$, where $C$ is defined in the previous equation. First, we analyze the term of the second requirement in Assouad's  Lemma (Lemma~\ref{Lem:Assoud}). Indeed, for when $ \|\alpha - \beta\|_0 =1 $, the distributions differ by a one cap, therefore  $|C_\alpha- C_\beta| \leq CN^{-(1+\frac{2}{d-1})}$. We denote by $\mathrm{supp}$ the support of a density and assume without loss of generality that $\alpha_1 = 1$ and $\beta_1 = 0$. Then
\begin{equation}\label{Eq:onecapp}
\begin{aligned}
h^{2}(f_{\alpha},f_{\beta}) & =\int_{B_d}\left(\sqrt{f_{\alpha}(x)}-\sqrt{f_{\beta}(x)}\right)^{2}dx 
\\& \leq \vol(B_d)^{-1}\int_{\text{supp}f_{\alpha} \cap \text{supp}f_{\beta}}((1-C_{\alpha})^{-0.5} -(1-C_{\beta})^{-0.5})^2dx \\& \quad \quad+   \int_{C(x_1,t_{d,N})}\left((1-CN^{-\frac{2}{d-1}})\vol(B_{d})\right)^{-1}dx
\\&  \leq C\vol(B_d)^{-1}\int_{B_d}|C_{\alpha}-C_{\beta}|^2dx +  C_1\vol(B_{d})^{-1}\vol(C(x,t_{n,N}))
\\& \leq CN^{-(2+\frac{4}{d-1})} +  C_1\vol(B_{d})^{-1}\vol(C(x,t_{n,N}))
\\& \leq CN^{-(1+\frac{2}{d-1})} \leq 0.5n^{-1},
\end{aligned}
\end{equation}
where we used \eqref{key} and \eqref{Eq:volPd}, and the fact that for $t$ small enough $(1-t)^{-0.5} = 1 + 0.5t + O(t^2)$.

 Now we prove the first requirement: let $\alpha, \beta \in \{0,1\}^K$, then
\begin{equation}\label{Eq:dimHellinger}
\begin{aligned}h^{2}(f_{\alpha},f_{\beta}) & \geq\int_{\text{supp}f_{\alpha}\triangle \text{supp}f_{\beta}}\left(\sqrt{f_{\alpha}(x)}-\sqrt{f_{\beta}(x)}\right)^{2}dx\\&
	\geq\vol(B_{d})^{-1}\vol (\text{supp}f_{\alpha} \triangle \text{supp}f_{\beta})
\\
&\geq \vol(B_{d})^{-1}\sum_{i=1}^{K}\mathbbm{1}_{\alpha_{i}\ne \beta_{i}}\vol\left(C(x_{i},t_{d,N}) \setminus \bigcup_{1\leq j \leq K,j \ne i}C(x_j,t_{d,N})\right)\\
& \geq c_{2}\|\alpha-\beta\|_{0}N^{-\left(1+\frac{2}{d-1}\right)},
\end{aligned}
\end{equation}
where we used \eqref{Eq:Caps}. 
Using the last two equations, we may set $\eta =c_3 N^{-\left(1+\frac{2}{d-1}\right)}$.
In order for $ CN^{-\left(1+\frac{2}{d-1}\right)} \leq 0.5n^{-1} $ to hold, we set $  K = cN=  c_1n^{\frac{d-1}{d+1}}$.
Then, we apply Assouad's  Lemma with $K,\eta$ and get  
\begin{equation*}
\inf_{\bar{f}}\Rcal_{h^2}(\bar{f},\Fcal_d) \geq c_4 \cdot K \cdot \eta \geq c_2 \cdot cn^{-1} \cdot c_2n^{\frac{d-1}{d+1}} = c_3n^{-\frac{2}{d+1}},
\end{equation*}  
and the claim follows. Finally, we prove Lemma \ref{Lem:Kur}.
\paragraph{Proof of Lemma \ref{Lem:Kur}}
	First we use a lemma proven by  \cite{kur2017approximation}. This lemma shows that there is a polytope with $ N \geq  10^d $ facets that gives an optimal approximation, up to an universal constant, to the Euclidean ball, in terms of the symmetric volume difference.
		\begin{lemma}[\cite{kur2017approximation}, Corollary of Lemma~3.1 (substituting $ \gamma = 1 $)]
			Let $N \geq 10^{d}$ and  $t_{d,N} := \sqrt{1-\left(\frac{\mathrm{vol}(\partial B_d)}{N\mathrm{vol}(B_{d-1})}\right)^{\frac{2}{d-1}}}$. There is a set of points $x_1,\ldots, x_{N} \in \partial B_d$ that defines a body 
			\[
			P_{d,N} :=  B_d \setminus \bigcup_{i=1}^{N}C(x_i,t_{d,N}).
			\]
			\noindent{that satisfies the following:}
			\begin{equation}
			\mathrm{vol}(P_{d,N}) = \left(1- c_{d,N}N^{-\frac{2}{d-1}}\right)\mathrm{vol}(B_d)
			\end{equation}
			\noindent{where $c_{d,N} \in (c_1,C_1) $.}
		\end{lemma}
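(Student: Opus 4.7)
The plan is to construct $x_1,\dots,x_N$ explicitly as the centers of a balanced partition of the sphere $\partial B_d$ and then bound $\vol(P_{d,N})$ from both sides. The parameter $t_{d,N}$ should be read as the threshold cutting out a spherical cap whose surface area on $\partial B_d$ is $\asymp \vol(\partial B_d)/N$, so that $N$ such caps are just enough to cover the sphere once.

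The first step is the geometric interpretation. Writing $\theta_{d,N}$ for the angular radius of the cap $C(x_i,t_{d,N})$, one has $\sin\theta_{d,N} = \bigl(\vol(\partial B_d)/(N\vol(B_{d-1}))\bigr)^{1/(d-1)}$. Combined with $\vol(\partial B_d)=d\vol(B_d)$ and $\vol(B_{d-1})/\vol(B_d)\asymp\sqrt{d}$ from the preliminaries, this gives $\sin\theta_{d,N}\asymp(\sqrt{d}/N)^{1/(d-1)}$, which under the hypothesis $N\geq 10^d$ is bounded by a small absolute constant. Accordingly, choose $x_1,\dots,x_N$ as centers of an equal-area Voronoi partition of $\partial B_d$ into $N$ cells (obtained by a standard greedy argument). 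This ensures that the spherical caps of angular radius $\theta_{d,N}$ around the $x_i$ cover $\partial B_d$ with overlap multiplicity bounded by an absolute constant.

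The upper bound $c_{d,N}\leq C_1$ is then immediate: by subadditivity and the cap-volume identity $\vol(C(x_i,t_{d,N}))=\vol(B_{d-1})\int_0^{\theta_{d,N}}\sin^d\phi\,d\phi\leq\vol(B_{d-1})\theta_{d,N}^{d+1}/(d+1)$, substituting the scaling of $\theta_{d,N}$ yields $\vol(B_d)-\vol(P_{d,N})\leq C_1 N^{-2/(d-1)}\vol(B_d)$ with $C_1$ absolute; the various powers of $d$ cancel cleanly after using $\vol(\partial B_d)=d\vol(B_d)$ and the ratio asymptotic.

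The lower bound $c_{d,N}\geq c_1$ is the genuine content. The idea is to use the radial representation $\vol(P_{d,N})=\tfrac{1}{d}\int_{\partial B_d}\rho(\hat y)^d\,dS(\hat y)$ with $\rho(\hat y):=\sup\{r\in[0,1]:r\hat y\in P_{d,N}\}$. The covering property forces $\max_i x_i^\top\hat y\geq t_{d,N}$ for every $\hat y$, so $\rho(\hat y)\leq t_{d,N}/\max_i(x_i^\top\hat y)_+<1$ on a positive-measure set near each cell boundary. Integrating the deficit $1-\rho(\hat y)^d$ over each Voronoi cell gives a contribution comparable to the per-cap volume up to an absolute factor, and summing over the $N$ cells yields $\vol(B_d)-\vol(P_{d,N})\geq c_1 N^{-2/(d-1)}\vol(B_d)$. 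The crucial point, and the main obstacle, is keeping $c_1$ absolute rather than exponentially small in $d$: a naive attempt to extract pairwise-disjoint caps loses a factor $2^{d-1}$ because only $\sim N\cdot 2^{-(d-1)}$ of the caps can be made disjoint, so the radial integration must exploit the equal-area balance of the partition directly rather than replacing the covering by a packing.
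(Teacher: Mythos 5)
The statement is imported by the paper from \cite{kur2017approximation} without proof, so the only thing to assess is whether your argument stands on its own. The upper bound $c_{d,N}\le C_1$ via subadditivity and the cap-volume integral is fine. The lower bound, however, has a genuine gap, and in fact rests on a premise that cannot hold. You assume that the $N$ spherical caps of angular radius $\theta_{d,N}$ about the $x_i$ cover $\partial B_d$ (``The covering property forces $\max_i x_i^\top\hat y\geq t_{d,N}$ for every $\hat y$''), with bounded overlap multiplicity. But $\theta_{d,N}$ is calibrated precisely so that the total surface area of the $N$ caps equals $(1+o(1))\vol(\partial B_d)$; a covering at density $1+o(1)$ would be an essentially perfect tiling of the sphere by caps, which is impossible for $d\ge 3$ (indeed, by the Coxeter--Few--Rogers bound the covering density of $\mathbb{R}^{d-1}$ by balls, hence locally of $S^{d-1}$ by small caps, grows linearly in $d$). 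So no choice of $N$ points makes your covering claim true, and the equal-area greedy partition in particular gives no control whatsoever on the shape or diameter of the cells, only on their areas.

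Even granting some form of coverage, the decisive step --- ``integrating the deficit over each Voronoi cell gives a contribution comparable to the per-cap volume up to an absolute factor'' --- is exactly the content of the lemma and is asserted rather than proved. The deficit $1-t_{d,N}/\max_i x_i^\top\hat y$ behaves like $(\theta_{d,N}^2-\phi^2)_+/2$ where $\phi$ is the angular distance to the nearest center, and since the spherical measure of a cap concentrates near its boundary, a cell whose mass sits at angular distance $\ge\theta_{d,N}$ from its center contributes essentially nothing. What is actually needed is a configuration for which the \emph{shrunken} caps of radius $\theta_{d,N}(1-c/d)$ still cover a constant fraction of $\partial B_d$ in measure (the shrinkage must be of order $1/d$ in the radius, or the retained area decays like $(1-c)^{(d-1)/2}$ and you lose an exponential factor again, just as in the packing approach you rightly reject). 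Such configurations do exist --- e.g., $N$ i.i.d.\ uniform centers cover each fixed direction by a shrunken cap with probability $1-e^{-\Omega(1)}$, so the expected covered fraction is an absolute constant --- but your proposal neither constructs one nor proves the quantitative coverage it needs, so the lower bound $c_{d,N}\ge c_1$ remains unestablished.
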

	Clearly, all the caps have the same volume. We estimate the volume of a cap $C(x,t_{d,N})$, by using the following standard result (see for example, Lemma 5.2 by \citet{kur2017approximation}): If $t_{d,N} > 0.5$, then
	\[
	\vol(C(x,t_{d,N})) = \vol(B_{d-1})\frac{\left(1-t_{d,N}^{2}\right)^{\frac{d+1}{2}}}{t_{d,N}\left(d-1\right)}+O\left(\frac{\vol(C(x,t_{d,N}))}{d}\right).
	\]
	Now observe that $1- t_{d,N} = \sqrt{1-\left(\frac{\vol(\partial B_d)}{\vol(B_{d-1})N}\right)^{\frac{2}{d-1}}} \approx 0.5 \cdot N^{-\frac{2}{d-1}}$, where $\approx $ denotes equality up to a ``small" absolute constant. Then,
	\begin{equation}\label{Eq:volofacap}
		\begin{aligned} \vol(C(x,t_{d,N})) &\approx \vol(B_{d-1})\frac{\left(1-t_{d,N}^{2}\right)^{\frac{d-1}{2}}(1-t_{d,N})}{d} \cdot \frac{1+t_{d,N}}{t_{d,N}}
		\\&	\approx 2\vol(B_{d-1})\frac{\left(1-t_{d,N}^{2}\right)^{\frac{d-1}{2}}}{d}\cdot (1-t_{d,N}) 
		= 2N^{-1}(1-t_{d,N})\frac{\vol(\partial B_d)}{d}
		\\& =2 N^{-1}(1-t_{d,N})\vol(B_d) \approx  N^{-(1+\frac{2}{d-1})}\vol(B_d).
		\end{aligned}
	\end{equation}
	where we used the definition of $ t_{d,N}$, and the last identity in Lemma \ref{Lem:EuclideanBallVolume}. 
	
Clearly, we showed that the volume of each cap is at most  $ \tilde{C}_{d,N}N^{-(1+\frac{2}{d-1})}\vol(B_d) $ where $ \tilde{C}_{d,N} \in (c_1,C_1) $. Finally, we prove the second part of the Lemma (i.e. \eqref{Eq:Caps}). For this purpose, we utilize the following Lemma:
	\begin{lemma} \label{lem:cupsets}
		Let $\mathcal{A}= \left\{S_1, \dots, S_N\right\}$ be a collection of measurable sets in $\mathbb{R}^d$, each of the same volume $v$. Additionally, assume that $\vol\left( \bigcup_{i=1}^N S_i \right) = c N v$, for some constant $0 < c < 1$. Then there exists a subset $\mathcal{A}' \subseteq \mathcal{A}$ of size at least $N c/2 $, such that the following holds:
		\[
		\forall S \in \mathcal{A}' \colon \vol\left( S \setminus \bigcup_{S' \in \mathcal{A}',\ S' \ne S} S' \right) \ge v c/2.
		\]
	\end{lemma}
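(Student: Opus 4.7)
The plan is to build $\mathcal{A}'$ by a greedy deletion procedure: starting from $\mathcal{A}_0 = \mathcal{A}$, at each step $t$ I check whether some set $S \in \mathcal{A}_t$ has ``small unique part,'' meaning
\[
\vol\bigl(S \setminus \bigcup_{S' \in \mathcal{A}_t,\ S' \ne S} S'\bigr) < vc/2,
\]
and, if so, remove it to obtain $\mathcal{A}_{t+1} = \mathcal{A}_t \setminus \{S\}$. The process stops exactly at the first index $T$ when no such $S$ exists in $\mathcal{A}_T$, at which point the remaining collection $\mathcal{A}' := \mathcal{A}_T$ automatically satisfies the desired lower bound on the unique part of each of its members. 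So the whole content of the lemma is the cardinality bound $|\mathcal{A}'| \ge Nc/2$.

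To obtain this, I would track the volume $U_t := \vol(\bigcup \mathcal{A}_t)$ and the size $n_t := |\mathcal{A}_t|$. The key observation is that removing a set with small unique part changes the union volume by exactly that unique part, so
\[
U_{t+1} > U_t - vc/2,
\]
and iterating gives $U_t > cNv - t \cdot vc/2$. On the other hand, the trivial bound $U_t \le n_t v = (N - t)v$ always holds. Combining these two inequalities yields
\[
(N - t)v \ge U_t > cNv - t \cdot vc/2,
\]
which rearranges to $t(1 - c/2) < N(1 - c)$, i.e.\ $t < N(1-c)/(1-c/2)$.

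Since $(1-c)/(1-c/2) < 1 - c/2$ (which is equivalent to $c^2/4 > 0$), the number of deletions satisfies $t < N(1 - c/2)$, hence the procedure necessarily halts with at least $Nc/2$ sets remaining. A short check then shows $|\mathcal{A}'| = N - T \ge Nc/2$, completing the proof. I do not anticipate a genuine obstacle here: the only thing to be careful about is making sure the arithmetic gives exactly the constant $Nc/2$ and not something weaker, which is why one needs the sharper bound $U_{t+1} > U_t - vc/2$ (strict inequality on the threshold) together with the elementary inequality $(1-c)/(1-c/2) \le 1 - c/2$.
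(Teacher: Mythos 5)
Your proof is correct and uses essentially the same greedy-deletion idea as the paper. The paper bounds the surviving union more directly—there can be at most $N$ removals, each costing at most $vc/2$, so the final union has volume at least $cNv - N\cdot vc/2 = Nvc/2$ and hence at least $Nc/2$ sets survive—whereas you combine $U_t > cNv - t\,vc/2$ with $U_t \le (N-t)v$, which in fact yields the marginally sharper count $|\mathcal{A}'| > Nc/(2-c) > Nc/2$; both are valid.
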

	\begin{proof}
		In order to return a set that satisfies Lemma \ref{Lem:Kur},  we perform the following greedy algorithm: We initialize a list $ \mathcal{L} \gets \{1,\ldots,N\}$: For each $ i \in \{1, \dots, N\}$, in an increasing order, we check if
		\[
		\vol\left(S_i \setminus \bigcup_{S' \in \mathcal{L},\ S' \ne S_i} S'\right) < v c/2.
		\]
		If the answer is positive, we remove $S_i$ from $\mathcal{L}$, and set $\mathcal{L}\gets \mathcal{L} \setminus \{S_i\}$. Otherwise, we do not change $\mathcal{L}$. Then, we continue with $i+1$.
			At the end of this procedure, we are guaranteed to have $\vol(\bigcup_{i \in \mathcal{L}}S_i) \ge N v c / 2$, since each removal decreases the volume of $\bigcup_{i \in \mathcal{L}}S_i$ by at most $vc/2$. Additionally, since element is of volume $v$, there are at least $cN/2$ remaining elements. The proof follows with $\mathcal{A}'=\mathcal{L}$.
	\end{proof}

	We apply Lemma~\ref{lem:cupsets} with $\mathcal{A} = \left\{ C(x_1, t_{d,N}), \dots, C(x_N, t_{d,N}) \right\}$, $v = \widetilde{C}_{d,N}N^{-\left( 1+\frac{2}{d-1} \right)} \vol(B_d)$, and $c = c_{d,N}$. The lemma concludes by returning the set of caps $\mathcal{A}'$.

\section{Proof of Theorem \ref{Thm:Entropy}}\label{sec:entropy_proof}
Thorough this section, we are going to use of the set $D_{f_0,\epsilon}$ that is defined in the following lemma: 
\begin{lemma}\label{Lem:support}
    For any $f_0 \in \Fcal_{d,\tilde{I}}$, there exists a subset $D_{f_0,\epsilon}$ that satisfies the following:
\begin{enumerate}
	\item  $D_{f_0,\epsilon}$ is a convex set that lies in $B_\epsilon := \left(C_1\max\{\log(\epsilon^{-1}),\sqrt{d}\}\right)B_d$.	
	\item  $3\log(\epsilon) \leq \log(f_0(x)) \leq C_1(d) \quad \forall x \in D_{f_0,\epsilon}$.
	\item  $\int_{D_{f_0,\epsilon}}f_0(x)dx \geq 1-\epsilon^2$.
\end{enumerate}
\end{lemma}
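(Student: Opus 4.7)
The plan is to take $D_{f_0,\epsilon}$ to be a truncated super-level set of $f_0$, namely
\[
D_{f_0,\epsilon} := \bigl\{ x \in B_\epsilon : f_0(x) \geq \epsilon^{3} \bigr\}.
\]
Convexity then follows immediately: $B_\epsilon$ is a Euclidean ball, $\{f_0 \geq \epsilon^{3}\}$ is a super-level set of a log-concave function (equivalently, a sub-level set of the convex function $-\log f_0$), and the intersection of two convex sets is convex. This yields Property~1.

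Property~2 is almost built into the definition. The lower bound $3\log\epsilon \leq \log f_0(x)$ is enforced by the super-level constraint. The upper bound $\log f_0(x) \leq C_1(d)$ is the standard fact that an almost-isotropic log-concave density is uniformly bounded by $C^{d}$ for some absolute $C>0$, a direct consequence of log-concavity together with the bound $I/2 \prec \Sigma \prec 2I$ on the covariance. Taking logarithms yields the claimed constant $C_1(d) = d\log C$.

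Property~3 is the heart of the argument. I would split
\[
1 - \int_{D_{f_0,\epsilon}} f_0(x)\,dx \;=\; \int_{B_\epsilon^{c}} f_0(x)\,dx \;+\; \int_{B_\epsilon \cap \{f_0 < \epsilon^{3}\}} f_0(x)\,dx,
\]
and show that each piece is at most $\epsilon^{2}/2$. For the first piece, after a bounded affine reduction to the isotropic case (permissible since $\Fcal_{d,\tilde{I}}$ is closed under such reductions up to constants), I would invoke Lemma~\ref{lem:density-decay} to obtain exponential decay of $f_0$ outside a ball of radius $\Omega(\sqrt{d})$; choosing the constant $C_1$ in $B_\epsilon$ large enough as a function of the dimension makes the tail mass at most $\epsilon^{2}/2$, which is precisely what the hybrid radius $C_1\max\{\log(\epsilon^{-1}),\sqrt{d}\}$ is designed to achieve (the $\sqrt{d}$ term handles the typical scale of an isotropic log-concave vector, while the $\log(\epsilon^{-1})$ term kills the exponential tail). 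The second piece is controlled crudely by
\[
\int_{B_\epsilon \cap \{f_0 < \epsilon^{3}\}} f_0(x)\,dx \;\leq\; \epsilon^{3} \cdot \vol(B_\epsilon) \;\leq\; \epsilon^{3} \cdot \bigl(C_1\max\{\log(\epsilon^{-1}),\sqrt{d}\}\bigr)^{d} \vol(B_d),
\]
and this is at most $\epsilon^{2}/2$ once $\epsilon$ is below a dimension-dependent threshold.

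The main technical obstacle is arranging the constants so that both pieces of the decomposition are controlled simultaneously: the radius $R$ of $B_\epsilon$ must be large enough to capture the bulk of the mass, but not so large that $\vol(B_\epsilon)\cdot\epsilon^{3}$ becomes non-negligible. The interplay is benign because the tail contribution decays exponentially in $R$ while the volume grows only polynomially, so a single choice $R = C_1(d) \max\{\log(\epsilon^{-1}), \sqrt{d}\}$ works uniformly in the regime $\epsilon \leq \epsilon_0(d)$; for larger $\epsilon$ the claim is trivial because one may simply take $D_{f_0,\epsilon}$ to be a small convex neighborhood of a mode of $f_0$ and absorb the loss into the dimensional constant.
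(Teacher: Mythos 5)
Your proposal is correct and follows essentially the same route as the paper: the set $D_{f_0,\epsilon}$ is exactly the paper's truncated super-level set, convexity and Property~2 are handled identically, and Property~3 uses the same two-term decomposition with the same crude volume bound for the low-density piece inside $B_\epsilon$. The only difference is the source of the exponential tail bound on $\int_{B_\epsilon^c} f_0$: the paper cites a concentration-of-norm inequality for log-concave measures (a corollary of \cite{lee2018stochastic}), whereas you integrate the pointwise density-decay estimate of Lemma~\ref{lem:density-decay}; both yield the required exponentially small tail mass and are interchangeable here.
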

\begin{proof}
First, we use a  corollary of \cite{lee2018stochastic} that implies  that for any $t > 0$ the following holds:
\[
	\Pr_{X \sim f_0 \in \Fcal_{d,\tilde{I}}}(|\|X\|_2 - \sqrt{d}| \geq C\sqrt{d}t) \leq e^{-c\sqrt{d}t}.
\]   
Thus, we know that $1-\epsilon^3$ of the mass of these distributions lies in $B_\epsilon:= C_1\max\{\log(\epsilon^{-1}),\sqrt{d}\}\cdot B_d$. Clearly, it is enough to approximate $f_0$ on this ball.  Also, by Lemma \ref{lem:density-decay}, we know that 
$
	\|f_0\|_{\infty} \leq C(d).
$

Using the fact that the super-level sets are convex, we may restrict ourselves to a convex subset of the ball $B_{\epsilon}$, denoted by $D_{f_0,\epsilon}$, such that $f_0(x) \geq \epsilon^{3}$. Clearly, when $\epsilon$ is small enough the following holds:
\[
    \PP_{f_0}(D_{f_0,\epsilon}) \geq 1-\epsilon^3- \vol(B_{\epsilon}) \cdot \epsilon^{3} \geq 1-\epsilon^{3}- C^d\epsilon^{3}\log(\epsilon^{-1})^d \geq 1-\epsilon^2.
\]
where, in the last inequality we assume that $\epsilon$ is small enough. The claim follows.
\end{proof}
\subsection{Covering with respect to total variation, an upper bound}
Define $$K(f_0) := \{ (x,y): 3\log(\epsilon) \leq y \leq f_0(x), x \in D_{f_0,\epsilon}\},$$ 
where $D_{f_0,\epsilon}$ is defined in Lemma \ref{Lem:support}. Observe that it is convex in $\R^{d+1}$ and lies inside the ball $C(d)\log(\epsilon^{-1})B_{d+1}$. By Lemma \ref{lem:bro}, we have $2^{
	O_d((\epsilon/ \log(\epsilon^{-1})^{d+1})^{-d/2})}$ convex sets, such that for every $f_0$ and its $K(f_0)$, there exists a $\tilde{K}(f_0)$, that satisfies the following properties:
\[
	\vol(\tilde{K}(f_0) \triangle K(f_0)) \leq \bar{c}(d)\epsilon \text{ and } d_{H}(\tilde{K}(f_0),K(f_0)) \leq \bar{c}(d)\epsilon \text{ and } K(f_0) \subset \tilde{K}(f_0).
\]
where $\bar{c}(d)$ is a constant that will be defined later.

Since $K(f_0) \subset \tilde{K}(f_0)$, we can construct a concave function supported on $D_{f_0,\epsilon}$, denoted by $\log(\tilde{f}_0)$, such that
\[
	\int_{D_{f_0,\epsilon}}|\log(\tilde{f}_0) - \log({f}_0)| \leq \epsilon \text{ and }   4\log(\epsilon) \leq \log(\tilde{f}_0) \leq 2C(d).
\]
In order to see this, the first property follows because symmetric volume difference is equivalent to the area between the graphs of the functions. The second property follows from the small Hausdorff distance. Now, we may assume without loss of generality that $\tilde{f}_0$ is a log-concave density on $D_{f_0,\epsilon}$.  To see this, by definition 
\begin{align*}
    \int_{D_{f_0,\epsilon}}\tilde{f_0}dx &\leq \int_{D_{f_0,\epsilon}}f_0dx + \int_{D_{f_0,\epsilon}}|f_0 - \tilde{f_0}|dx \\&\leq  1 - \epsilon^2 + C(d)\int_{D_{f_0,\epsilon}}|\log(f_0) - \log(\tilde{f_0})|dx \leq 1+ C_1(d)\bar{c}(d)\epsilon,
\end{align*}
where we used Lemma \ref{Lem:support} and the fact that  if $e^s,e^t \leq C(d)$ then $|e^s -e^{t}| \leq C(d)|s-t|$. Hence, we can re-scale $\tilde{f}_0$ to be a density, and it causes a perturbation of $C_1(d)\bar{c}(d)\epsilon$. Therefore, when $\bar{c}(d)$ is small enough, the perturbation is less than $\epsilon/3$, and does not affect the result.

Finally, by using similar considerations as in the previous equation, we conclude that
\begin{align*}
    d_{TV}(f_0,\tilde{f_0}) &=  0.5\int_{\R^d}|f_0 - \tilde{f_0}|dx \leq 0.5\int_{D_{f_0,\epsilon}}|f_0 - \tilde{f_0}|dx + \epsilon^2 \\&\leq  C(d)\int_{D_{f_0,\epsilon}}| \log(f_0) - \log(\tilde{f_0})|dx + \epsilon^2 \leq 2C(d)\bar{c}(d)\epsilon \leq \epsilon,
\end{align*}
where we assume that $\bar{c}(d)$ is small enough.

We showed that for every $f_0 \in \mathcal{F}_{d,\tilde{I}}$ there is an $\epsilon$-approximation $\tilde{f}_0$ generated directly from the $\epsilon$-net of the convex sets that lie inside $C(d)\log(\epsilon^{-1})B_{d+1}$. Thus,  $\epsilon$-entropy numbers of the set of almost isotropic log-concave distributions, with respect to the total variation distance,  are bounded by $(\epsilon/ \log(\epsilon^{-1})^{d+1})^{-d/2}$.
\subsection{Covering with respect to Hellinger distance, an upper bound}
In view of Lemma \ref{lem:bro}, there are $2^{{O}_d((\epsilon/\log(\epsilon^{-1})^{d/2})^{-(d-1)}))}$ convex sets such that for every $f_0$ there exists an $\epsilon^2$-approximation to  $D_{f_0,\epsilon}$, denoted by  $\tilde{D}_{f_0,\epsilon}$, that satisfies the following properties:
\[
\vol(\tilde{D}_{f_0,\epsilon} \triangle D_{f_0,\epsilon}) \leq \bar{c}(d)\epsilon^2  \text{ and } \tilde{D}_{f_0,\epsilon} \subset D_{f_0,\epsilon},
% \text{ and } d_{H}(\tilde{D}_{f_0,\epsilon},D_{f_0,\epsilon}) \leq \bar{c}(d)\epsilon^2
\]
where $ D_{f_0,\epsilon} $ is defined in Lemma \ref{Lem:support}, and $\bar{c}(d)$ is small constant that will be defined later.

Now, we want to find an $\epsilon$-covering for all the concave functions bounded by $|\log(\epsilon)|$ that are supported on $\tilde{D}_{f_0,\epsilon} \subset B_{\epsilon}$. For this purpose, we use the main result \cite[Theorem 1.5]{gao2017entropy}, which states that there exits an $\epsilon$-net with bracketing, with respect to the $L_2(\mathrm{Unif}(\tilde{D}_{f_0,\epsilon}))$ metric, with cardinality bounded by $2^{O_d((\epsilon/\log(\epsilon^{-1})^{(d+2)/2})^{-(d-1)})}$. Namely, for every $f_0$, there exists a function in this net, denoted by $\log(\tilde{f_0})$, such that
\[
	\sqrt{\int_{\tilde{D}_{f_0,\epsilon}}(\log(f_0)- \log(\tilde{f_0}))^2dx}:=\| \log(f_0)- \log(\tilde{f_0}) \|_{2,\tilde{D}_{f_0,\epsilon}} \leq \bar{c}(d)\epsilon ~\text{ and }   \log(f_0) \leq \log(\tilde{f_0}) \leq 2C(d).\footnote{The upper bound on $\log(\tilde{f_0})$ follows from their construction, or it can be proven by using Lemma \ref{lem:bro}.}
\]
 Using a similar argument as in the previous sub-section, we may also assume that $\tilde{f_0}$ is a density. Thus, we conclude that
\begin{align*}
    	h(f_0,\tilde{f_0}) &\leq \sqrt{\int_{D_{f_0,\epsilon}}(\sqrt{f_0}-\sqrt{\tilde{f_0}})^2dx + \int_{\R^d \setminus D_{f_0,\epsilon}}f_0(x) dx} \\&= \sqrt{\int_{D_{f_0,\epsilon}}(e^{0.5\log(f_0(x))}-e^{0.5\log(\tilde{f_0})})^2dx} + \epsilon
    	\\&\leq \epsilon + C(d)\|
      \log(f_0)- \log(\tilde{f_0}) \|_{2,D_{f_0,\epsilon}}, 
\end{align*}
where we used Lemma \ref{Lem:support}, and the fact that if $ e^s, e^t \leq C(d)$ then $|e^s-e^t| \leq C_3(d)|s-t|$. Finally,
 we use the definition of $\log(\tilde{f_0})$ and bound the last term  
\begin{align*}
    	\| \log(f_0)- \log(\tilde{f_0}) \|_{2,D_{f_0,\epsilon}} &\leq C(d)\sqrt{\vol(D_{f_0,\epsilon} \triangle \tilde{D}_{f_0,\epsilon})} +  \| \log(f_0)- \log(\tilde{f_0}) \|_{2,\tilde{D}_{f_0,\epsilon}} \\&\leq \bar{c}(d)C(d)\epsilon 
\end{align*}
where we used the fact that both these functions are bounded by $C(d)$, and $\bar{c}(d)$ is chosen to be small enough. Using the last two equations, we conclude that $	h(f_0,\tilde{f_0}) \leq 2\epsilon$.

Observe that we generated a family of functions that gives an $2\epsilon$-covering to the almost isotropic log-concave distributions with respect to the Hellinger distance. Our family was generated by choosing $2^{O_d((\epsilon/\log(\epsilon^{-1})^{d/2})^{-(d-1)})}$ convex sets and for each set we choose $2^{{O}_d((\epsilon/\log(\epsilon^{-1})^{(d+2)/2})^{-(d-1)})}$ bounded concave functions. Thus, the $\epsilon$-entropy numbers are bounded by ${O}_d((\epsilon/\log(\epsilon^{-1})^{(d+2)/2})^{-(d-1)})$, and the claim follows. 

\subsection{Lower bounds for the $\epsilon$-entropy numbers}
The lower bound with respect to the total variation is proved in the second part of Theorem \ref{thm:TV} (see Remark \ref{rem:lowerbound}). The lower bound with respect to the Hellinger distance was proved in \cite{kim2016global}; in the second part of Theorem \ref{thm:hellinger}, we improved the dimensional constant from $c^d$ to $c_1$.

\section{Proof of Theorem \ref{thm:TV}, Lower Bound}
\label{sec:TV_lower}

We start with an overview of the proof. The main idea  is to create a family of log-concave distributions 
$$\mathcal{F}_{\{0,1\}^K} :=\{f_{\alpha}: \alpha  \in \{0,1\}^K\},$$ 
where $ K= \Theta_d(n^{\frac{d}{d+4}})$ and its members are perturbations of the Gaussian distribution, and its size is, by definition,  $2^{\Theta_d(n^{\frac{d}{d+4}})}$. The perturbations will be negligible under the Hellinger squared, but not in the total variation metric. Once we construct the distributions, it will follow that the Hellinger squared distance between a couple of distributions $ f_{\alpha},f_{\beta} $ where $\alpha,\beta \in \{0,1\}^K$, is $$O_d\left(\|\alpha-\beta\|_0 \cdot n^{-1}\right),$$
while in total variation it  is
\begin{equation}\label{Eq:IntroVar}
O_d\left(\|\alpha-\beta\|_0 n^{\frac{2}{d+4}} \cdot n^{-1}\right).
\end{equation}
Assouad's  Lemma (Lemma \ref{Lem:Assoud}) implies that the risk with respect to the squared Hellinger is at least $ O_d(n^{-\frac{4}{d+4}})$. By \eqref{Eq:IntroVar}, switching from Hellinger to total variation, we get an extra factor of $ n^{\frac{2}{d+4}}$. Thus,
\[
	\inf_{\bar{f}}\Rcal_{\TV}(\bar{f},\Fcal_d) \geq O_d(n^{-\frac{2}{d+4}}).
\]  
\subsection{Proof}
Denote by  $\gamma(x)$ the Gaussian density of $\R^{d}$, also we may assume that $\delta \in (0,e^{-Cd})$  and define $g_{x_0,\delta}(x)$ as follows, for all $x_0 \in \mathbb{R}^d$:
\[
	g_{x_0,\delta}(x) = \begin{cases} 
		\frac{1}{4}(\delta - \|x-x_0\|_2)^2 & \|x-x_0\|_2 \leq \delta  \\
		0 & \|x-x_0\|_2 > \delta.
	\end{cases}
\]
Using $g_{x,\delta}$, we can construct log-concave functions:
\begin{lemma}\label{Lem:concavefunction}
	Let $x_1,\ldots, x_l \in \R^{d}$, such that $\|x_i - x_j\| > 2\delta$ for all $i \ne j$. Then the function
	\begin{equation}{\label{Eq:Logcon}}
	f(x) = \prod_{i=1}^{l}e^{g_{x_i,\delta}}(x)\gamma(x)
	= \begin{cases} 
	e^{g_{x_i,\delta}}(x)\gamma(x) & \exists i \text{ such that } \|x-x_i\|_2 < \delta  \\
	\gamma(x) & \text{otherwise}
	\end{cases}
	\end{equation}
	is log-concave.
\end{lemma}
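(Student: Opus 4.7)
The plan is to verify log-concavity pointwise: it suffices to show that $\log f$ is a concave function on $\mathbb{R}^d$. My first observation is that the separation hypothesis $\|x_i-x_j\|_2 > 2\delta$ forces the supports of the bumps $g_{x_i,\delta}$, namely the closed balls $B(x_i,\delta)$, to be pairwise disjoint. Hence at every $x \in \mathbb{R}^d$ at most one of the functions $g_{x_i,\delta}$ contributes, and we may write
\[
\log f(x) = -\tfrac{1}{2}\|x\|_2^2 - \tfrac{d}{2}\log(2\pi) + \sum_{i=1}^l g_{x_i,\delta}(x),
\]
where the sum is really a disjoint union of local perturbations of the quadratic $-\|x\|_2^2/2$. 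So it is enough to prove that for any single $x_0$, the function $\varphi(x) := -\tfrac{1}{2}\|x\|_2^2 + g_{x_0,\delta}(x)$ is concave on $\mathbb{R}^d$; the global claim will then follow by concatenating such arguments on the disjoint balls.

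Next, I would show that $\varphi$ is $C^1$ across the boundary $\partial B(x_0,\delta)$. Writing $u = x-x_0$ and $r = \|u\|_2$, inside the ball we have $g_{x_0,\delta}(x) = \tfrac{1}{4}(\delta-r)^2$ and
\[
\nabla g_{x_0,\delta}(x) = \tfrac{r-\delta}{2r}\, u,
\]
which tends to $0$ as $r \to \delta^{-}$. Outside the ball $\nabla g_{x_0,\delta} \equiv 0$ as well, so $\nabla \varphi$ is continuous on all of $\mathbb{R}^d$. By the standard characterization of concavity for $C^1$ functions, it now suffices to prove that $\nabla^2 \varphi \preceq 0$ on each of the two open regions $B(x_0,\delta)$ and $\mathbb{R}^d \setminus \overline{B(x_0,\delta)}$.

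Outside the ball $\varphi(x) = -\|x\|_2^2/2$, with Hessian $-I \prec 0$. Inside the ball, using the formula for the Hessian of a radial function $\phi(r)$, namely $\phi''(r)\,\hat{u}\hat{u}^\top + \frac{\phi'(r)}{r}(I-\hat{u}\hat{u}^\top)$, applied to $\phi(r) = \tfrac{1}{4}(\delta-r)^2$ (which gives $\phi'(r) = (r-\delta)/2$ and $\phi''(r) = 1/2$), the Hessian of $\varphi$ inside the ball is
\[
\nabla^2 \varphi(x) = -I + \tfrac{1}{2}\,\hat{u}\hat{u}^\top + \tfrac{r-\delta}{2r}\bigl(I-\hat{u}\hat{u}^\top\bigr).
\]
The eigenvalue in the radial direction $\hat u$ is $-1 + 1/2 = -1/2$, and the eigenvalues in the tangential directions are $-1 + (r-\delta)/(2r) = -1/2 - \delta/(2r)$, both strictly negative for $0 < r < \delta$. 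Thus $\nabla^2 \varphi \prec 0$ on the interior, and together with the $C^1$ continuity this gives concavity of $\varphi$, hence of $\log f$, hence log-concavity of $f$.

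The only delicate point is the gluing across the sphere $\partial B(x_0,\delta)$, which is why I emphasize the $C^1$ check; beyond that, the argument is a direct Hessian computation, and the ball-disjointness hypothesis guarantees that the local perturbations do not interfere with one another.
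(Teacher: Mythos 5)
Your approach is essentially the Hessian computation that the paper mentions only as ``intuition'' after the lemma statement (the paper's actual proof reduces to affine lines and argues case-by-case); fleshing that intuition out is a legitimate route, and your $C^1$ check across the sphere $\partial B(x_0,\delta)$ and the Hessian computation for $0<r<\delta$ are both correct. However, there is a genuine gap at the center $x_0$ of each bump.

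Near $x_0$, write $r=\|x-x_0\|$ and expand
\[
g_{x_0,\delta}(x)=\tfrac14(\delta-r)^2=\tfrac{\delta^2}{4}-\tfrac{\delta}{2}\,r+\tfrac{1}{4}r^2 .
\]
The middle term $-\tfrac{\delta}{2}\|x-x_0\|$ has a conical kink at $x_0$: the gradient $\nabla g_{x_0,\delta}(x)=\tfrac{r-\delta}{2}\hat u$ tends to $-\tfrac{\delta}{2}\hat u$ as $r\to0$, a limit that depends on the direction of approach, so $\nabla\varphi$ does \emph{not} exist at $x_0$. Your claim that ``$\nabla\varphi$ is continuous on all of $\mathbb{R}^d$'' is therefore false, and the subsequent appeal to ``the standard characterization of concavity for $C^1$ functions'' does not apply at the centers $x_1,\dots,x_l$. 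The blow-up you compute in the tangential eigenvalue, $-\tfrac12-\tfrac{\delta}{2r}\to-\infty$ as $r\to0^+$, is exactly the symptom of this singularity: it is not a removable artifact but a real non-smoothness. The conclusion of the lemma is still true, because the kink bends the \emph{right} way---along any line through $x_0$ the one-sided derivatives of $\varphi$ satisfy $\varphi'(0^-)-\varphi'(0^+)=\delta>0$, which is consistent with concavity. The cleanest fix is to observe from the displayed expansion that inside the ball
\[
\varphi(x)=\underbrace{\Bigl(-\tfrac12\|x\|^2+\tfrac14\|x-x_0\|^2+\tfrac{\delta^2}{4}\Bigr)}_{\text{concave quadratic, Hessian }-\frac12 I}\;+\;\underbrace{\Bigl(-\tfrac{\delta}{2}\|x-x_0\|\Bigr)}_{\text{concave}},
\]
so $\varphi$ is concave on all of $B(x_0,\delta)$ including the center, with no differentiability needed; your $C^1$ matching across $\partial B(x_0,\delta)$ then glues this to $-\tfrac12\|x\|^2$ on the outside. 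Without some such handling of the centers, the proof as written does not close.
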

\noindent The proof of this lemma is deferred to Section~\ref{subsec:pf-lem-conc} in the Supplementary. For an intuition, note that $\log \gamma(x) + g_{x,\delta}$ is concave due to the fact that the Hessian of $\log\gamma(x)$ equals $-I$, while the largest singular value in the Hessian of $g_{x,\delta}$ is bounded by $1/2$; hence, the sum of Hessians is negative semi-definite.

For any $\|x-x_0\|_2 < \frac{\delta}{2}$, the following holds:
\begin{equation}\label{Eq:Bounds}
e^{g_{x_0,\delta}}(x) \in \left(1 + \frac{1}{16}\delta^2,1+\frac{1}{4}\delta^2\right),
\end{equation}
where we used $ 1+t \leq e^{t} \leq 1+2t,$ when $ t $ is small enough.
We use the following lemma, which is proved in  in the Supplementary.
\begin{lemma}\label{Lem:Ballsinbin}
	Let $ \delta \in (0,e^{-Cd})$, and define $ K := c^d\frac{\vol(B_d(\sqrt{2d}))}{\vol(B_d(\delta))}.$ Then, there is a set of disjoint antipodal balls $ B_d(x_1,\delta),\ldots,B_d(x_K,\delta),B_d(-x_1,\delta),\ldots,B_d(-x_K,\delta) $ that are inside $  B_d(\sqrt{2d}) $.    
\end{lemma}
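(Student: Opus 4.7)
The strategy is a standard volume/packing argument adapted to enforce the antipodal structure. First, I would restrict attention to the open ``cap'' region
\[
H_\delta := \{x \in \mathbb{R}^d : \|x\|_2 \le \sqrt{2d} - \delta, \; x \cdot e_1 > \delta\}.
\]
Any point $x \in H_\delta$ satisfies $B_d(x,\delta) \subset B_d(\sqrt{2d}) \cap \{x_1 > 0\}$ and, by central symmetry, $B_d(-x,\delta) \subset B_d(\sqrt{2d}) \cap \{x_1 < 0\}$. In particular, for any $x,y \in H_\delta$ the balls $B_d(x,\delta)$ and $B_d(-y,\delta)$ are automatically disjoint since they lie in opposite open half-spaces. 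Thus it suffices to exhibit $K$ points $x_1,\ldots,x_K \in H_\delta$ with pairwise distance greater than $2\delta$; then the $2K$ balls $B_d(\pm x_i, \delta)$ are pairwise disjoint and contained in $B_d(\sqrt{2d})$.

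Next, I would produce such a configuration by taking any maximal $2\delta$-separated subset of $H_\delta$. Maximality forces the balls $B_d(x_i, 2\delta)$ to cover $H_\delta$, so
\[
K \ge \frac{\vol(H_\delta)}{\vol(B_d(2\delta))} = \frac{\vol(H_\delta)}{2^d\,\vol(B_d(\delta))}.
\]
It then remains to show $\vol(H_\delta) \ge c_0 \vol(B_d(\sqrt{2d}))$ for an absolute constant $c_0 > 0$, since combining this with the line above gives $K \ge (c_0/2^d)\,\vol(B_d(\sqrt{2d}))/\vol(B_d(\delta))$, and absorbing $c_0/2^d$ into $c^d$ for a slightly smaller $c<1/2$ yields the claim.

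The volume estimate follows from two observations. The half-ball $B_d(\sqrt{2d}-\delta) \cap \{x_1 > 0\}$ has volume exactly $\tfrac{1}{2}\vol(B_d(\sqrt{2d}-\delta))$, which equals $(1 - \delta/\sqrt{2d})^d \cdot \tfrac{1}{2}\vol(B_d(\sqrt{2d}))$; for $\delta \le e^{-Cd}$ (much smaller than $\sqrt{2d}/d$) this is at least $\tfrac{1}{4}\vol(B_d(\sqrt{2d}))$. Removing the thin slab $\{0 < x_1 \le \delta\}$ costs at most $\delta \cdot \vol(B_{d-1}(\sqrt{2d}))$, and by Lemma~\ref{Lem:EuclideanBallVolume} the ratio $\vol(B_{d-1}(\sqrt{2d}))/\vol(B_d(\sqrt{2d})) = \Theta(\sqrt{d}/\sqrt{2d}) = \Theta(1)$, so the removed volume is $O(\delta)\cdot \vol(B_d(\sqrt{2d}))$, again negligible under $\delta \le e^{-Cd}$. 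Combining gives $\vol(H_\delta) \ge c_0 \vol(B_d(\sqrt{2d}))$.

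\textbf{Main obstacle.} There is no substantive obstacle: the argument is a classical packing bound with a half-space trick. The only mildly delicate point is verifying that the ``boundary'' losses (the shell $\sqrt{2d}-\delta \le \|x\| \le \sqrt{2d}$ and the slab $|x_1|\le \delta$) are lower-order compared to $\vol(B_d(\sqrt{2d}))$; the hypothesis $\delta \in (0, e^{-Cd})$ is far stronger than needed for this, so a sufficiently small absolute constant $c$ can be extracted without the bound on $\delta$ being tight.
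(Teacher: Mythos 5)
Your proof is correct, and it takes a slightly cleaner route than the paper's. The paper first packs all of $B_d(\sqrt{2d})$ with disjoint $\delta$-balls and then argues by pigeonhole that one of two antipodal caps (whose union covers almost all of the volume when $\delta < e^{-Cd}$) must contain roughly half of those balls; it then returns that cap's balls together with their antipodal reflections. Your approach instead carves out the region $H_\delta$ \emph{before} packing, so that every ball centered in $H_\delta$ automatically sits in the open half-space $\{x_1>0\}$ and inside $B_d(\sqrt{2d})$, making disjointness of the antipodal family immediate without any pigeonhole step. This sidesteps a point the paper's proof glosses over: when a ball from the global packing straddles the slab between the two caps, it is not contained in either cap, and the paper does not explicitly account for this (the hypothesis $\delta<e^{-Cd}$ makes the slab so thin that few balls can meet it, but your construction avoids the issue entirely). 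Both arguments yield the same $c^d$-type constant.

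One small arithmetic slip, which does not affect the conclusion: you write $\vol(B_{d-1}(\sqrt{2d}))/\vol(B_d(\sqrt{2d})) = \Theta(\sqrt{d}/\sqrt{2d}) = \Theta(1)$, but the correct order is $\Theta(1/d)$. Indeed, $\vol(B_{d-1}(\sqrt{2d}))/\vol(B_d(\sqrt{2d})) = \vol(B_{d-1})/(\sqrt{2d}\,\vol(B_d))$, and by Lemma~\ref{Lem:EuclideanBallVolume} we have $\vol(B_{d-1})/\vol(B_d) \asymp 1/\sqrt{d}$, giving $\Theta(1/(\sqrt{d}\cdot\sqrt{2d})) = \Theta(1/d)$; you appear to have used the reciprocal of the ratio from that lemma. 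Since $\Theta(1/d) \le \Theta(1)$, the removed slab volume is still $O(\delta)\cdot\vol(B_d(\sqrt{2d}))$ (in fact smaller), and the remainder of your volume estimate and the final bound on $K$ stand as written.
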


 We create a finite set of  log-concave distributions $ \mathcal{F}_{\{0,1\}^{K}} := \{ f_\alpha \colon \alpha \in \{0,1\}^K, f_{\alpha} \in \mathcal{F}_d \}$, where 
\begin{equation}\label{Eq:densityForalpha}
	f_{\alpha}(x) := C^{-1}_{d,\delta}\gamma(x)c\cdot\exp\left(\sum_{i=1}^{K} \alpha_i g_{x_i,\delta}(x) + \sum_{i=1}^{K} (1-\alpha_{i})g_{-x_i,\delta}(x)\right),
\end{equation}
where $C_{d,\delta}$ is the normalization factor.
Due to the Gaussian symmetry, i.e. $ \gamma(x) = \gamma(-x) $ for all $ x\in \R^d,$ and from the definition of $ f_{\alpha} $, the normalization constants are identical for all $\alpha \in \{0,1\}^{K}$. Most importantly, the normalization factor is close to $1$, as stated below:
\begin{lemma}
It holds that $C_{d,\delta} \in (1,1+e^{-Cd}\delta^2).$ 
\end{lemma}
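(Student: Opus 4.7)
The plan is to expand $C_{d,\delta}$ as an integral and localize the perturbation to the disjoint balls $B_d(\pm x_i,\delta)$, then bound each piece using the tiny range of $g_{\pm x_i,\delta}$ together with the cardinality bound provided by Lemma~\ref{Lem:Ballsinbin}.

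First I would use the fact that the balls $B_d(x_i,\delta), B_d(-x_i,\delta)$ for $i=1,\ldots,K$ are pairwise disjoint. Consequently, for any fixed $\alpha\in\{0,1\}^K$ the exponent $\sum_i \alpha_i g_{x_i,\delta}(x)+\sum_i(1-\alpha_i)g_{-x_i,\delta}(x)$ is supported on $2K$ disjoint balls, and on each such ball $B(y,\delta)$ with $y\in\{\pm x_i\}$ the exponent equals $g_{y,\delta}(x)$. Hence
\[
C_{d,\delta}-1 = \sum_{y\in \mathcal{Y}}\int_{B_d(y,\delta)}\gamma(x)\bigl(e^{g_{y,\delta}(x)}-1\bigr)\,dx,
\]
where $\mathcal{Y}$ is the set of $2K$ ball-centers determined by $\alpha$. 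Since $g_{y,\delta}\ge 0$, every summand is nonnegative and the lower bound $C_{d,\delta}\ge 1$ follows at once; strictness is immediate since $g_{y,\delta}$ is strictly positive on a set of positive measure. Note that this also confirms that $C_{d,\delta}$ does not depend on $\alpha$: the integral over each ball $B_d(y,\delta)$ with $y\in\{\pm x_i\}$ has the same value $\int_{B_d(x_i,\delta)}\gamma(x)(e^{g_{x_i,\delta}(x)}-1)\,dx$ by the symmetry $\gamma(x)=\gamma(-x)$.

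For the upper bound, I would use the pointwise bound $g_{y,\delta}(x)\le \delta^2/4$, which gives $e^{g_{y,\delta}(x)}-1\le e^{\delta^2/4}-1\le \delta^2/2$ since $\delta$ is taken small (in particular $\delta\in(0,e^{-Cd})$). I would also bound $\gamma(x)\le(2\pi)^{-d/2}$ uniformly. Therefore each summand is at most $(2\pi)^{-d/2}\vol(B_d(\delta))\,\delta^2/2$, and summing over the $2K$ balls yields
\[
C_{d,\delta}-1 \le K\,\vol(B_d(\delta))\,(2\pi)^{-d/2}\,\delta^2.
\]
Plugging in the value $K = c^d\,\vol(B_d(\sqrt{2d}))/\vol(B_d(\delta))$ from Lemma~\ref{Lem:Ballsinbin}, the $\vol(B_d(\delta))$ factors cancel, leaving
\[
C_{d,\delta}-1 \le c^d\,(2\pi)^{-d/2}\,\vol(B_d(\sqrt{2d}))\,\delta^2.
\]

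Finally, I would evaluate the geometric factor using Lemma~\ref{Lem:EuclideanBallVolume}: $\vol(B_d(\sqrt{2d})) = (2d)^{d/2}\vol(B_d) = (1+O(1/d))(\pi d)^{-1/2}(4\pi e)^{d/2}$, so
\[
(2\pi)^{-d/2}\,\vol(B_d(\sqrt{2d})) = (1+O(1/d))(\pi d)^{-1/2}(2e)^{d/2}.
\]
Hence $C_{d,\delta}-1 \le (c\sqrt{2e})^{d}(\pi d)^{-1/2}\delta^2$. Since the absolute constant $c$ in Lemma~\ref{Lem:Ballsinbin} may be taken small enough that $c\sqrt{2e}<e^{-1}$, this gives $C_{d,\delta}-1\le e^{-Cd}\delta^2$ for a suitable absolute constant $C>0$, completing the upper bound.

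The only place any care is needed is ensuring that the absolute constant $c$ from the ball-packing lemma is chosen small enough to dominate the $(2e)^{d/2}$ factor—there is no serious obstacle because $c$ is a free constant; the rest is bookkeeping with standard Gaussian and Euclidean-ball estimates.
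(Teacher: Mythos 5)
Your proof is correct and follows essentially the same route as the paper's: localize the perturbation to the disjoint balls, bound $e^{g_{y,\delta}}-1$ by $O(\delta^2)$ and $\gamma$ by $(2\pi)^{-d/2}$, sum, and substitute the value of $K$ from Lemma~\ref{Lem:Ballsinbin}. One small inaccuracy: for a fixed $\alpha$ the exponent is supported on $K$ balls (each index $i$ contributes exactly one of $B_d(x_i,\delta)$ or $B_d(-x_i,\delta)$, depending on whether $\alpha_i$ is $1$ or $0$), not $2K$; this only makes your bound loose by a harmless factor of $2$. Where your write-up actually improves on the paper is in the bookkeeping at the end: you keep the $c^d(2\pi)^{-d/2}$ factor, evaluate $(2\pi)^{-d/2}\vol(B_d(\sqrt{2d}))\asymp(2e)^{d/2}(\pi d)^{-1/2}$, and observe that taking $c$ small in the packing lemma makes $(c\sqrt{2e})^d = e^{-Cd}$, thereby arriving at the claimed $e^{-Cd}\delta^2$ cleanly. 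The paper's displayed chain silently absorbs $c^d(2\pi)^{-d/2}$ into an unquantified $C$ and ends at the weaker (and mismatched) bound $1+e^{-cd}$, then appeals to $\delta<e^{-2Cd}$ to patch things up; your derivation makes the source of the $e^{-Cd}$ explicit and matches the lemma statement as written.
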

\begin{proof}
Indeed, we can check the normalization factor for $ \alpha = (1,\ldots,1) $: by \eqref{Eq:Logcon},
\begin{align*}
\int_{\R^{d}}f_\alpha(x) dx - 1 &=\sum_{i=1}^{K}\int_{B_{d}(x_{i},\delta)}(e^{g_{x_{i},\delta}}(x)-1)\gamma(x)dx\\
& \leq C\cdot K\cdot\delta^{2}\int_{B_{d}(\delta)}\gamma(x)dx\\
& \leq C\cdot K\cdot\delta^{2}\vol(B_{d}(\delta))\max_{x\in\R^{d}}\gamma(x)\\
& \leq C\cdot\vol\left(B_{d}(\sqrt{2d})\right)\delta^{2}.
\end{align*}
 where we used Lemma \ref{Lem:EuclideanBallVolume} and \eqref{Eq:Bounds}. Thus, the normalization factor $C_{d,\delta} \le 1+ e^{-cd}$, and $C_{d,\delta} \ge 1$ since $e^{g_\alpha(x)} \ge 1$. Observe that we assumed that $\delta < e^{-2Cd}$, and therefore this normalization factor is small.
\end{proof}
Next, we bound the distances between densities in this class.
\begin{lemma} \label{lem:alpha-beta}
For any $\alpha,\beta \in \{0,1\}^{K}$,
\[
c_1^d\|\alpha-\beta\|_{0}\delta^{d+2}\mathrm{vol}(B_{d})
\le d_{\TV}(f_{\alpha},f_{\beta})
\le c_2^d\|\alpha-\beta\|_{0}\delta^{d+2}\mathrm{vol}(B_{d}),
\]
and
\[
c_3^{d}\|\alpha-\beta\|_0\delta^{d+4}\mathrm{vol}(B_d)
\le h^2(f_{\alpha},f_{\beta}) 
\le c_{4}^{d}\|\alpha-\beta\|_{0}\delta^{d+4}\mathrm{vol}(B_{d}).
\]
In particular,
\begin{equation}\label{Eq:corr}
c_5^d h^2(f_{\alpha},f_{\beta}) \leq  \delta^2d_{\TV}(f_{\alpha},f_{\beta}) \leq c_6^d h^2(f_{\alpha},f_{\beta}).
\end{equation}
\end{lemma}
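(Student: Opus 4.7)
The plan is to exploit the fact that $f_\alpha$ and $f_\beta$ agree outside a very explicit set, and to compute the distances as a sum of identical per-ball contributions indexed by the Hamming support of $\alpha - \beta$.

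First I would localize the integrals. By Lemma~\ref{Lem:Ballsinbin} the balls $B_d(\pm x_i,\delta)$ are pairwise disjoint, and $g_{x_i,\delta}$ is supported in $B_d(x_i,\delta)$, so $f_\alpha(x) = f_\beta(x)$ for every $x$ outside the union $\bigcup_{i:\alpha_i\ne\beta_i}\br{B_d(x_i,\delta)\cup B_d(-x_i,\delta)}$. On $B_d(x_i,\delta)$ when, say, $\alpha_i=1$, $\beta_i=0$, the definitions give $f_\alpha(x)=C_{d,\delta}^{-1}\gamma(x)e^{g_{x_i,\delta}(x)}$ and $f_\beta(x)=C_{d,\delta}^{-1}\gamma(x)$, while the roles swap on $B_d(-x_i,\delta)$. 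By the central symmetries $\gamma(x)=\gamma(-x)$ and $g_{-x_i,\delta}(\cdot)=g_{x_i,\delta}(-\cdot)$, the $B_d(x_i,\delta)$ and $B_d(-x_i,\delta)$ integrals are equal, so each index $i$ with $\alpha_i\ne\beta_i$ contributes twice the same single-ball quantity.

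Next I would control the per-ball integrand in sandwich form. Since $g_{x_i,\delta}(x)=\tfrac14(\delta-\|x-x_i\|)^2$, the function is in $[\delta^2/16,\delta^2/4]$ on $B_d(x_i,\delta/2)$ and in $[0,\delta^2/4]$ on $B_d(x_i,\delta)$. Combined with the elementary bounds $t\le e^t-1\le 2t$ and $t/2\le e^{t/2}-1\le t$ for small $t\ge 0$, this gives pointwise two-sided control of $e^{g_{x_i,\delta}(x)}-1$ and of $e^{g_{x_i,\delta}(x)/2}-1$ of order $\delta^2$ and order $\delta^2$ again respectively, with multiplicative constants that are absolute. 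For the Gaussian factor I use that every bump center satisfies $\|x_i\|\le\sqrt{2d}$, hence on every relevant ball $\gamma(x)\in[c_1^d,c_2^d]$ for absolute constants $c_1,c_2\in(0,1)\cup(1,\infty)$, and the normalization $C_{d,\delta}\in(1,1+e^{-Cd}\delta^2)$ was already established.

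Putting these together I would compute, for each $i$ in the Hamming support, the lower bound
\[
\int_{B_d(x_i,\delta/2)}\abs{f_\alpha-f_\beta}\,dx
\;\ge\; c_1^d\cdot\frac{\delta^2}{16}\cdot\vol(B_d(\delta/2))
\;\ge\; c_3^d\,\delta^{d+2}\vol(B_d),
\]
and the matching upper bound over all of $B_d(x_i,\delta)$ of the same form $c_4^d\,\delta^{d+2}\vol(B_d)$. Summing $2\|\alpha-\beta\|_0$ such contributions and dividing by $2$ (from the definition of $d_{\TV}$) yields the first pair of inequalities. For the Hellinger bound I repeat the same argument with $(\sqrt{f_\alpha}-\sqrt{f_\beta})^2 = f_\beta\,(e^{g_{x_i,\delta}/2}-1)^2$; squaring the sandwich of $e^{g/2}-1$ replaces one factor of $\delta^2$ by two factors of $\delta^2$, producing the exponent $d+4$ in place of $d+2$. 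Finally, \eqref{Eq:corr} is obtained by taking the ratio of the two two-sided bounds, since $\delta^{d+4}/\delta^{d+2}=\delta^2$ and all the dimension-dependent prefactors are of the form $c^d$.

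The only real difficulty is purely bookkeeping: checking that the dimensional constants really are of the form $c^d$ rather than something weaker. This is routine because (i) $\gamma$ is Gaussian and all bump centers sit in $B_d(\sqrt{2d})$, giving $\gamma\in[c_1^d,c_2^d]$ on every bump, (ii) the normalization constant is $1+o(1)$ by the earlier computation, and (iii) the Taylor expansions of $e^t-1$ and $e^{t/2}-1$ give absolute (dimension-free) constants since $\delta<e^{-Cd}$ forces $g\ll 1$. No further delicate estimates are needed.
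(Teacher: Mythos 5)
Your proposal is correct and follows essentially the same route as the paper: localize to the balls indexed by the Hamming support, use the two-sided pointwise bounds on $e^{g}-1$ (and $e^{g/2}-1$) at scale $\delta^2$, control the Gaussian factor on $B_d(\sqrt{2d})$ by $c^d$, and read off the exponent gap $d+2$ versus $d+4$ from the squaring in the Hellinger integrand. The only cosmetic difference is that you organize the computation as an explicit per-ball contribution and invoke the $x\mapsto -x$ symmetry to pair the antipodal balls, which is implicit in the paper's treatment.
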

\begin{remark}{\label{rem:lowerbound}}
Before we prove this Lemma, we show that the lower bound for the $\epsilon$-entropy numbers, with respect to the total variation follows from it. Indeed, if we consider all the distributions with $\|\alpha\|_0 = K/2$, then we have a family of distributions such that its cardinally is a least $2^{c^{-d}\cdot \delta^{-d}}$, and the elements are $c_1^{d} \cdot \delta^2$-separated. It is easy to see that, when $\delta < e^{-Cd}$ these distributions are almost isotropic, and the lower bound follows if we set $\epsilon = \delta^2$.
\end{remark}

\begin{proof}
Observe that $f_{\alpha}$ and $f_{\beta}$ differ only on the balls corresponding to  where they differ. Clearly, there are $2\|\alpha-\beta\|_0$ such balls. Define $$A_{\alpha,\beta}:= \left\{ \bigcup_{1 \le i \le K \colon \alpha_i\ne \beta_i}\left(B_d(x_i,\delta) \cup B_d(-x_i,\delta)\right)\right\}.$$ 
We bound $ d_{\TV}(f_{\alpha},f_{\beta}) $ as follows: 
\begin{equation*}
\begin{aligned}d_{\TV}(f_{\alpha},f_{\beta}) & =\int_{\R^{d}}|f_{\alpha}(x)-f_{\beta}(x)|dx\\
& \geq C_{d,\delta}^{-1}\int_{A_{\alpha,\beta}}\left(\prod_{i=1}^{K}e^{|\alpha_{i}-\beta_{i}|g_{x_{i},\delta}}(x)-1\right)\gamma(x)dx\\
& \geq C\min_{y\in B_{d}(\sqrt{2d})}\gamma(y)\int_{A_{\alpha,\beta}}\left(\prod_{i=1}^{K}e^{|\alpha_{i}-\beta_{i}|g_{x_{i},\delta}}(x)-1\right)dx\\
& \geq c_{7}^{d} \|\alpha-\beta\|_{0}\int_{B_{d}(\delta/2)}\left(e^{g_{0,\delta}}(x)-1\right)dx\\
& \geq c_{7}^{d} \|\alpha-\beta\|_{0}\int_{B_{d}(\delta/2)}\frac{\delta^2}{16}dx\\
& \geq c_1^d\|\alpha-\beta\|_{0}\delta^{d+2}\mathrm{vol}(B_{d}),\\
\end{aligned}
\end{equation*}
where we used \eqref{Eq:densityForalpha} and the fact that $e^{g_{x_i,\delta}}(x) \geq 1+\frac{1}{16}\delta^2$ on the $ B(x_i,\frac{\delta}{2})$. Similarly, one can upper bound the total variation distance.

Also, the following bound holds for  the squared Hellinger, 
\begin{equation*}
\begin{aligned}h^{2}(f_{\alpha},f_{\beta}) & =C_{d,\delta}^{-1}\int_{A_{\alpha,\beta}}\left(\sqrt{\prod_{i=1}^{K}\left(e^{|\alpha_{i}-\beta_{i}|g_{x_{i},\delta}}(x)\right)}-1\right)^{2}\gamma(x)dx\\
& \leq C\max_{y\in B_{d}(\sqrt{2d})}\gamma(y)\int_{A_{\alpha,\beta}}\left(\sqrt{\prod_{i=1}^{K}\left(e^{|\alpha_{i}-\beta_{i}|g_{x_{i},\delta}}(x)\right)}-1\right)^{2}dx\\
& \leq c_{1}^{d}\|\alpha-\beta\|_{0}\int_{B_d(\delta)}\left(\sqrt{e^{g_{0,\delta}}(x)}-1\right)^{2}dx\\
& \leq  c_4^{d}\delta^{d+4}\mathrm{vol}(B_{d})\|\alpha-\beta\|_{0},
\end{aligned}
\end{equation*}
where we used the identity $ \sqrt{1+t} \leq 1+0.5t $ when $ t>0  $ is small enough and that $ e^{g_{x_i,\delta}}(x) \leq 1+\frac{1}{4}\delta^2.$  Similarly, one can lower bound the Hellinger squared. This concludes the proof.
\end{proof}

We apply Assouad's  Lemma (Lemma \ref{Lem:Assoud}), to bound the squared Hellinger minimax risk of any estimator. The first requirement in this lemma follows by Lemma~\ref{lem:alpha-beta}, for $$\eta = c^{d}\delta^{d+4}\mathrm{vol}(B_d).$$ 
For the second requirement, for any $\alpha \ne \beta$, we know by Lemma~\ref{lem:alpha-beta} that $$ h^2(f_{\alpha},f_{\beta}) \leq c_4^{d}\delta^{d+4}\mathrm{vol}(B_d)\|\alpha-\beta\|_0.$$ Therefore, we set $\delta$ such that
\[
c_4^{d}\delta^{d+4}\mathrm{vol}(B_d) = \frac{0.5}{n},
\] 
or equivalently
\begin{equation}{\label{Eq:delta}}
     \delta = c_4\mathrm{vol}(B_d)^{\frac{1}{d+4}}n^{-\frac{1}{d+4}} = c_5\sqrt{d}n^{-\frac{1}{d+4}},
\end{equation}
where we used Lemma \ref{Lem:EuclideanBallVolume}.
Finally, apply Assouad's  Lemma with the value $K$ defined above, namely $K = c_1^{d}\delta^{-d}\frac{\mathrm{vol}\left(B_d(\sqrt{2d})\right)}{\mathrm{vol}(B_d)}$ and $\eta = c^{d}\delta^{d+4}\mathrm{vol}(B_d)$ and $c = 1/2$. We get that 
\begin{equation}\label{Eq:HellingerBound}
	\inf_{\bar{f}}\Rcal_{h^2}(\bar{f},\Fcal_d) \geq \sqrt{1-(1/2)^2} \cdot K \cdot \eta \geq c_1^d\delta^4 \geq  c_1^dn^{-\frac{4}{d+4}},
\end{equation}
where we used Lemma~\ref{Lem:EuclideanBallVolume}. Now, in order to switch to the total variation distance, we restrict ourselves to a subset of log-concave estimators that output distributions from $ \mathcal{F}_{\{0,1\}^k} $, namely
\[
	\widetilde{\mathcal{F}}_d := \left\{f: \left(\mathbb{R}^d\right)^n \to \mathcal{F}_{\{0,1\}^{K}}\right\}.
\]
By Eqs. (\ref{Eq:corr}),(\ref{Eq:delta}),(\ref{Eq:HellingerBound}), we conclude that
 \begin{equation*}
 \inf_{\tilde{f} \in \widetilde{\mathcal{F}}_d}\Rcal_{\TV}(\tilde{f},\Fcal_d) 
 \geq c_1^d n^{\frac{2}{d+4}}\inf_{\tilde{f} \in \widetilde{\mathcal{F}}_d}\Rcal_{h^2}(\bar{f},\Fcal_d)  
 \geq c_2^d n^{\frac{2}{d+4}}\inf_{\bar{f}}\Rcal_{h^2}(\bar{f},\Fcal_d)  \geq c_3^dn^{-\frac{2}{d+4}}. 
 \end{equation*}
Using the fact that $\inf_{\bar{f}}\Rcal_{\TV}(\bar{f},\Fcal_d) \ge
\inf_{\tilde{f} \in \widetilde{\mathcal{F}}_d}\Rcal_{\TV}(\tilde{f},\Fcal_d)/2$ (see Lemma \ref{Lem:gene} in the Supplementary) the claim follows.
\section{Proof of Theorem \ref{thm:TV}, Upper Bound}\label{sec:TV_upper}
First, we choose a maximal $\epsilon$-packing with respect to the total variation distance. It is also an $\epsilon$-covering, denoted by $\mathcal{N}_{\TV}(\epsilon)$. Then, we define a family of sets
$$\mathcal{A} := \{ A_{ij} \subset \R^d:  1\leq i < j \leq |\mathcal{N}_{\TV}(\epsilon)|\},$$ where $A_{ij}$ is a set such that $|\PP_{f_i}(A_{ij})- \PP_{f_j}(A_{ij})| = d_{\TV}(f_i,f_j) \geq \epsilon$ for $f_i\ne f_j \in  \mathcal{N}_{\TV}(\epsilon) $. 
The estimator is a ``tournament'', defined for  $X_1,\ldots,X_n$ i.i.d. samples generated from $f_0$ as
$$\bar{f_0}(X_1,\ldots,X_n) := \argmin{f \in \mathcal{N}_{\TV}(\epsilon)}\max_{A \in \mathcal{A}}|\PP_{n,f_0}(A) - \PP_{f}(A)|,$$
where $\PP_{n,f_0}$ is the empirical measure of these samples. 
%Then, by the triangle inequality
%\[
%    \bar{f_0} = \argmin{f \in \mathcal{N}_{\TV}(\epsilon)}\max_{A \in \mathcal{A}}|\PP_{f_0}(A) - \PP_{f}(A)| + \eta_{A,f},
%\]
%where $|\eta_{A,f}| \leq |\PP_{n,f_0}(A) - \PP_{f_0}(A)|$.  

Using a standard result from empirical processes theory, we know that with high probability the following holds:
$$ \max_{A \in \mathcal{A}}|\PP_{n,f_0}(A) - \PP_{f_0}(A)| \leq C\sqrt{\log |\mathcal{A}|/n} = C\sqrt{\log |\mathcal{N}_{\TV}(\epsilon)|/n}.$$ 
Now, we require that $C\sqrt{\log |\mathcal{N}_{\TV}(\epsilon)|/n} \leq \epsilon$, and we denote by $\tilde{f}_0$  the closest member in the net to $f_0$. Observe that for any $d_{\TV}(f_i,\tilde{f}_0) > 4\epsilon$ the following holds:
\begin{align*}
   \max_{A \in \mathcal{A}}|\PP_{n,f_0}(A) - \PP_{f_i}(A)| \geq  \max_{A \in \mathcal{A}}|\PP_{f_0}(A) - \PP_{f_i}(A)| - \epsilon \geq  \max_{A \in \mathcal{A}}|\PP_{\tilde{f}_0}(A) - \PP_{f_i}(A)| - 2\epsilon > 2\epsilon 
\end{align*}
where  we used the last two equations. However,
\begin{align*}
   \argmin{f \in \mathcal{N}_{\TV}(\epsilon)}\max_{A \in \mathcal{A}}|\PP_{n,f_0}(A) - \PP_{f}(A)| &\leq \max_{A \in \mathcal{A}}|\PP_{\tilde{f}_0}(A) - \PP_{f_0,n}(A)| \\&\leq  \max_{A \in \mathcal{A}}|\PP_{f_0}(A) - \PP_{\tilde{f}_0}(A)| + \epsilon \leq 2\epsilon.
\end{align*}
   
Thus, we showed that $\tilde{f}_0$ is at least a $4\epsilon$-approximation to $f_0$, with respect to the total variation distance, if $ C\sqrt{\log |\mathcal{N}_{\TV}(\epsilon)|/n} \leq \epsilon$. Hence, by the upper bound on the $\epsilon$-entropy in Theorem \ref{Thm:Entropy}, we derive that
\[
	 O_d(\sqrt{(\epsilon/\log(\epsilon^{-1})^{(d+1)})^{-d/2}/n}) \leq \epsilon \Rightarrow \epsilon \approx C_dn^{-\frac{2}{d+4}}\log^{\frac{d(d+1)}{d+4}}(n).
\]
Thus, this estimator gives an $O_d(n^{-\frac{2}{d+4}}\log^{\frac{d(d+1)}{d+4}}(n)) $ approximation, and the claim follows for almost isotropic log-concave distributions.  

% Now, we show that the aforementioned $\epsilon$-entropy numbers are bounded  by $\tilde{O}_d((\epsilon\log(\epsilon))^{-d/2})$. 

Finally, we extend this estimator to an arbitrary log-concave distribution, by using a technique of data standardization (see sub-section \ref{Extsub} in the Supplementary).

\newpage
\title{Supplementary}
\section{Proof outline of the log-concave reduction to convex sets} \label{sec:log-concave-pr}
The proof idea is as follows: let $\hat{f}$ be the maximum likelihood estimator. Hence, by definition of the MLE, $\PP_n(\log \hat{f}) \ge \PP_n(\log f_0)$, or equivalently, $\PP_n \log (f_0/\hat{f}) \le 0$. If we could argue that $\PP\log(f_0/\hat{f}) \le \PP_n \log(f_0/\hat{f}) + O_d(n^{-2/(d+1)})$, this would imply the following:
\begin{equation} \label{eq:8}
h^2(f_0,\hat{f}) \le d_{KL}(f_0\|\hat{f}) = \PP \log(f_0/\hat{f}) \le \PP_n \log(f_0/\hat{f}) + O_d(n^{-2/(d+1)}\log(n)) \le O_d(n^{-2/(d+1)}\log(n)),
\end{equation}
where $d_{KL}$ denotes the KL divergence which is known to upper bound the squared Hellinger distance. Hence, the claim follows. 
The problem here is that the above KL divergence is infinite. Recall that  $\hat{f}$ is supported on the convex hull of the samples. Hence, for any $x$ which is outside the convex hull, $\log(f_0(x)/\hat{f}(x)) = \infty$, which implies that $d_{KL}(f_0\|\hat{f}) = \infty$. 

In order to overcome this problem, one can approximate $\hat{f}$ with a density $f'$ which is approximately $\hat{f}$ (up to negligible factor), but it is non-zero everywhere. 

Therefore, by the triangle inequality it is enough to bound $h^2(f_0,f')$. Clearly, we may assume that that in \eqref{eq:8},  $\hat{f}$ will be replaced by $f'$. Hence, by \eqref{eq:8} it enough to show that $$\PP \log(f_0 / f') \le  (\PP - \PP_n)(\log f_0) +(\PP-\PP_n)(\log(1/f')) + O_d(n^{-2/(d+1)}\log(n)).$$

For this purpose we will bound the two terms. Since $\PP$ is log concave, the first term can be easily bound using standard concentration inequalities.

for the second term, we can assume that $f'$ and $f_0$ are almost isotropic and thus their densities are bounded from above by $C(d)$, hence $\log(1/f') \geq -C_d$. Moreover, up to negligible perturbation we may assume that $ f' \geq O_d(n^{-C(d)})$. Hence, the following holds:
\begin{equation} \label{eq:11}
\begin{aligned}
(\PP - \PP_n) (\log (1/f')) 
&= \int_{t = -C_d}^\infty (\PP - \PP_n) (\{x \colon \log(1/f'(x)) \ge t \})
= \int_{t = -C_d}^\infty (\PP - \PP_n) (\mathbb{R}^d \setminus A_t)\\
&= \int_{t = -C_d}^{C_d \log n} (\PP_n - \PP) (A_t)
+ o(n^{-\frac{2}{d+1}}\log(n))\\
&\le \int_{t = -C_d}^{C_d \log n} C'_d n^{-2/(d+1)}
+ o(n^{-\frac{2}{d+1}}\log(n))
\le O_d(n^{-2/(d+1)} \log n).
\end{aligned}
\end{equation}
where we used the tail formula in the first equality, in the second inequality we used the fact that drawing a point with density that is small is negligible since $f_0$ is log-concave. In the last inequality we used the fact that $A_t$ is a convex set, and applied Corollary~\ref{cor:conv-mcdiarmid}. Thus, the claim follows.   

\section{Remaining Supporting Results}\label{looseends}

The following Lemma can be found for example in  \cite{artstein2015asymptotic}.
\begin{lemma}\label{Lem:packing}
	Let $\epsilon \in (0,c)$ for some universal constant $c$, and let $l_2$ be the Euclidean distance. Then,
	\[
	c_2^{d}\epsilon^{-d} \leq  M_{l_2}(\epsilon,B_d) \leq c_1^{d}\epsilon^{-d},
	\]
	where $M_{l_2}(\epsilon,B_d) $ to be the maximal $\epsilon$-separated on the Euclidean ball with respect to the $l_2$ distance. 
\end{lemma}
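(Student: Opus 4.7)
The plan is to prove both bounds via the standard volume comparison argument. Fix a maximal $\epsilon$-separated subset $x_1,\dots,x_N \in B_d$, so that $\|x_i - x_j\|_2 > \epsilon$ for all $i \ne j$, where $N = M_{l_2}(\epsilon, B_d)$.

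For the upper bound, I would observe that the open balls $B_d(x_i, \epsilon/2)$ are pairwise disjoint (any common point would witness $\|x_i - x_j\|_2 < \epsilon$, contradicting separation), and they all lie inside $B_d(1 + \epsilon/2)$ since each $x_i \in B_d$. Comparing Lebesgue volumes yields
\[
N \cdot (\epsilon/2)^d \vol(B_d) \;\le\; (1 + \epsilon/2)^d \vol(B_d),
\]
so that $N \le (1 + 2/\epsilon)^d \le (3/\epsilon)^d$ for $\epsilon \in (0,1)$. This gives $c_1 = 3$.

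For the lower bound, I would exploit the duality between packing and covering: because $\{x_1,\dots,x_N\}$ is a \emph{maximal} $\epsilon$-separated set, it is automatically an $\epsilon$-net of $B_d$, since any point of $B_d$ at distance greater than $\epsilon$ from all $x_i$ could be adjoined, contradicting maximality. Hence $B_d \subseteq \bigcup_{i=1}^N B_d(x_i, \epsilon)$, so
\[
\vol(B_d) \;\le\; N \cdot \epsilon^d \vol(B_d),
\]
which gives $N \ge \epsilon^{-d}$, i.e., $c_2 = 1$.

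There is no substantive obstacle here: both directions follow immediately from a one-line volume count once the correct enclosing/covering balls are identified. The only thing to be careful about is the asymmetry in radii (packing uses radius $\epsilon/2$ to get disjointness; covering uses radius $\epsilon$ via maximality), and the mild restriction $\epsilon < c$ which is needed only to absorb the $(1 + \epsilon/2)^d$ factor into a clean form $c_1^d \epsilon^{-d}$.
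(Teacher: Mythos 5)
Your proof is correct and is the canonical volume-comparison argument for packing and covering numbers of the Euclidean ball. The paper itself does not prove this lemma but simply cites \cite{artstein2015asymptotic}, so there is no in-paper proof to compare against; your argument is precisely the standard one found there and in most references. Both halves check out: the upper bound via disjointness of radius-$\epsilon/2$ balls inside $B_d(1+\epsilon/2)$ giving $N \le (1+2/\epsilon)^d \le (3/\epsilon)^d$ for $\epsilon < 1$, and the lower bound via maximality implying $\{x_i\}$ is an $\epsilon$-net, so $\vol(B_d) \le N\epsilon^d\vol(B_d)$ and $N \ge \epsilon^{-d}$. Your $c_2 = 1$ is in fact stronger than the stated $c_2^d\epsilon^{-d}$ with $c_2 < 1$, which per the paper's convention is a constant below one, so no issue there.
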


%\begin{lemma} \label{lem:trivial}
%	Let $\mathcal{A}$ be a family of measurable subsets of $\mathbb{R}^d$, and let $n \in \mathbb{N}$ be a positive integer.
%	Then,
%	\[
%	\E\left[ \sup_{S \in \mathcal{A}} | \PP_n(S) - \PP(S)|/n \right]
%	\le 2P(\bigcup\mathcal{A}).
%	\]
%\end{lemma}

%\begin{proof}
%	The following holds:
%	\[
%	\E \left[\sup_{S\in \mathcal{A}} \frac{|\PP_n(S) - \PP(S)|}{n}\right]
%	\le \E \left[ \sup_{S\in \mathcal{A}} |\PP_n(S)|/n \right]
%	+ \E \left[ \sup_{S\in \mathcal{A}} |\PP(S)|/n \right]
%	\le 2\E\left[ Z_{\bigcup \mathcal{A}}/n\right]
%	=2 P(\bigcup \mathcal{A}).
%	\]
%\end{proof}

\subsection{Proof of Lemma~\ref{Lem:concavefunction}} \label{subsec:pf-lem-conc}
We start with some equivalent definition of a concave function over $\mathbb{R}^d$. First, it follows from definition that a function is concave if and only if it is concave on any affine line, namely: 
\begin{claim}
	A function $\phi \colon \mathbb{R}^d \to \mathbb{R}$ is concave if for any $x_0 \in \mathbb{R}^d$ and any $v \in \mathbb{S}^{d-1}$, the function $\psi \colon \mathbb{R}\ \to \mathbb{R}$ defined by $\psi(t) = \phi(x_0 + t v)$ is concave.
\end{claim}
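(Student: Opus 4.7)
The plan is to reduce the claim directly to the standard definition of concavity: $\phi$ is concave if for every $x, y \in \mathbb{R}^d$ and every $\lambda \in [0,1]$, we have $\phi(\lambda x + (1-\lambda)y) \geq \lambda \phi(x) + (1-\lambda)\phi(y)$. Since this is a two-point inequality, it is naturally something that lives on the affine line through $x$ and $y$, which is exactly the object the hypothesis controls.

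Given $x \neq y$ in $\mathbb{R}^d$ (the case $x = y$ being trivial), I would set $x_0 := x$, $v := (y-x)/\|y-x\|_2 \in \mathbb{S}^{d-1}$, and $T := \|y-x\|_2 > 0$. Then $x = x_0 + 0\cdot v$ and $y = x_0 + T v$, so for any $\lambda \in [0,1]$, the convex combination satisfies $\lambda x + (1-\lambda)y = x_0 + (1-\lambda)T \cdot v$. Applying the hypothesis to this choice of $x_0$ and $v$, the function $\psi(t) = \phi(x_0 + tv)$ is concave on $\mathbb{R}$, so
\begin{equation*}
\psi\bigl(\lambda\cdot 0 + (1-\lambda)\cdot T\bigr) \;\geq\; \lambda\,\psi(0) + (1-\lambda)\,\psi(T).
\end{equation*}
Unpacking the definition of $\psi$ gives exactly $\phi(\lambda x + (1-\lambda)y) \geq \lambda\,\phi(x) + (1-\lambda)\,\phi(y)$, which is the desired concavity inequality for $\phi$.

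Since there is no genuine obstacle here, the whole argument is essentially a change of variables together with the one-dimensional definition of concavity applied to $\psi$. The only thing to be careful about is the degenerate cases ($x = y$, or the sign of the parametrization when choosing $v$), but both are handled trivially: when $x = y$ the inequality is an equality, and reversing the direction of $v$ only reparametrizes $\psi$ by $t \mapsto -t$, which preserves concavity. No result from the main paper is needed.
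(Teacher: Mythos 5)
Your proof is correct, and it is exactly the standard unwinding that the paper leaves implicit (the paper merely asserts that this "follows from definition"). The change of variables $x_0 = x$, $v = (y-x)/\|y-x\|_2$, $T = \|y-x\|_2$ together with the one-dimensional concavity of $\psi$ is precisely the intended argument.
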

Clearly a univariate function is concave if and only if for any $x\in \mathbb{R}$ there exists a neighborhood where it is concave. Formally:
\begin{claim}
	A function $\psi \colon \mathbb{R} \to \mathbb{R}$ is concave if an only if for any $x \in \mathbb{R}$ there exists $\epsilon_x > 0$ such that $\psi$ is concave on $(x - \epsilon_x, x+\epsilon_x)$.
\end{claim}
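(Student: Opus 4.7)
The forward implication is immediate: if $\psi$ is globally concave, then for any $x$ one may take $\epsilon_x$ to be anything positive and the restriction is concave. The plan is therefore to establish the reverse implication by a connectedness/extremal-point argument.

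Assume $\psi$ is locally concave at every point. Since concave functions on open intervals are continuous (a standard fact), local concavity at every point implies that $\psi$ is continuous on $\mathbb{R}$. Suppose toward a contradiction that $\psi$ is not globally concave. Then there exist $a < b < c$ and an affine function $L$ with $L(a) = \psi(a)$ and $L(c) = \psi(c)$ such that $\psi(b) < L(b)$. Define $h := \psi - L$. Subtracting an affine function preserves local concavity, so $h$ is locally concave everywhere, continuous, and satisfies $h(a) = h(c) = 0$ and $h(b) < 0$. By continuity, $h$ attains its minimum $m$ on $[a,c]$; since $m \le h(b) < 0 = h(a) = h(c)$, every minimizer lies in the open interval $(a,c)$.

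The key local observation is that a concave function on an open interval that attains a local minimum at an interior point must be constant on a neighborhood of that point. Indeed, if $h$ is concave on $(x_0 - \delta, x_0 + \delta)$ and has a local minimum at $x_0$, then for any $y \in (x_0-\delta, x_0)$, setting $z := 2x_0 - y$ gives both $h(x_0) \geq \tfrac{1}{2}(h(y) + h(z))$ (by concavity) and $h(y), h(z) \geq h(x_0)$ (by the minimum), forcing $h(y) = h(z) = h(x_0)$. Hence $h \equiv h(x_0)$ on $(x_0-\delta, x_0+\delta)$.

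Now consider $S := \{x \in [a,c] : h(x) = m\}$. The set $S$ is closed in $[a,c]$ by continuity and is contained in $(a,c)$ by the preceding paragraph. For any $x_0 \in S$, local concavity provides some $\delta > 0$ such that $h$ is concave on $(x_0-\delta, x_0+\delta) \subset (a,c)$, and the local observation shows $h \equiv m$ on this neighborhood, which is therefore contained in $S$. Thus $S$ is also open in $[a,c]$. Since $[a,c]$ is connected and $S$ is a nonempty clopen subset, $S = [a,c]$; but $a \notin S$ because $h(a) = 0 \neq m$, a contradiction. The main obstacle, modest as it is, is the local lemma about concave functions with interior minima; once that is in hand, the clopen-in-a-connected-set argument closes the proof cleanly.
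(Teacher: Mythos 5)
Your argument is correct. The paper itself does not prove this claim: it prefaces it with ``Clearly'' and moves on, so there is no proof of record to compare against. Your route --- reduce to the chord-subtracted function $h$, locate an interior global minimizer on $[a,c]$, observe that a locally concave function is locally constant around an interior local minimum, and close with a clopen-subset-of-a-connected-set argument --- is complete and clean, and it also supplies the (needed but unstated) fact that local concavity forces continuity, which you use both for the extreme value theorem and for closedness of the minimizer set $S$.

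One small point worth tightening in the statement of your ``key local observation'': as phrased, the constancy neighborhood should be taken inside the intersection of the concavity neighborhood and the local-minimum neighborhood; in your application this is automatic because $x_0$ is a global minimizer on $[a,c]$ and you shrink $\delta$ so that $(x_0-\delta,x_0+\delta)\subset(a,c)$, but a reader could trip on the abstract statement. For comparison, a more ``constructive'' alternative that avoids the clopen argument is a gluing lemma: if $\psi$ is concave on two overlapping open intervals $I$ and $J$, then it is concave on $I\cup J$ (proved via the monotone-slope characterization of concavity), after which compactness of $[a,c]$ lets you glue finitely many local concavity intervals. The gluing route is more elementary in the sense of not invoking connectedness, but the slope bookkeeping is fussier; your version is shorter and arguably more transparent.
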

Combining this two claims, we obtain the following equivalence:
\begin{claim} \label{cla:conc-equivalence}
	A function $\phi \colon \mathbb{R}^d \to \mathbb{R}$ is concave if and only if for any $x_0 \in \mathbb{R}^d$ and any $v \in \mathbb{S}^{d-1}$ there exists $\epsilon > 0$ such that the function $\psi_{x_0,v,\epsilon} \colon (- \epsilon, \epsilon) \to \mathbb{R}$ defined by $\psi_{x_0,v,\epsilon}(t) = \phi(x_0 + t v)_{t\in (-\epsilon,\epsilon)}$ is concave.
\end{claim}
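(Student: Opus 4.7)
The plan is to prove the equivalence by directly combining the two preceding claims; no new machinery is needed, only a careful reduction of the multivariate local-concavity hypothesis to the univariate one and then back up to $\R^d$.

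For the forward direction, assume $\phi$ is concave. By the first claim, for every $x_0 \in \R^d$ and every $v \in \Sp^{d-1}$, the function $t \mapsto \phi(x_0 + tv)$ is concave on all of $\R$. In particular, its restriction to $(-\epsilon,\epsilon)$ is concave for every $\epsilon > 0$, so the condition in Claim~\ref{cla:conc-equivalence} holds trivially. This direction requires nothing beyond unpacking definitions.

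The backward direction is where the two auxiliary claims do the work. Assume that for each $x_0 \in \R^d$ and each $v \in \Sp^{d-1}$ there is some $\epsilon = \epsilon(x_0,v) > 0$ such that $\psi_{x_0,v,\epsilon}(t) = \phi(x_0 + tv)$ is concave on $(-\epsilon,\epsilon)$. Fix an arbitrary $x_0 \in \R^d$ and $v \in \Sp^{d-1}$, and define $\psi \colon \R \to \R$ by $\psi(t) = \phi(x_0 + tv)$; the goal is to show that $\psi$ is concave on all of $\R$. For an arbitrary base point $t_0 \in \R$, apply the hypothesis at the shifted center $x_0 + t_0 v$ in direction $v$: there exists $\epsilon_{t_0} > 0$ such that $s \mapsto \phi(x_0 + t_0 v + s v) = \psi(t_0 + s)$ is concave on $(-\epsilon_{t_0}, \epsilon_{t_0})$. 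Equivalently, $\psi$ is concave on the neighborhood $(t_0 - \epsilon_{t_0}, t_0 + \epsilon_{t_0})$ of $t_0$. Since $t_0$ was arbitrary, the hypothesis of the second claim is satisfied, so $\psi$ is globally concave on $\R$. Because this holds for every choice of $x_0$ and $v$, the first claim then yields that $\phi$ is concave on $\R^d$, completing the proof.

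The only conceptually delicate point in this argument is the local-to-global step for univariate functions, but that is precisely what the second displayed claim asserts, so we may invoke it as a black box. Everything else is a direct change of base point (replacing $x_0$ by $x_0 + t_0 v$) to convert pointwise local concavity along a fixed line into neighborhood-wise local concavity of a single univariate function, which is the exact hypothesis of the univariate claim. Thus the proof is essentially a two-line composition of the two preceding claims, with the base-point shift being the only substantive observation.
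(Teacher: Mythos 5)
Your proof is correct and follows the same route the paper intends: the paper simply states that Claim~\ref{cla:conc-equivalence} follows by ``combining'' the two preceding claims, and your argument is precisely that combination, made explicit via the base-point shift $x_0 \mapsto x_0 + t_0 v$ that converts local concavity at each point along a line into the neighborhood-wise hypothesis of the univariate claim.
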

Using Claim~\ref{cla:conc-equivalence}, we will prove that $\log f$ is concave. Pick $x \in \mathbb{R}^d$, and divide into cases:
\begin{itemize}
	\item If $\| x - x_i \| > \delta$ for all $1 \le i \le \ell$, then $\log f = \log \gamma(x)$ in a neighborhood of $f$, where $\log \gamma(x)$ is a concave function. In particular, $\psi_{x,v,\epsilon}$ is concave for any $v \in \mathbb{S}^{d-1}$ and a sufficiently small $\epsilon > 0$.
	\item If $\| x - x_i \| < \delta$ for some $1 \le i \le l$, then $\log f(x)$ equals $\log \gamma(x) + \log g_{x_i,\delta}(x)$ in some neighborhood of $x$. Thus,
	\begin{equation*}
	\log f(x)
	= \log \gamma(x) + \log g_{x_i,\delta}(x)
	= \frac{d}{2}\log (2\pi) + \frac{1}{4}(\delta - \|x-x_i\|)^2 - \| x\|^2 / 2,
	\end{equation*}
	which is a concave function as required.
	\item If $\| x - x_i \| = \delta$, then for any $v \in \mathbb{S}^{d-1}$, there exists $\epsilon > 0$ such that $\psi_{x,v,\epsilon}(t) := \log(f)(x + tv)$ has non-increasing derivative, hence it is concave. The key observation is that $\psi_{x,v,\epsilon}(y)$ is differentiable at $y = 0$, while the monotonicity of the derivative for $y \in (-\epsilon,\epsilon) \setminus \{0\}$ follows from the fact that $\psi_{x,v,\epsilon}$ is concave and differentiable there.
\end{itemize}
\qed
\paragraph{Proof of Lemma~\ref{Lem:Ballsinbin}} \label{sec:pr:ballsin}
%\begin{proof}
Using a standard packing argument (Lemma \ref{Lem:packing}) we can find a set of disjoint balls with radius $ \delta $ in $ B_d(\sqrt{2d}) $ of size
\begin{equation}\label{Eq:TwoCaps}
c^d\frac{\vol(B_d(\sqrt{2d}))}{\vol(B_d(\delta))}.
\end{equation}
Now we take two antipodal caps with height $ 1 - 2\delta $, namely,  
$$ S^{1, \delta}:=  C(x_0,1-2\delta) ~; \quad S^{-1,\delta} := C(-x_0,1-2\delta).$$
Since $ \delta < e^{-Cd},$ by standard volume considerations we know $ \vol(S^{1,\delta} \cup S^{-1,\delta}) > (1-(c'c)^d)\vol(B_d(\sqrt{2d}))$, for a sufficiently small $c'$. Thus by \eqref{Eq:TwoCaps} we must have a cap that contains
\[
(0.5c)^d\frac{\vol(B_d(\sqrt{2d}))}{\vol(B_d(\delta))}.
\]
of the disjoint ball, then we return this set and its antipodal, namely we take each ball in this set and also its antipodal. And the claim follows.
\qed
\begin{lemma}{\label{Lem:gene}}
	$\inf_{\bar{f}}\Rcal_{\TV}(\bar{f},\Fcal_d) \ge
	\inf_{\tilde{f} \in \widetilde{\mathcal{F}}_d}\Rcal_{\TV}(\tilde{f},\Fcal_d)/2$.
\end{lemma}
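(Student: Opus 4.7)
The plan is to use a nearest-point projection (``snapping'') argument: given any estimator $\bar{f}$ valued in $\Fcal_d$, I will construct an estimator $\tilde{f}$ valued in the finite subfamily $\mathcal{F}_{\{0,1\}^K}$ whose TV-risk on that subfamily is at most twice that of $\bar{f}$. This is the standard reduction showing that restricting the output class of an estimator to a finite (well-separated) subfamily only costs a factor of two in the minimax risk, provided risk is measured against that subfamily.

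Concretely, for each realization of the data define
\[
\tilde{f}(X_1,\ldots,X_n) \in \arg\min_{g \in \mathcal{F}_{\{0,1\}^K}} d_{\TV}\!\left(\bar{f}(X_1,\ldots,X_n),\, g\right),
\]
with ties broken by a fixed lexicographic ordering on $\{0,1\}^K$. Because $\mathcal{F}_{\{0,1\}^K}$ is a finite set, this selection is a measurable function of $\bar{f}(X_1,\ldots,X_n)$, so $\tilde{f} \in \widetilde{\mathcal{F}}_d$. For any $f^* \in \mathcal{F}_{\{0,1\}^K}$, the triangle inequality for $d_{\TV}$ combined with the fact that $f^*$ is itself a candidate in the defining $\arg\min$ gives
\[
d_{\TV}(\tilde{f}, f^*) \le d_{\TV}(\tilde{f}, \bar{f}) + d_{\TV}(\bar{f}, f^*) \le 2\, d_{\TV}(\bar{f}, f^*).
\]

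Taking expectations under $X_1,\ldots,X_n \sim f^*$, then supremum over $f^* \in \mathcal{F}_{\{0,1\}^K}$, yields
\[
\sup_{f^* \in \mathcal{F}_{\{0,1\}^K}} \E\, d_{\TV}(\tilde{f}, f^*) \;\le\; 2 \sup_{f^* \in \mathcal{F}_{\{0,1\}^K}} \E\, d_{\TV}(\bar{f}, f^*) \;\le\; 2\, \Rcal_{\TV}(\bar{f}, \Fcal_d),
\]
where the final step uses $\mathcal{F}_{\{0,1\}^K} \subseteq \Fcal_d$. Infimizing the right-hand side over all $\bar{f}$ and the left-hand side over $\tilde{f} \in \widetilde{\mathcal{F}}_d$ (our construction furnishes an admissible $\tilde{f}$) delivers the claim, with $\Rcal_{\TV}(\tilde{f}, \Fcal_d)$ on the right-hand side of the lemma interpreted as the supremum over the separated subfamily $\mathcal{F}_{\{0,1\}^K}$---the same quantity that is actually produced and used in the derivation of Section~\ref{sec:TV_lower}.

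There is no genuine technical obstacle here; the argument is a routine projection-plus-triangle reduction. The only two minor matters to handle are (i) measurability of the $\arg\min$ selector, which follows from the deterministic tie-breaking on the finite set $\mathcal{F}_{\{0,1\}^K}$, and (ii) the consistent interpretation of the restricted minimax risk, which is standard in any Assouad- or Fano-style reduction to a finite hypothesis family.
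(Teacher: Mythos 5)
Your proposal is correct and follows essentially the same route as the paper: construct the nearest-point projection of $\bar{f}$ onto the finite family $\mathcal{F}_{\{0,1\}^K}$, apply the triangle inequality, and use the minimizing property of the projection to bound $d_{\TV}(\tilde{f},\bar{f})\le d_{\TV}(\bar{f},f^*)$. You are in fact slightly more careful than the paper at the step where that minimizing property is invoked: it requires $f^*\in\mathcal{F}_{\{0,1\}^K}$, so the risk functionals on both sides of the lemma must be read as suprema over the finite separated subfamily rather than over all of $\Fcal_d$ (the paper writes $\sup_{f\in\mathcal{F}_d}$ but the argument only goes through under this restricted interpretation, which is also the only one used downstream in Section~\ref{sec:TV_lower}).
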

\begin{proof}
	Choose estimator that minimizes the the risk, denoted by $ \bar{f}.$  Then, project it to  $ \widetilde{\mathcal{F}}_d  $ namely, for every $ n $ samples, we define the projection as follows
	\[
	\bar{f}'(x_1,\ldots,x_n) := \arg\min_{f_{\alpha} \in \widetilde{\mathcal{F}}_d}d_{\TV}(\bar{f}(x_1,\ldots,x_n),f_{\alpha}).
	\]
	Now, we use the triangle inequality
	
	\begin{align*}
	\inf_{\tilde{f} \in \widetilde{\mathcal{F}}_d}\Rcal_{\TV}(\tilde{f},\Fcal_d) &\leq \Rcal_{\TV}(\bar{f}',\Fcal_d) = \sup_{f \in \mathcal{F}_d}\E[d_{\TV}(\bar{f}'(X_1,\ldots,X_n),f)] \\&\leq  \sup_{f \in \mathcal{F}_d}\E[d_{\TV}(\bar{f}'(X_1,\ldots,X_n),\bar{f}(X_1,\ldots,X_n))] + \sup_{f \in \mathcal{F}_d}\E[d_{\TV}(\bar{f}(X_1,\ldots,X_n),f)] 
	\\&\leq  \sup_{f \in \mathcal{F}_d}\E[d_{\TV}(f,\bar{f}(X_1,\ldots,X_n))] + \Rcal_{d_{\TV}}(\bar{f},\Fcal_d)
	\\&\leq 2\Rcal_{d_{\TV}}(\bar{f},\Fcal_d).
	\end{align*}
	where $ X_1,\ldots, X_n$ are drawn i.i.d. $ f \in \mathcal{F}_d$. Thus, the claim follows.
\end{proof}
% \subsection{Loose end in Theorem \ref{Thm:Entropy}, upper bound for the Hellinger $\epsilon$-entropy numbers}\label{looseendThm4}
% For this purpose, we use Lemma \ref{lem:bro}, to find an $0.5$-approximation (with bracketing) with respect to the Hausdorff distance applied on the following set
% \[
% \{ (x,y): x \in \tilde{D}(f_0),4\log(\epsilon) \leq y \leq \log(f_0) \}
% \]
% Using these set we can generate $\log(\tilde{\tilde{f_0}})$ such thatthat it is bounded from above by $2C(d)$.  Since both $\widehat{\log(f_0)}$ and $\widetilde{\log(f_0)}$ are greater than $\log(f_0)$ on $\tilde{D}(f_0)$, we may assume  that $\widehat{\log(f_0)}$ is the minimum of these two functions and hence bounded from above by $C(d)$.
\subsection{Extending to an arbitrary log-concave distribution}\label{Extsub}
The following argument is relatively standard, the idea is to apply an affine transformation on the last $n/3$ samples such that they will be almost isotropic (``standardization"). Then, we use the estimator that we constructed in the previous section on these samples. Using the fact that total variation is affine-invariant, we will apply the inverse transformation (reverse the ''standardization") on the distribution that our estimator gave us. In this way, we generalize to an arbitrary log-concave distribution.

Denote by $Y = \Sigma^{0.5}X + b$ where $X$ is an isotropic log-concave vector. Then, we estimate the empirical covariance matrix $\hat{\Sigma}$ by the first $n/3$ samples. Then, we multiply our next $n/3$ samples by $\hat{\Sigma}^{-0.5}$. Clearly, each of these samples  $\hat{\Sigma}^{-0.5}Y_i$  is distributed as follows:
\[
\hat{\Sigma}^{-0.5}Y =  \Sigma^{-(1/4)}S^{-0.5}\Sigma^{-(1/4)}\Sigma^{1/2}X +\hat{\Sigma}^{-0.5}b =  S^{-0.5}X + \hat{\Sigma}^{-0.5}b,
\] 
where $S$ is the empirical covariance matrix of $n/3$ i.i.d. samples from $X$.
Using standard results, see for example \cite{kim2016global}, with overwhelming probability $\|S^{-0.5}-I\| \leq O_d(n^{-0.5})$. Now we know that $ \hat{\Sigma}^{-0.5}Y $ has a  bounded covariance matrix, namely its trace bounded by $2d$. Thus, we use the next $n/3$ samples in order to estimate the mean of $\hat{\Sigma}^{-0.5}Y$ which is $\hat{\Sigma}^{-0.5}b$. Thus, we can find an estimator of the mean that satisfies 
\[
\|\widehat{(\hat{\Sigma}^{-0.5}b)} - (\hat{\Sigma}^{-0.5}b)\|_2 \leq O_d(n^{-0.5}).
\]
Finally, we use the last $n/3$ samples of $Y$ and apply them the affine transformation $\hat{\Sigma}^{-0.5}Y-\widehat{(\hat{\Sigma}^{-0.5}b)}$. Thus, these last $n/3$ samples are i.i.d and almost isotropic. 

Now, we may use our estimator on these samples, and find an $O_d(\log(n)^{\frac{d}{d+4}}n^{-\frac{2}{d+4}})$-approximated distribution to $\hat{\Sigma}^{-0.5}Y -\widehat{(\hat{\Sigma}^{-0.5}b)}$. Now, using the fact that the total variation is invariant to invertible affine transformations. We  reverse this affine transformation on the approximated distribution, and get an $O_d(\log(n)^{\frac{d(d-1)}{d+4}})n^{-\frac{2}{d+4}})$-approximated distribution to the one of $Y$. 

\subsection{An alternative proof of Lemma \ref{cor:Gen}}\label{Sup:cor:Gen}
First, by adding some constant we can assume that the functions $f,g,f+g$ are positive and bounded by $2\Gamma$. Then, decompose $(f-g)^2$ as
\begin{equation}\label{key1}
(f-g)^2 = 2f^2+ 2g^2 -(f+g)^2.
\end{equation}
Clearly, each of the three terms is a convex function. To see this, note that $h^2$ is convex for for every positive convex function $h : \R^d \to \R$.
Indeed, convexity means
\[
h(\lambda x + (1-\lambda)y) \le \lambda h(x) + (1-\lambda) h(y).
\]
Since both sides of the above inequality are positive and from convexity of $y \mapsto y^2$, we obtain that
\[
h^2(\lambda x + (1-\lambda)y) 
\leq (\lambda h(x) + (1-\lambda)h(y))^2 \leq \lambda h^2(x) + (1-\lambda)h^2(y).
\]
Now we use the tail formula: namely for any positive random variable $Z$ and  increasing function $\varphi \colon [0,\infty) \to \mathbb{R}$ such that $\varphi(0) = 0$, the following holds:
\[
\E_Z [\varphi(Z)] = \int_{0}^\infty \frac{d}{dt} \varphi(t) \Pr[Z \ge t] dt.
\]
Applying the above formula on $Z = h(x)$ and $\varphi(Z) = Z^2$, and for the log-concave $\PP$, 
\[
\int_{\R^d} h^2(x) \PP(dx)= \int_{0}^{\infty}2t\PP(\{x \in \R^d: h(x) \geq t\})dt.
\]
Moreover, if we further assume that $h$ is also bounded by $2\Gamma$, 
\begin{equation}
\begin{aligned}
\sup_{h\in \Hcal_{d,\Gamma}}|\PP_{n}(h^2) -\PP(h^2)| &= \sup_{h\in \Hcal_{d,\Gamma}}|\int_{0}^{2\Gamma}2t(\PP(h \geq t) - \PP_n(h \geq t)) dt| \\
&\le\sup_{h\in \Hcal_{d,\Gamma}}\int_{0}^{2\Gamma}2t|(\PP(h \geq t) - \PP_n(h \geq t))| dt \\
&\le 2 \Gamma \sup_{h\in \Hcal_{d,\Gamma}}\int_{0}^{2\Gamma}|(\PP(h \geq t) - \PP_n(h \geq t))| dt \\
&= 2 \Gamma \sup_{h\in \Hcal_{d,\Gamma}}\int_{0}^{2\Gamma}|( \PP(h < t) - \PP_n(h < t))| dt \\
&\leq 4 \Gamma^2\sup_{C \in \mathcal{K}_d} \left| \PP_n(C) - \PP(C)\right|,
\end{aligned}
\label{key2}
\end{equation}
where the last inequality follows from the fact that the set $\{x \in \mathbb{R}^d \colon h(x) < t \}$ is convex.
Then, by taking expectation on both sides and Corollary \ref{cor:conv-mcdiarmid} we get
\[
\E \sup_{h\in \Hcal_{d,\Gamma}}|\PP_{n}(h^2) -\PP(h^2)| \leq O_d(\Gamma^2 \cdot n^{-\frac{2}{d+1}}).
\]
Applying the last equation on $h = f,g,f+g$ and recalling that $(f-g)^2 = 2f^2 + 2g^2 - (f+g)^2$, the proof follows.

\bibliographystyle{plainnat}
\bibliography{bib}

\end{document}